\documentclass{article}
\usepackage{amsmath,amssymb,amsthm,tikz-cd,bm,mathrsfs,hyperref}
\usepackage{dutchcal}
\usepackage[shortlabels]{enumitem}
\usepackage{graphicx}
\usepackage[margin=1in]{geometry}

\usepackage{indentfirst}

\hypersetup{
	colorlinks=true,
	linkcolor=blue,
	filecolor=magenta,
	urlcolor=cyan,
	citecolor=blue,
}

\theoremstyle{theorem}
\newtheorem*{theorem*}{Theorem}
\newtheorem*{corollary*}{Corollary}
\newtheorem*{thmdef*}{Theorem/Definition}

\theoremstyle{definition}
\newtheorem{theorem}{Theorem}[section]
\newtheorem{corollary}[theorem]{Corollary}
\newtheorem{lemma}[theorem]{Lemma}
\newtheorem{remark}[theorem]{Remark}

\newtheorem{example}{Example}
\newtheorem{definition}[theorem]{Definition}
\newtheorem{proposition}[theorem]{Proposition}
\newtheorem{Setup}{Setup}
\newtheorem{thmletter}{Theorem}

\setlength\parindent{1em}
\setlength{\parskip}{5pt}


\newcommand{\mc}[1]{\mathcal{#1}}
\newcommand{\mr}[1]{\mathrm{#1}}
\newcommand{\mf}[1]{\mathfrak{#1}}
\newcommand{\mb}[1]{\mathbb{#1}}

\newcommand{\Z}[1]{\texttt{#1}}

\DeclareMathOperator{\knr}{Ker\,}
\DeclareMathOperator{\im}{Im\,}
\newcommand{\ann}[1]{\mathrm{ann}_{#1}}

\newcommand{\cc}{\mathbb{C}}
\newcommand{\nt}{\mathbb{N}}
\newcommand{\nz}{\mathbb{N}^+}
\newcommand{\kk}{\mathbb{K}}
\newcommand{\z}{\mathbb{Z}}
\newcommand{\Q}{\mathbb{Q}}
\newcommand{\zp}{\mathbb{Z}_p}
\newcommand{\fp}{\mathbb{F}_p}
\newcommand{\bv}[1]{\mathbb{V}(#1)}
\newcommand{\nm}[1]{\Vert #1\Vert}
\newcommand{\wno}[1]{|#1|_{\B{w}}}

\newcommand{\ideala}[1]{\langle{#1}\rangle}
\newcommand{\tre}[1]{\langle{#1}\rangle_e}
\newcommand{\dit}[1]{^{(#1)}}

\newcommand{\dpot}[1]{\partial^{[#1]}_{\mathbf{t}}}
\newcommand{\dpox}[1]{\partial^{[#1]}_{\mathbf{x}}}

\newcommand{\End}[1]{\mathrm{End}_{#1}}

\newcommand{\B}[1]{\mathbf{#1}}
\newcommand{\Y}[1]{\bm{#1}}


\newcommand{\nJI}[1]{\nu^{\mf{a}}_I(p^{#1})}
\newcommand{\ninv}[1]{\nu^\bullet_{#1}}
\newcommand{\pow}[1]{p^{#1}}
\newcommand{\ppm}[1]{^{[p^{#1}]}}
\newcommand{\ppdm}[1]{^{[\frac{1}{p^{#1}}]}}
\newcommand{\cpt}[1]{\textbf{c}^{#1}}
\newcommand{\wmd}[1]{\mr{deg}_{\B{w}}(#1)}
\newcommand{\wxd}[1]{\mr{Deg}_{\B{w}}(#1)}
\newcommand{\wtd}[1]{\mr{wt}\,(#1)}
\newcommand{\Tupp}[1]{\mr{Supp}\,(#1)}
\newcommand{\jac}[1]{\mr{Jac}\,(#1)}

\newcommand{\eul}[1]{\vartheta_{#1}}

\newcommand{\dlr}{D^{(e)}_R}
\newcommand{\dle}[1]{\textbf{D}^e_{#1}}

\DeclareMathOperator{\spec}{Spec\,}
\DeclareMathOperator{\spmc}{Spec_{max}\,}
\newcommand{\bcap}[1]{\bigcap_{#1}}
\newcommand{\bcup}[1]{\bigcup_{#1}}
\newcommand{\bops}[1]{\bigoplus_{#1}}
\newcommand{\ceil}[1]{\lceil #1 \rceil}
\newcommand{\ott}[1]{\otimes_{#1}}

\newcommand{\ssm}[1]{\sum_{#1}}
\newcommand{\tld}[1]{\widetilde{#1}}
\newcommand{\Lxf}{\bm{f_1^{s_1}\cdots f_r^{s_r}}}
\newcommand{\bxf}{\bm{f^s}}

\newcommand{\ud}[1]{_{#1}}
\newcommand{\kud}[1]{_{(#1)}}

\newcommand{\cut}[1]{\textbf{C}(\zp,#1)}
\newcommand{\rcut}[1]{\textbf{C}(\zp^r,#1)}
\newcommand{\ceut}[1]{\textbf{C}^e(\zp,#1)}
\newcommand{\rceut}[1]{\textbf{C}^e(\zp^r,#1)}

\newcommand{\mustata}{Musta\c{t}\u{a}\,}
\newcommand{\QUG}{Quinlan-Gallego\,}

\newcommand{\Nft}{$F$-thresholds\,}
\newcommand{\Nbs}{Bernstein-Sato\,}
\newcommand{\Ndpo}{differential operator\,}
\newcommand{\Ndsp}{differential operators\,}
\newcommand{\Nwh}{weighted homogeneous\,}

\usepackage[hang,flushmargin]{footmisc}
\usepackage{lettrine}

\setlist[enumerate]{itemsep=.5pt, topsep=1pt}
\setlist[itemize]{itemsep=.5pt, topsep=1pt}

\begin{document}
	\title{Bernstein-Sato functional equations for ideals in positive characteristic}
	\author{Siyong Tao, Zida Xiao, and Huaiqing Zuo}
	\date{}
	\maketitle
	\vspace{-2.5em}
	
	\begin{abstract}
		For an ideal of a regular $\cc$-algebra, its Bernstein-Sato polynomial is the monic polynomial of the lowest degree satisfying an Bernstein-Sato functional equation. We generalize the notion of Bernstein-Sato functional equations to the case of ideals in an $F$-finite ring of positive characteristic $p$, and show the relationship between these equations and Bernstein-Sato roots.
		By applying this theory, we provide an explicit description of Bernstein-Sato roots of a weighted homogeneous polynomial with an isolated singularity at the origin in characteristic $p$. Moreover, we give multiplicative and additive Thom-Sebastiani properties for the set of Bernstein-Sato roots, which prove the characteristic $p$ analogue of Budur and Popa's question.
		
		Keywords.  Bernstein-Sato functional equations, Bernstein-Sato roots, $F$-threhsolds.
		
		MSC(2020). 14B05, 32S05.
	\end{abstract}
	
	\section{Introduction}\label{sec:intro}
	Let $f\in A=\cc[x_1,\dots,x_n]$ be a non-constant polynomial in $n$ variables. We define the free rank one $A_f[s]$-\\module $A_f[s]\bxf$, where $\bxf$ is the formal symbol for the generator. Let $D_{A_f/\cc}$ be the ring of $\cc$-linear differen-\\tial operators on $A_f$ (see Definition \ref{def:diff oper}). $D_{A_f/\cc}[s]$-module structure on $A_f[s]\bxf$ is given by the rule
	\begin{equation*}
		\partial\cdot a(s)\bxf=\partial(a(s))\bxf+\frac{sa(s)\partial(f)}{f}\bxf
	\end{equation*}
	for any $\cc$-linear derivation $\partial$ on $A_f$ and $a(s)\in A_f[s]$.
	\Nbs polynomial of $f$, we denote it by $b_f(s)$, is the minimal monic generator of the ideal $\{b(s)\in\cc[s]:b(s)\bxf\in D_A[s]\cdot f\bxf\}\subseteq \cc[s]$.
	Here $D_{A/\cc}[s]$ can be viewed as a $\cc$-subalgebra of $D_{A_f/\cc}[s]$ (see Lemma \ref{lem:local of diff}). For any $P(s)\in D_{A/\cc}[s]$, we call $b(s)\bxf=P(s)\cdot f\bxf$ a \Nbs functional equation for $f$. Clearly, we have
	\begin{equation*}
		\ideala{b_f(s)}=\ann{\cc[s]}\,\frac{D_A[s]\cdot \bxf}{D_A[s]\cdot f\bxf}.
	\end{equation*}
	
	\Nbs polynomial was introduced by Sato and Shintani \cite{SS72}. And its existence was proved by Bernstein \cite{Ber72}. Kashiwara \cite{Kas76} showed that roots of $b_f(s)$ are negative and rational. These roots are important invariants of the hypersurface singularity in $\cc^n$ defined by $f$. They are known to be closely related to the log-canonical threshold of $(\cc^n,V(f))$ \cite{Kol97}, the jumping coefficients of $(\cc^n,V(f))$ \cite{ELSV04}, and the eigenvalues of the monodromy action on the cohomology of Milnor fiber of $f$ \cite{Mal83}.

	Budur, \mustata and Saito \cite{Bur06} generalized the notion of \Nbs polynomials to the case of a complex subvariety $Z$ embedded in a smooth affine variety $X=\spec A$. Let $I=\ideala{f_1,\dots,f_r}\subseteq A=\mc{O}_X(X)$ be the vanishing ideal of $Z$. Denote $A_{f_1\cdots f_r}$ by $A_{\B{f}}$. Given indeterminates $\B{s}=s_1,\dots,s_r$, we still define the free rank one $A_{\B{f}}[\B{s}]$-module $A_{\B{f}}[\B{s}]\Lxf$ with the formal symbol generator $\Lxf$. $D_{A_{\B{f}}/\cc}[\B{s}]$-module structure on $A_{\B{f}}[\B{s}]\Lxf$ is given by the rule
	\begin{equation*}
		\partial\cdot a(\B{s})\Lxf=\partial(a(\B{s}))\Lxf+\sum^r_{i=1}\frac{s_ia(\B{s})\partial(f_i)}{f_i}\Lxf,
	\end{equation*}
	for all $\cc$-linear derivation $\partial$ on $A_{\B{f}}$ and $a(\B{s})\in A_{\B{f}}[\B{s}]$.
	\Nbs polynomial of $I$, denoted by $b_I(s)\in\cc[s]$, is defined to be the monic polynomial of smallest degree satisfying the \Nbs functional equation
	\begin{equation}\label{eq:BS-equ in cpx}
		b(s_1+\cdots+s_r)\Lxf=\ssm{\B{c}=(c_1,\dots,c_r)\in\z^r,\sum^r_{i=1}c_i=1}P_{\B{c}}(\B{s})\cdot\prod\ud{c_i<0}\binom{s_i}{-c_i}\prod^r_{i=1}f_i^{c_i}\Lxf,
	\end{equation}
	where $P_{\B{c}}(\B{s})\in D_{A/\cc}[\B{s}]$. $b_I(s)$ is independent of the choice of generators of $I$. 
	\begin{theorem}[\text{[\textcolor{blue}{BMS06}, Theorem 2]}]\label{thm:bs roots and jumping in czero}
		The smallest root $\alpha\ud{I}$ of $b_I(-s)$ concides with the log-canonical threshold of $(X,Z)$, and jumping coefficients of $(X,Z)$ in $[\alpha\ud{I},\alpha\ud{I}+1)$ are roots of $b_I(-s)$.
	\end{theorem}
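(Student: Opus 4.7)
The plan is to express both the Bernstein-Sato roots and the jumping coefficients as data extracted from one common object: the V-filtration on the $D$-module direct image under the graph embedding. Let $i_{\B{f}}: X \to X \times \mb{A}^r$ send $x \mapsto (x, f_1(x), \ldots, f_r(x))$, write $t_1, \ldots, t_r$ for the coordinates on $\mb{A}^r$, and let $B_{\B{f}} = (i_{\B{f}})_+ \mc{O}_X$ be the associated $D_{X \times \mb{A}^r}$-module, equipped with its Kashiwara-Malgrange V-filtration $V^\bullet B_{\B{f}}$ along $X \times \{0\}$. A distinguished generator $\delta \in B_{\B{f}}$ plays the role of $\Lxf$, on which each $-\partial_{t_i} t_i$ acts as $s_i + 1$.

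I would first translate (\ref{eq:BS-equ in cpx}) into a V-filtration statement. The summation convention over tuples $\B{c} \in \z^r$ with $\sum c_i = 1$, combined with the binomial correction terms, is precisely what forces the right-hand side to lie in $V^{>0} B_{\B{f}}$ after reinterpretation in the direct image $D$-module. Consequently, $b_I(s)$ (up to the shift $s \mapsto s + r$ forced by the $\sum c_i = 1$ condition) is the minimal polynomial of $\theta = \sum_{i=1}^{r} \partial_{t_i} t_i$ acting on the relevant window of $\mr{gr}^\bullet_V B_{\B{f}}$. Next I would invoke the Budur-Saito comparison between the V-filtration and multiplier ideals: after the appropriate normalization, $\mc{J}(X, \alpha \cdot Z)$ equals the set of $g \in \mc{O}_X$ with $g \delta \in V^{\alpha + r} B_{\B{f}}$. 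A jumping coefficient of $(X, Z)$ is therefore a value of $\alpha$ where the graded piece $\mr{gr}^{\alpha + r}_V B_{\B{f}}$ contributes nontrivially to the $\mc{O}_X \cdot \delta$ summand, and in particular produces an eigenvalue of $\theta$.

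Combining the two identifications, the smallest jumping coefficient --- which is $\mr{lct}(X, Z)$ by definition --- must coincide with the smallest root of $b_I(-s)$, and any jumping coefficient $c \in [\alpha\ud{I}, \alpha\ud{I} + 1)$ appears as a root because within a window of length $1$ the eigenvalues of $\theta$ cannot be absorbed by shifts from higher graded pieces before reaching the one that witnesses $c$. The main obstacle is the first step: making precise how the binomial-coefficient-corrected sum in (\ref{eq:BS-equ in cpx}) matches the V-filtration ordering, and confirming that $b_I(s)$ vanishes on \emph{exactly} the $\alpha$ where the relevant graded piece of $B_{\B{f}}$ is nonzero rather than some a priori smaller set. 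Resolving this requires a careful choice of generator $\delta$ and detailed bookkeeping of the $D_{A/\cc}[\B{s}]$-module structure on $A_{\B{f}}[\B{s}]\Lxf$ in terms of the operators $\partial_{t_i}$ and $t_i$ on $B_{\B{f}}$.
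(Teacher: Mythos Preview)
The paper does not prove this theorem. It is stated in the introduction as a citation of a known result from Budur--Musta\c{t}\u{a}--Saito (the reference \textbf{[BMS06, Theorem~2]} appears in the theorem header itself), and no proof is supplied; the statement serves only as background motivation for the positive-characteristic analogues developed later.

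Your sketch is a faithful outline of the original argument in \cite{Bur06}: reinterpret the functional equation \eqref{eq:BS-equ in cpx} via the graph embedding, identify $b_I(s)$ with the minimal polynomial of the Euler-type operator on a graded piece of the $V$-filtration, and then invoke the comparison of the $V$-filtration with multiplier ideals. The paragraphs immediately following the theorem in the paper summarize exactly this picture (the module $N_I$ in \eqref{eq:NF in comp} and the isomorphism $\Theta_I$), but only as exposition, not as a proof. So there is no ``paper's own proof'' to compare against; if anything, your proposal expands on what the paper merely gestures at.
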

	
	Theory of $V$-filtration developed by Kashiwara \cite{Kas83} and Malgrange \cite{Mal83} is closely related to Bern-\\stein-Sato theory. It can be applied to show the existence of quite general \Nbs polynomials. 
	
	We keep the notation introduced above. Let $i_\mc{F}:X\to X\times\ud{\cc}\mb{A}^r_{\cc}$ be the graph embedding of $\mc{F}=\{f_i\}_{i=1}^r$. $i_{\mc{F}}$ is given by the homomorphism $A[\B{t}]=A[t_1,\dots,t_r]\to A, t_i\mapsto f_i,1\leq i\leq r$. Direct image $(i_{\mc{F}})_+A$ of the $D_{A/\cc}$-module $A$, as a $D_{A[\B{t}]/\cc}$-module, is the local cohomology module
	\begin{equation}\label{eq:local coho}
		H_I=H^r_{\ideala{f_i-t_i:1\leq i\leq r}}A[\B{t}]=\frac{A[\B{t}]_{\prod^r_{i=1}(f_i-t_i)}}{\im(\bigoplus^r_{i=1}A[\B{t}]_{\prod\ud{j\neq i}(f_j-t_j)}\to A[\B{t}]_{\prod^r_{i=1}(f_i-t_i)})}.
	\end{equation}
	The $V$-filtration along the ideal $\ideala{\B{t}}=\ideala{t_1,\dots,t_r}$ of $A[\B{t}]$ on $D_{A[\B{t}]/\cc}$ is the filtration indexed by integers $i\in\z$ defined by $V^iD_{A[\B{t}]/\cc}=\{\varphi\in D_{A[\B{t}]/\cc}:\varphi\cdot \ideala{\B{t}}^j\subseteq \ideala{\B{t}}^{j+i}\;\text{for all}\;j\in\z\}$, where we let $\ideala{\B{t}}^j=A[\B{t}]$ for all $j\leq 0$. $b_I(s)$ is the minimal polynomial of the action of $\vartheta=-\sum^r_{i=1}\partial\ud{t_i}t_i$ on
	\begin{equation}\label{eq:NF in comp}
		N_I=\frac{V^0D_{A[\B{t}]/\cc}\cdot\delta}{V^1D_{A[\B{t}]/\cc}\cdot\delta},\quad\text{where}\; \delta=\left[\frac{1}{\prod^r_{i=1}(f_i-t_i)}\right]\in H_I.
	\end{equation}
	We refer readers to \cite{Bur06} and \cite{Mon21} for more details. Assume that the set of roots of $b_I(s)$ without counting multiplicities is $\{\lambda\ud{1},\dots,\lambda\ud{l}\}$, then $N_I$ decomposes as a finite direct sum $N_I=\bigoplus^n_{i=1}(N_I)_{\lambda\ud{i}}$, where $(N_I)_{\lambda\ud{i}}$ is the $\lambda\ud{i}$-generalized eigenspace of $\vartheta$.
	
	We now link \Nbs functional equations and the minimal polynomial of the action of $-\vartheta$ on $N_I$. The general philosophy is as follows. Suppose that all $f_i$ are non-zero divisors of $R$. $H'_I=H^r_{\ideala{f_i-t_i:1\leq i\leq r}}A_{\B{f}}[\B{t}]$ admits a $D_{A_{\B{f}}/\cc}[\B{s}]$-module structure. $H_I$ can be seen as a $D_{A/\cc}[\B{s}]$-submodule of $H'_I$. There exists an $D_{A_{\B{f}}/\cc}[s]$-module isomorphism
	\begin{equation*}
		\Theta\ud{I}:A_{\B{f}}[\B{s}]\Lxf\mapsto H'_I,\quad s_i\Lxf\mapsto -\partial\ud{t_i}t_i(\delta).
	\end{equation*}
	The inverse image of $V^1D_{A[\B{t}]/\cc}\cdot \delta\subseteq H'_I$ under $\Theta\ud{I}$ is exactly the $D_{A/\cc}[\B{s}]$-module
	\begin{equation*}
		\ssm{(c_1,\dots,c_r)\in \z^r:\sum^r_{j=1}c_j=1}D_{A/\cc}[\B{s}]\cdot\prod\ud{c_i<0}\binom{s_i}{-c_i}\prod^r_{i=1}f_i^{c_i}\Lxf.
	\end{equation*}
	This yields that $b(s)\in\cc[s]$ satisfies a \Nbs functional equation \eqref{eq:BS-equ in cpx} for $I$ if and only if the action of $b(\vartheta)$ on $N_I$ is zero. 
	
	Notation of \Nbs functional equation in characteristic zero is not suitable for consideration about an element $f$ in a regular $\kk$-algebra $R$ with $\kk$ a perfect field of characteristic $p>0$, since $D_{R_f/\kk}$ can not be generated by derivations on $R_f$. It is hard to define a $D_{R_f/\kk}[s]$-module structure on $R_f[s]\bxf$. Moreover, generalized test ideals, (see Definition \ref{def:test ideals}), are related to multiplier ideals (see \cite{Laz04}) via reduction modulo $p$. Naturally, $F$-jumping exponents (see Definition \ref{def:Fjp}) are characteristic-$p$ analogues of jumping coefficients.
	Bernstein-Sato polynomial over $\cc$ carries information about jumping coefficients.
	\Nbs theory in positive characteristic has been developed by \mustata \cite{Mus09}, Bitoun \cite{Bit18} and \QUG \cite{Qge21}. They established the connection between ``\Nbs roots" and $F$-jumping exponents. We begin by introducing some basic notation.
	
	\begin{Setup}\label{Setup:into}
		Let $R$ be an $F$-finite regular ring of characteristic $p>0$ (See Section \ref{subsec:diff}), $I$ a non-zero ideal of $R$ with a system of generators $\mc{F}=\{f_i\}_{i=1}^r$. $R[\B{t}]=R[t_1,\dots,t_r]$. Let $D_{R[\B{t}]}$ be the ring of $\fp$-linear differential operators on $R[\B{t}]$ with the $V$-filtration $\{V^iD_{R[\B{t}]}=\{\varphi\in D_{R[\B{t}]}:\varphi\cdot\ideala{\B{t}}^j\subseteq \ideala{\B{t}}^{j+i}\;\text{for all}\;j\in\z\}\}_{i\in\z}$ along the ideal $\ideala{\B{t}}=\ideala{t_1,\dots,t_r}$. $H_{\mc{F}}$ is the local cohomology module $H^r_{\ideala{f_i-t_i:1\leq i\leq r}}R[\B{t}]$. Consider the $R[\B{t}]$-module
		\begin{equation*}
			N_{\mc{F}}=\frac{V^0D_{R[\B{t}]}\cdot\delta}{V^1D_{R[\B{t}]}\cdot\delta},\quad \delta=\left[\frac{1}{\prod^r_{i=1}(f_i-t_i)}\right]\in H_{\mc{F}}.
		\end{equation*}
	\end{Setup}
	
	\mustata observed that higher Euler operators $\{\eul{\B{t},p^i}\}_{i\in\nt}$ (see Section \ref{subsec:Higher Euler}) act on $N_{\mc{F}}$. And $N_\mc{F}$ splits as a direct sum of multi-eigenspaces.
	\QUG generalized the work of \mustata and Bitoun, and defined \Nbs roots with values in $\zp$ for $I$. A $p$-adic integer $\mc{x}=\ssm{i\in\nt}\mc{x}^{[i]}p^i\in\zp$ is a Bernstein-Sato root of $I$ if and only if the $\mc{x}$-multi-eigenspace 
	\begin{equation*}
		(N_{\mc{F}})\kud{\mc{x}}=\{u\in N_{\mc{F}}:\eul{\B{t},p^i}\cdot u=\mc{x}^{[i]}u\;\text{for all}\;i\in\nt\}\neq 0.
	\end{equation*}
	\QUG gave a characteristic $p$ analogue of Kashiwara’s rationality theorem, which states that every \Nbs root of $I$ is negative and rational (see Theorem \ref{thm:ration}).
	
	Jeffries, N\'{u}\~{n}ez-Betancourt and \QUG \cite{JNQ23} extended \Nbs theory in positive characteristic to singular rings. Let $T$ be an $\fp$-algebra. They introduced the $T$-algebra of continuous maps from the ring of $p$-adic integers $\zp$ to $T$. We denote it by $\cut{T}$. For any principal ideal $\ideala{f}$ of $R$ with $f$ a non-zero divisor. Jeffries and co-authors constructed the free rank one $\cut{R_f}$-module $\cut{R_f}\bxf$. It corresponds to the module $A_f[s]\bxf$ in the complex case. Both $\cut{R_f}\bxf$ and $H'_f=R_f[t]_{f-t}/R_f[t]$ admit  $\cut{D_{R_f}}$-module structures. And there is a $\cut{D_{R_f}}$-module isomorphism $\Theta\ud{f}:\cut{R_f}\bxf\to H'_f$. 
	
	Jeffries and co-authors proved that $\mc{x}\in\zp$ is a \Nbs root of $f$ if and only if for every $\Z{F}\in\cut{\fp}$ satisfying $\Z{F}\bxf=\Z{P}\cdot f\bxf$ for some $\Z{P}\in\cut{D_R}$ (We call this the \Nbs functional equation for $f$ in characteristic $p$), there is $\Z{F}(\mc{x})=0$. Motivated by \Nbs functional equations for ideals in complex case, in Section \ref{sec:cont zp}, we first discuss basic properties of $\cut{T}$ for a fixed $\fp$-algebra $T$. We generalize this concept to the case of the $T$-algebra of continuous maps from $\zp^r$ to $T$ for a fixed $r\in\nt$, denoted by $\rcut{T}$. Section \ref{sec:cont zp} is a building block for the satisfactory theory of \Nbs functional equations for ideals in positive characteristic. Keep notation in Setup \ref{Setup:into}. Let $R_{\B{f}}=R_{f_1\cdots f_r}$. In Section \ref{sec: BS theory}, we summarize the work of Jeffries, N\'{u}\~{n}ez-Betancourt and \QUG, and give another discription of the $(D_{R[\B{t}]})_0$-module $N_\mc{F}$. And then we obtain the \Nbs functional equations for ideals in positive characteristic. From these equations we see the relationship between \Nbs roots and \Nbs polynomials.
	
	\begin{thmletter}[Theorem \ref{thm:f1f2--}, Corollary \ref{cor:mthmA}]
		Let	$\rcut{R_{\B{f}}}\Lxf$ be the free rank-one $\rcut{R_{\B{f}}}$-module generated by the formal symbol $\Lxf$, and let $H'_\mc{F}=H^r_{\ideala{f_i-t_i:1\leq i\leq r}}R_{\B{f}}[\B{t}]$. $\rcut{R_{\B{f}}}\Lxf$ and $H'_\mc{F}$ admit $\rcut{D_{R_{\B{f}}}}$-module structures. There is a $\rcut{D_{R_{\B{f}}}}$-module isomorphism
		\begin{equation*}
			\Theta\ud{\mc{F}}:\rcut{R_{\B{f}}}\Lxf\cong H'_{\mc{F}}.
		\end{equation*}
		Suppose that all $f_i$ are non-zero divisors of $R$. $H_\mc{F}$ can be regarded as a $\rcut{D_R}$-submodule of $H'_\mc{F}$. With the identification above, $N_{\mc{F}}$ is isomorphic to the $\rcut{D_R}$-module
		\begin{equation*}
			\frac{\ssm{\alpha\in\textbf{Fun}_r,\nm{\alpha}=0}\rcut{D_R}\Z{Bd}^{[\alpha]}\B{f}^{[\alpha]}\Lxf}{\ssm{\alpha\in\textbf{Fun}_r,\nm{\alpha}=1}\rcut{D_R}\Z{Bd}^{[\alpha]}\B{f}^{[\alpha]}\Lxf}.
		\end{equation*}
		We will give precise definitions of $\textbf{Fun}_r$, $\Z{Bd}^{[\alpha]}$ and $\B{f}^{[\alpha]}$ in Section \ref{subsec:Direct}. 
		
		For a continuous function $\Z{F}\in\cut{\fp}$, we say that $\Z{F}$ satisfies a \Nbs functional equation for $I$ if the following identity holds.
		\begin{equation*}
			\Sigma\ud{r,R}(\Z{F})\Lxf=\ssm{\alpha\in\textbf{Fun}_r,\nm{\alpha}=1}\Z{P}_{\alpha}\cdot\Z{Bd}^{[\alpha]}\B{f}^{[\alpha]}\Lxf.
		\end{equation*}
		Here every $\Z{P}_\alpha\in\rcut{D_R}$, and $\Sigma\ud{r,R}(\Z{F})\in\rcut{\fp}$ sends every $(\mc{x}_1,\dots,\mc{x}_r)\in\zp^r$ to $\Z{F}(\mc{x}_1+\cdots+\mc{x}_r)$. $\mc{x}\in\\\zp$ is a \Nbs root of $I$ if and only if for any $\Z{F}\in\cut{\fp}$ satisfying a \Nbs functional equation for $I$, there is $\Z{F}(\mc{x})=0$.
	\end{thmletter}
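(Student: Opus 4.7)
The plan is to mirror the complex-case derivation of the \Nbs functional equation for ideals, replacing $\cc[\B{s}]$-module structures by $\rcut{\fp}$-module structures and the Euler operators $-\partial_{t_i}t_i$ by the higher-Euler operators $\eul{\B{t},p^i}$ from Section \ref{subsec:Higher Euler}. I would first construct the $\rcut{D_{R_{\B{f}}}}$-module isomorphism $\Theta\ud{\mc{F}}$, generalizing the principal-ideal construction of Jeffries, N\'u\~nez-Betancourt and \QUG. Define the action of $\rcut{R_{\B{f}}}$ on $\rcut{R_{\B{f}}}\Lxf$ by multiplication and extend to $\rcut{D_{R_{\B{f}}}}$ via the continuous analogue of the Leibniz rule, so that each derivation $\partial$ on $R_{\B{f}}$ acts on $\Lxf$ by $\ssm{i}\mc{x}_i\partial(f_i)f_i^{-1}\Lxf$, where $\mc{x}_i$ denotes the $i$-th coordinate function on $\zp^r$. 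Setting $\Theta\ud{\mc{F}}(\Lxf)=\delta\in H'_\mc{F}$ and extending $\rcut{R_{\B{f}}}$-linearly, the key identity is that $-\partial_{t_i}t_i$ acts on $\delta$ exactly as $\mc{x}_i$ acts on $\Lxf$. Bijectivity then follows from the local-cohomology description of $H'_\mc{F}$, together with the fact that the classes $\partial_{\B{t}}^{[\alpha]}\delta$ generate $H'_\mc{F}$ freely over $\rcut{R_{\B{f}}}$ in an appropriate sense.

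Next, I would transfer the identification to $N_\mc{F}$. When all $f_i$ are non-zero divisors, the map $H_\mc{F}\to H'_\mc{F}$ induced by localization at $f_1\cdots f_r$ is injective, so $H_\mc{F}$ embeds as a $\rcut{D_R}$-submodule. To describe the preimages $\Theta\ud{\mc{F}}^{-1}(V^iD_{R[\B{t}]}\cdot\delta)$, I use the standard generation of $V^0D_{R[\B{t}]}$ over $D_R$ by the divided-power $V$-filtered operators in the $\B{t}$-variables. Applying such operators to $\delta$ and pulling back through $\Theta\ud{\mc{F}}$ yields elements of the form $\Z{Bd}^{[\alpha]}\B{f}^{[\alpha]}\Lxf$, with the weight $\nm{\alpha}$ matching the $V$-filtration index. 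The claimed quotient description of $N_\mc{F}$ is then immediate.

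The \Nbs functional-equation characterization is a consequence of the multi-eigenspace decomposition of $N_\mc{F}$ under $\{\eul{\B{t},p^i}\}_{i\in\nt}$. A continuous function $\Z{F}\in\cut{\fp}$ satisfies a \Nbs functional equation for $I$ iff $\Sigma\ud{r,R}(\Z{F})\Lxf$ lies in the submodule generated by the $\nm{\alpha}=1$ summands, which by the preceding step is equivalent to $\Sigma\ud{r,R}(\Z{F})\cdot\delta=0$ in $N_\mc{F}$. On the $\mc{x}$-multi-eigenspace $(N_\mc{F})\kud{\mc{x}}$ the operator $\Sigma\ud{r,R}(\Z{F})$ acts as the scalar $\Z{F}(\mc{x})$ by the very definition of $\Sigma\ud{r,R}$, so $\Sigma\ud{r,R}(\Z{F})$ kills $N_\mc{F}$ iff $\Z{F}$ vanishes on every \Nbs root of $I$, and the equivalence follows.

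The main obstacle is the middle step: pinning down $\Theta\ud{\mc{F}}^{-1}(V^1D_{R[\B{t}]}\cdot\delta)$ precisely. The complex argument rests on the identity $\binom{s_i}{-c_i}\prod_j f_j^{c_j}\Lxf\leftrightarrow t_i^{-c_i}\delta$, which uses that integer binomial polynomials generate $\cc[\B{s}]$; in characteristic $p$ these fail to span $\rcut{\fp}$. The substitute is the $p$-adic continuous binomial functions $\Z{Bd}^{[\alpha]}$ built in Section \ref{sec:cont zp}, and one must verify, level by level in the $V$-filtration, that the divided-power operators $\partial_{\B{t}}^{[\alpha]}\B{t}^{[\alpha]}$ act on $\delta$ via these functions modulo higher filtration. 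This prime-power-by-prime-power combinatorial bookkeeping, carried out in the framework of $\rcut{T}$ developed in Section \ref{sec:cont zp}, is where the bulk of the technical work lies.
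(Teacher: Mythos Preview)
Your overall architecture is right---construct a module structure on $\rcut{R_{\B{f}}}\Lxf$, build the isomorphism $\Theta_{\mc{F}}$, identify the preimages of $(D_{R[\B{t}]})_n\cdot\delta$, and deduce the functional-equation criterion. But the first step, as you describe it, does not work.

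You propose to ``extend to $\rcut{D_{R_{\B{f}}}}$ via the continuous analogue of the Leibniz rule, so that each derivation $\partial$ on $R_{\B{f}}$ acts on $\Lxf$ by $\sum_i \mc{x}_i\,\partial(f_i)f_i^{-1}\Lxf$.'' This is the characteristic-zero recipe, and it fails in characteristic $p$ for two independent reasons. First, $D_{R_{\B{f}}}$ is \emph{not} generated by derivations over $R_{\B{f}}$; the divided-power operators $\partial^{[p^e]}$ are not polynomials in first-order derivations, so a Leibniz rule on derivations does not determine a $D_{R_{\B{f}}}$-action. (The paper's introduction states this explicitly as the obstruction to naively transporting the complex definition.) Second, ``the $i$-th coordinate function $\mc{x}_i$ on $\zp^r$'' is not an element of $\rcut{R_{\B{f}}}$: functions in $\rcut{R_{\B{f}}}$ take values in $R_{\B{f}}$, not in $\zp$, so the expression $\mc{x}_i\,\partial(f_i)f_i^{-1}$ does not even typecheck. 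The same issue undermines your ``key identity'' that $-\partial_{t_i}t_i$ acts on $\delta$ as $\mc{x}_i$ acts on $\Lxf$.

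The paper avoids both problems by working level by level through the filtration $\dle{R_{\B{f}}}$. For $\phi\in\dle{R_{\B{f}}}$ and $u\in R_{\B{f}}$ one sets
\[
\phi\cdot u\Lxf=\Z{V}_{\phi,u}\Lxf,\qquad \Z{V}_{\phi,u}(\mc{x}_1,\dots,\mc{x}_r)=\frac{\phi(u\,f_1^{\tre{\mc{x}_1}}\cdots f_r^{\tre{\mc{x}_r}})}{f_1^{\tre{\mc{x}_1}}\cdots f_r^{\tre{\mc{x}_r}}},
\]
using $e$-th truncations of the $p$-adic arguments; this lands in $\rceut{R_{\B{f}}}$ because $\phi$ is $R_{\B{f}}^{p^e}$-linear. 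Well-definedness and the module axioms are checked via the exponent-specialization maps $E_{\B{a}}$ and density of $\z^r$ in $\zp^r$. The isomorphism $\Theta_{\mc{F}}$ is then defined by $\Z{U}\Lxf\mapsto\Delta_{r,R_{\B{f}}}(\Z{U})(\delta')$, and its $D_{R_{\B{f}}}$-linearity is verified by an explicit computation in the bases $\{\Z{L}^e_{\B{k}}\}$ of $\rceut{\fp}$ and $\{Q_{e,\B{k}}=\B{t}^{(p^e-1)\B{1}_r-\B{k}}\delta'_{p^e\B{1}_r}\}$ of $(H'_{\mc{F}})^e$; bijectivity follows because $\Theta_{\mc{F}}$ carries one basis to (units times) the other, level by level.

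Your treatment of the $V$-filtration step is on the right track but vague. The paper proves a concrete lemma: $(D_{R[\B{t}]})_n$ is generated as a $\rcut{D_R}$-module by the operators $\mc{Dp}^{[\alpha]}$ with $\nm{\alpha}=n$, by factoring an arbitrary $\dpot{\B{a}}\B{t}^{\B{b}}$ digit-by-digit in base $p$. One then checks directly that $\mc{Dp}^{[\alpha]}\cdot\delta'=\Z{Bd}^{[\alpha]}\B{f}^{[\alpha]}\cdot\delta'$, which immediately gives the preimage of $(D_{R[\B{t}]})_n\cdot\delta$ under $\Theta_{\mc{F}}$. No ``level-by-level modulo higher filtration'' bookkeeping is needed. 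Your final step via the eigenspace decomposition is correct and matches the paper's use of $\textbf{BSR}(I)=\bv{\ann{\cut{\fp}}N_{\mc{F}}}$.
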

	
	Let $\kk$ be a perfect field of characteristic $p$. To formulate \Nbs roots of any $f\in\kk[x_1,\dots,x_n]$, by Theorem \ref{thm:FJ and BSR}, we need to compute $F$-jumping exponents of $f$. Blickle, \mustata and Saito \cite{BMS08} showed that the set of $F$-jumping exponents of $f$ coincides with the set of $F$-thresholds of $f$ (see Definition \ref{def:nu-invar} and Lemma \ref{lem:FJ and Fthe}). $F$-threshold of $f$ with respect to an ideal $\mf{a}$ of $\kk[x_1,\dots,x_n]$ such that $f\in\sqrt{\mf{a}}$ is an important numerical invariant of $f$, denoted by $\cpt{\mf{a}}(f)$. Effective methods for computing general $F$-thresholds are still lacking. Consequently, computation of \Nbs roots of polynomials remains an open problem.
	
	In Section \ref{sec:WH}, we first compute $F$-threshold of a weighted homogeneous polynomial $f$ with respect to the ideal $\ideala{x_1^{a_1},\dots,x_n^{a_n}}$ for a fixed $n$-tuple $\B{a}=(a_1,\dots,a_n)\in(\nz)^n$. Under certain hypotheses, these special $F$-\\thresholds provide precise information about \Nbs roots of $f$. But these hypotheses is too strict, so for general $f$, we try to find enough $\Z{F}\in\cut{\fp}$ satisfying the \Nbs functional equations for $f$, the expression for common zeros of these $\Z{F}$ can then be determined.
	
	\begin{thmletter}[Theorem \ref{thm:BSR of spec f}, Corollary \ref{cor:thmB}, Theorem \ref{thm:BSfun of WH}]
		Let $f\in\kk[x_1,\dots,x_n]$ be a weighted homogeneous polynomial of degree $d$ with respect to the weight vector $\B{w}=(w_1,\dots,w_n)\in(\nz)^n$. Suppose that
		\begin{equation*}
			\jac{f}=\ideala{\frac{\partial f}{\partial x_1},\dots,\frac{\partial f}{\partial x_n}}\;\text{is}\;\ideala{x_1,\dots,x_n}\text{-primary}.
		\end{equation*}
		If $\lambda$ is a \Nbs root of $f$, then either $\lambda=-\frac{a}{b}\in(0,1)$ with $a,b\in\nz,p\nmid b$ and $p\mid b-a$, or $d\lambda\in\z$, and there exists a monomial $x_1^{u_1}\cdots x_n^{u_n}\notin \jac{f}$ such that $d\lambda\equiv \sum^r_{i=1}w_i(u_i+1)\bmod p$.  
		
		Furthermore, if for any integer $e\in\nt$ and $0\leq m\leq p^e-1$, the ideal $\ideala{f^m}\ppdm{e}$ (see Section \ref{subsec:Fthres}) is a monomial ideal, then the set of \Nbs roots of $f$ is given by
		\begin{equation*}
			\{-1\}\cup\{-\frac{\sum^r_{i=1}w_ia_i}{d}:(a_1,\dots,a_r)\in(\nz)^n,\cpt{\ideala{x_1^{a_1},\dots,x_n^{a_n}}}(f)=\frac{\sum^r_{i=1}w_ia_i}{d}\in(0,1)\cap\z_{(p)}\}.
		\end{equation*}
	\end{thmletter}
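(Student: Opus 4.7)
The argument combines Theorem A (the functional-equation characterization of \Nbs roots) with the weighted Euler identity $\sum_i w_i x_i \partial_{x_i}(f)=df$ for the first part, and with the $F$-threshold machinery of Section \ref{subsec:Fthres} for the moreover part. I would carry it out in three main steps.

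First, I construct a family of functional equations from the Euler identity. A direct computation from the $\cut{D_R}$-module structure on $\cut{R_f}\bxf$ gives $E\cdot\bxf=sd\,\bxf$ and $\partial_{x_j}\cdot f\bxf=(s+1)f_{x_j}\bxf$, where $E=\sum_i w_i x_i\partial_{x_i}$ and $s\in\cut{\fp}$ denotes the identity function. For any monomial $x^v\in\jac{f}$ with expansion $x^v=\sum_j h_j(x) f_{x_j}$ in $R$, this yields the basic relation $(s+1)\,x^v\bxf=\bigl(\sum_j h_j\partial_{x_j}\bigr)\cdot f\bxf\in\cut{D_R}\cdot f\bxf$. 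The $\ideala{x_1,\dots,x_n}$-primary hypothesis provides a finite weighted-homogeneous monomial basis $B$ of $\kk[x]/\jac{f}$; by induction on weighted degree, I would reduce every $x^v\bxf$ with $x^v\notin B$ to a scalar multiple $\Z{F}_v(s)\bxf$ modulo $\cut{D_R}\cdot f\bxf$, with $\Z{F}_v\in\cut{\fp}$. Each inductive step either multiplies by an $(s+1)$-type factor arising from the shift $\partial_{x_j}\cdot f\bxf$ or, via the eigenvalue identity $E\cdot x^u\bxf=(sd+w(u))x^u\bxf$, by a linear factor in $s$ encoding the congruence $d\mc{x}\equiv\sum_i w_i(u_i+1)\pmod p$ for some $x^u\in B$.

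Second, I analyse the common zero locus of the family $\{\Z{F}_v\}$ as continuous functions $\zp\to\fp$. The linear-in-$s$ factors enforce $d\lambda\in\z$ with the congruence of the theorem for some $x^u\in B$, yielding the second alternative. The $(s+1)$-shifts, once divided-power derivatives $\partial^{[p^e]}_{x_j}$ are used to iterate them in characteristic $p$, produce binomial-coefficient-type factors $\binom{s+k}{p^e}$ in $\cut{\fp}$; their continuous zeros on $\zp$, computed via the machinery of Section \ref{sec:cont zp}, are exactly the rational points $\lambda=-a/b$ with $a/b\in(0,1)$, $p\nmid b$, and $p\mid b-a$. Applying Theorem A pins every \Nbs root of $f$ into the stated dichotomy.

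Third, for the moreover part, I would invoke Theorem \ref{thm:FJ and BSR} and Lemma \ref{lem:FJ and Fthe} to reformulate the problem as computing $F$-thresholds $\cpt{\mf{a}}(f)$ for monomial ideals $\mf{a}=\ideala{x_1^{a_1},\dots,x_n^{a_n}}$. Under the hypothesis that every $\ideala{f^m}\ppdm{e}$ is monomial, testing $f^m\in\mf{a}\ppm{e}$ reduces to a combinatorial inequality on exponent vectors, and a weighted-degree comparison identifies the corresponding $F$-threshold with $\sum_i w_i a_i/d$. The first part of the theorem excludes every \Nbs root not of this shape, except the value $-1$ arising from the universal threshold, so the explicit description follows.

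The hard part, in my view, is the first step: carrying out the inductive reduction with clean control over the linear factors of $\Z{F}_v$. In positive characteristic the shift must be iterated through divided-power derivatives, so the rational factors produced are binomial coefficients $\binom{s+k}{p^e}$ rather than plain powers $(s+1)^{p^e}$; matching the continuous $\zp$-zero sets of these factors with the precise $p$-adic form $\lambda=-a/b$ ($p\nmid b$, $p\mid b-a$) is the delicate point, and relies essentially on the continuous-function algebra machinery of Section \ref{sec:cont zp}.
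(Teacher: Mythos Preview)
Your overall strategy---construct functional equations, read off their common zeros---matches the paper's, but there is a genuine gap in how you generate the factors, and the roles of the two families of factors are swapped.

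The level-$1$ Euler identity $E\cdot x^u\bxf=(sd+\wno{\B{u}})\,x^u\bxf$ only produces a function in $\textbf{C}^1(\zp,\fp)$: its vanishing at $\lambda$ is the single congruence $d\lambda\equiv -\wno{\B{u}}\pmod p$, which does \emph{not} force $d\lambda\in\z$. To obtain integrality you need an analogue of the Euler eigenvalue relation at every level $e$. The paper does this by building, for each $e\ge1$, a level-$e$ operator
\[
P_{\B{w},e}=\sum_{\B{a}\in\textbf{N}_n(e)}\Z{B}_{p^{e-1}}(\wno{\B{a}}+|\B{w}|)\,\B{x}^{\B{a}}\,\dpox{(p^e-1)\B{1}_n}\,\B{x}^{(p^e-1)\B{1}_n-\B{a}}\in\dle{\kk[\B{x}]}\cdot\mf{m},
\]
which acts on any weighted-homogeneous $g$ as multiplication by $\Z{B}_{p^{e-1}}(\wtd{g}+|\B{w}|)$. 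Running your descending induction with $P_{\B{w},e}$ (rather than $E$) yields, for every $e$, the annihilating function $(\Z{B}_1+1)\prod_{\rho\in\mc{S}}\Z{B}_{p^{e-1}}(d\,\cdot+\rho+|\B{w}|)$. Only because this holds for \emph{all} $e$ can one argue (by a digit analysis of $d\lambda$ in $\zp$) that either $\Z{B}_1(\lambda)+1=0$ or the $p$-adic expansion of $d\lambda$ is eventually $p-1$, i.e.\ $d\lambda\in\z$. Conversely, the first alternative $p\mid b-a$ is exactly the single level-$1$ condition $\Z{B}_1(\lambda)+1=0$; no iteration via $\partial^{[p^e]}_{x_j}$ is needed or used for it. So the ``hard part'' you identify is real, but it lives on the Euler side, not the $(s+1)$ side.

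For the moreover clause, the monomial hypothesis is not used to compute $\cpt{\mf{m}(\B{a})}(f)$ directly. Its role is to make every test ideal $\tau(f,c)$ monomial for $c\in(0,1)$; one then picks a monomial $\B{x}^{\B{b}}\in\tau(f,\lambda_{i-1})\setminus\tau(f,\lambda_i)$ for each $F$-jumping exponent $\lambda_i$, sandwiches via \cite[Lemma 3.3]{QG21N}, and concludes $\lambda_i=\cpt{\mf{m}(\B{b}+\B{1}_n)}(f)$. The identification $\cpt{\mf{m}(\B{a})}(f)=\wno{\B{a}}/d$ (when it lies in $\z_{(p)}$) is a separate result (Theorem~\ref{thm:F-thre of f}) proved by a $p$-adic digit argument for $\cpt{\mf{m}(\B{a})}(f)$; the first part of the present theorem is not invoked here.
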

	
	In Section \ref{sec:TS pro}, we prove the following additive and multiplicative Thom-Sebastiani properties for Bernstein-Sato roots. The multiplicative property is a  characteristic $p$ analogue of the long-standing question asked by Budur \cite{Bud12} and Popa \cite{Popa}. For polynomials $f\in\cc[x_1,\dots,x_n]$ and $g\in\cc[y_1,\dots,y_m]$, consider $f\cdot g$ as a polynomial in $\cc[x_1,\dots,x_n,y_1,\dots,y_m]$. Do their \Nbs polynomials satisfy $b_{f\cdot g}(s)=b_f(s)\cdot b_g(s)$?
	This question has been solved by Shi-Zuo \cite{SZ24} and Lee \cite{Lee25} independently. And the additive property is a characteristic $p$ analogue of \cite[Theorem 6]{Bur06}.
	
	\begin{thmletter}[Theorem \ref{mthm:C}]
		Let $R$ and $S$ be regular $F$-finite rings of characteristic $p$. Let $I$ be an ideal of $R$ and $J$ an ideal of $S$. Set $Q=R\ott{\fp}S$. We define $\mf{A}$ (resp. $\mf{M}$) to be the ideal of $Q$ generated by elements $\{f\otimes g\}_{f\in I,g\in J}$ (resp. $\{f\otimes 1,1\otimes g\}_{f\in I,g\in J}$).
		\begin{enumerate}
			\item $\mc{z}\in\zp$ is a \Nbs root of $\mf{A}$ if and only if $\mc{z}=\mc{x}+\mc{y}$, where $\mc{x}$ (resp. $\mc{y}$) is a \Nbs root of $I$ (resp. $J$).
			\item If $I$ (resp. $J$) is not contained in the nilradical of $R$ (resp. $S$), then the set of \Nbs roots of $\mf{M}$ is the union of the set of \Nbs roots of $I$ and the set of the \Nbs roots of $J$.
		\end{enumerate}
	\end{thmletter}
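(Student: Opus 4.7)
The plan is to exploit the functional-equation characterisation of Bernstein--Sato roots supplied by Theorem A: $\mc{x}\in\zp$ is a BS root of an ideal $\mathfrak{K}$ iff every $\Z{F}\in\cut{\fp}$ appearing in a BS functional equation for $\mathfrak{K}$ satisfies $\Z{F}(\mc{x})=0$. Equivalently, the BS roots coincide with the non-zero multi-eigenvalues of the higher Euler operators acting on $N_{\mathfrak{K}}$. Fix $\mc{F}=\{f_i\}_{i=1}^r$ generating $I$ and $\mc{G}=\{g_j\}_{j=1}^m$ generating $J$; then $\{f_i\otimes g_j\}_{(i,j)}$ generates $\mf{A}$, and $\{f_i\otimes 1\}_i\cup\{1\otimes g_j\}_j$ generates $\mf{M}$.

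I would tackle Part (2) first, since its generators split across the tensor factors. The ambient ring $Q[\B{u},\B{v}]$ decomposes as $R[\B{u}]\ott{\fp}S[\B{v}]$, and the defining sequence of the relevant local cohomology splits as the union of two regular sequences in the respective factors. This yields, via Künneth, an identification $H_\mf{M}\cong H_\mc{F}\ott{\fp}H_\mc{G}$ sending $\delta_\mf{M}\mapsto\delta_\mc{F}\otimes\delta_\mc{G}$. The filtration identity $\ideala{\B{u},\B{v}}^k=\ssm{k_1+k_2=k}\ideala{\B{u}}^{k_1}\ideala{\B{v}}^{k_2}$ allows one to compare $V^0D_{Q[\B{u},\B{v}]}\cdot\delta_\mf{M}$ and $V^1D_{Q[\B{u},\B{v}]}\cdot\delta_\mf{M}$ against their tensor-factor analogues, showing that the multi-eigenspaces surviving in $N_\mf{M}$ are precisely those of the form $(\mc{x},0)$ or $(0,\mc{y})$ with respect to the paired Euler operators $\vartheta_{\B{u},p^e}^\mc{F}$ and $\vartheta_{\B{v},p^e}^\mc{G}$. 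Under the single operator $\vartheta_{\B{u},\B{v},p^e}^\mf{M}=\vartheta_{\B{u},p^e}^\mc{F}+\vartheta_{\B{v},p^e}^\mc{G}$ these contribute BS roots $\mc{x}$ and $\mc{y}$ respectively, giving the union. Non-nilpotency of $I$ and $J$ guarantees $N_\mc{F}$ and $N_\mc{G}$ are themselves non-zero, so that the stated union description is correct.

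For Part (1), the generators $f_i\otimes g_j$ of $\mf{A}$ do not split. The key algebraic identity is
\begin{equation*}
\pdd{i,j}(f_i\otimes g_j)^{s_{ij}}=\bigl(\pdd{i}f_i^{\sum_j s_{ij}}\bigr)\otimes\bigl(\pdd{j}g_j^{\sum_i s_{ij}}\bigr),
\end{equation*}
which relates the formal symbol $(\mc{F}\otimes\mc{G})^\B{s}$ to a paired product $\mc{F}^{\alpha}\otimes\mc{G}^{\beta}$ with $\alpha_i=\sum_j s_{ij}$, $\beta_j=\sum_i s_{ij}$, subject to $\ssm{i}\alpha_i=\ssm{j}\beta_j=\ssm{i,j}s_{ij}$. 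The plan is to construct a $\rcut{D_R\ott{\fp}D_S}$-module morphism from (a suitable submodule of) $H'_\mf{A}$ into an analogue of $H'_\mc{F}\ott{\fp}H'_\mc{G}$ that, under $\Theta\ud{\mf{A}}$, realises this reshuffling; tracking the higher Euler operator through the map shows $\vartheta_{\B{t},p^e}^\mf{A}$ acts as $\vartheta_{\B{u},p^e}^\mc{F}+\vartheta_{\B{v},p^e}^\mc{G}$ on multi-eigenspaces modulo $V^1$. The identity $\mc{z}=\mc{x}+\mc{y}$ then follows from standard eigenspace arithmetic on a tensor product under a sum of commuting operators, and the ``only if'' direction uses the same correspondence inversely.

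The principal obstacle lies in Part (1): the parameter spaces have mismatched dimensions ($rm$ versus $r+m$), so the change of variables $(s_{ij})\mapsto(\alpha,\beta)$ is many-to-one. The technical heart is to verify that the resulting $rm-r-m$ extra degrees of freedom on the $\mf{A}$-side are absorbed into $V^1D_{Q[\B{t}]}\cdot\delta_\mf{A}$ and hence collapse in $N_\mf{A}$, so that the $\mf{A}$-module reduces cleanly to a quotient of the tensor-product picture. Dually, for the ``if'' direction one must lift each prescribed multi-eigenvector at $(\mc{x},\mc{y})$ across this collapse; the careful bookkeeping of $V^0$- versus $V^1$-parts, applied alongside the summation identity above and the correspondence from Theorem A, is what makes this lifting possible.
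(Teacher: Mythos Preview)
There is a genuine problem, and it originates in the statement itself: the introduction has the labels of $\mf{A}$ and $\mf{M}$ swapped relative to what is actually proved as Theorem~\ref{mthm:C}. In Section~\ref{sec:TS pro} the ideal $\mf{A}=\ideala{I,J}=I\ott{\fp}S+R\ott{\fp}J$ is the \emph{sum} ideal and satisfies $\textbf{BSR}(\mf{A})=\textbf{BSR}(I)+\textbf{BSR}(J)$, while the \emph{product} ideal $\mf{B}=I\ott{\fp}J$ satisfies $\textbf{BSR}(\mf{B})=\textbf{BSR}(I)\cup\textbf{BSR}(J)$. (A sanity check: for $I=\ideala{x}$, $J=\ideala{y}$ one computes $\textbf{BSR}(\ideala{x,y})=\{-2\}$ and $\textbf{BSR}(\ideala{xy})=\{-1\}$.) Because you are following the introduction's labeling, both of your sketches aim at the wrong conclusions. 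In particular, your Part~(2) claim that the surviving multi-eigenspaces in $N_{\mf{M}}$ are ``of the form $(\mc{x},0)$ or $(0,\mc{y})$'' is false: the K\"unneth identification $H_{\mf{M}}\cong H_{\mc{F}}\ott{\fp}H_{\mc{G}}$ you set up is correct, but under it the higher Euler operators split as $\eul{(\B{u},\B{v}),p^e}=\eul{\B{u},p^e}\otimes 1+1\otimes\eul{\B{v},p^e}$, and the eigenvalues that survive in $N_{\mf{M}}$ are arbitrary pairs $(\mc{x},\mc{y})$ with $\mc{x}\in\textbf{BSR}(I)$, $\mc{y}\in\textbf{BSR}(J)$, giving sums rather than a union. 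Your Part~(1) reshuffling argument is likewise aimed at the wrong target.

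Independently of the labeling issue, the paper's proof takes a completely different and much shorter route than your $N$-module/eigenspace plan. It never analyses $N_{\mc{F}}$ or the $V$-filtration at all; instead it works directly with the differential-jump sets $\mc{B}^\bullet_K(p^e)=\{n\in\nt:\dle{}\cdot K^n\supsetneqq\dle{}\cdot K^{n+1}\}$ and the original definition of \Nbs roots as $p$-adic limits of such sequences. Using Lemma~\ref{lem:diop on tensor} ($\dle{Q}\cong\dle{R}\ott{\fp}\dle{S}$) together with the elementary identities $\ideala{I,J}^n=\sum_{a=0}^n I^a\ott{\fp}J^{n-a}$ and $(I\ott{\fp}J)^n=I^n\ott{\fp}J^n$, one shows by direct tensor-algebra computation that
\[
\mc{B}^\bullet_{\ideala{I,J}}(p^e)=\mc{B}^\bullet_I(p^e)+\mc{B}^\bullet_J(p^e)
\quad\text{and}\quad
\mc{B}^\bullet_{I\ott{\fp}J}(p^e)=\mc{B}^\bullet_I(p^e)\cup\mc{B}^\bullet_J(p^e)
\]
for every $e$. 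Passing to $p$-adic limits gives the inclusions $\supseteq$ immediately; the reverse inclusion in the additive case uses sequential compactness of $\zp$ to extract convergent subsequences. This bypasses entirely the delicate $V^0$/$V^1$ bookkeeping and the ``collapse of extra degrees of freedom'' that your Part~(1) sketch flags as the principal obstacle.
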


	\section{Preliminaries}
	
	\subsection{Differential operators}\label{subsec:diff}
	Let $\kk$ be a field, and let $R$ be a commutative $\kk$-algebra. $\End{\kk}(R)$ denotes the ring of $\kk$-linear endomorphisms of $R$.
	
	\begin{definition}\label{def:diff oper}
		Fix an integer $n\in\nt$. $D^n_{R/\kk}$ is the $R$-module of all $\kk$-linear \Ndsp on $R$ of order $\leq n$. It can be explicitly described as follows:
		\begin{enumerate}
			\item $D^0_{R/\kk}=\End{R}(R)$. We identity $R$ with $D^0(R/\kk)$.
			\item $D^n_{R/\kk}=\{\phi\in\End{\kk}(R):[\phi,r]\in D^{n-1}_{R/\kk}\}$. Here $[\phi,r]$ is the commutator $\phi\circ r-r\circ\phi$.
		\end{enumerate}
		The ring of $\kk$-linear \Ndsp on $R$ is defined by $D_{R/\kk}=\cup\ud{n\in\nt}D^n_{R/\kk}$.
	\end{definition}
	
	Now we suppose that $\kk$ is a perfect field of prime characteristic $p$. For every $e\in\nt$, let $R^{p^e}=\{r^{p^e}:r\in R\}$ denote the subring of $p^e$-th powers of $R$. We assume that $R$ is $F$-finite, i.e., $R$ is $R^{p^e}$-finite for every $e$.
	
	\begin{lemma}[\cite{Yek92}]
		The ring of $\kk$-linear \Ndsp on $R$ is independent of the choice of the perfect ground field $\kk$. It is given by
		\begin{equation*}
			D_{R/\kk}=\bcup{e\in\nt}\dle{R},\quad \text{where}\;\dle{R}=\End{R^{p^e}}(R).
		\end{equation*}
		We call the $R$-algebra $\dle{R}$ the ring of differential operators of level $e$ on $R$. Throughout the paper, we will not emphasize the perfect field $\kk$, as every $F$-finite ring of characteristic $p$ can be seen as an $\fp$-algebra.
	\end{lemma}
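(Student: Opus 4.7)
My plan is to prove the two directions of $D_{R/\kk}=\bigcup_{e\in\nt}\End{R^{p^e}}(R)$ through the bimodule description of differential operators, and then observe that the right-hand side is manifestly independent of $\kk$.

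First I would recall the alternative characterization: a $\kk$-linear map $\phi\colon R\to R$ lies in $D^n_{R/\kk}$ if and only if the associated left $R$-linear map $\tilde\phi\colon R\ott{\kk}R\to R$ sending $a\otimes b$ to $a\phi(b)$ vanishes on $I^{n+1}$, where $I=\knr\mu$ and $\mu\colon R\ott{\kk}R\to R$ is multiplication. The ideal $I$ is generated by elements of the form $1\otimes r-r\otimes 1$. Since $\kk$ is perfect, $\kk=\kk^{p^e}\subseteq R^{p^e}$ for every $e$, so there is a well-defined surjection $R\ott{\kk}R\twoheadrightarrow R\ott{R^{p^e}}R$, and an $R^{p^e}$-linear endomorphism of $R$ corresponds exactly to a left $R$-linear map $R\ott{R^{p^e}}R\to R$.

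For the inclusion $D_{R/\kk}\subseteq\bigcup_{e}\dle{R}$, the key identity is $1\otimes r^{p^e}-r^{p^e}\otimes 1=(1\otimes r-r\otimes 1)^{p^e}$, which holds inside the commutative ring $R\ott{\kk}R$ in characteristic $p$. Thus this element lies in $I^{p^e}$. If $\phi\in D^n_{R/\kk}$ and $p^e\geq n+1$, then $\tilde\phi$ annihilates this element against any $1\otimes x$, giving $\phi(r^{p^e}x)=r^{p^e}\phi(x)$, hence $\phi\in\dle{R}$. For the reverse inclusion $\dle{R}\subseteq D_{R/\kk}$, I would exploit $F$-finiteness: pick a finite set of $R^{p^e}$-module generators $r_1,\dots,r_m$ of $R$, so that $R\ott{R^{p^e}}R$ is a finitely generated $R$-module whose augmentation ideal $\bar I$ is generated by the images of $1\otimes r_i-r_i\otimes 1$. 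Each such generator satisfies $(1\otimes r_i-r_i\otimes 1)^{p^e}=1\otimes r_i^{p^e}-r_i^{p^e}\otimes 1=0$ in $R\ott{R^{p^e}}R$, and since the ring is commutative, $\bar I$ is nilpotent with an explicit bound such as $\bar I^{m(p^e-1)+1}=0$. Consequently $I^{N}$ lies in the kernel of $R\ott{\kk}R\twoheadrightarrow R\ott{R^{p^e}}R$ for $N$ large, so any $R^{p^e}$-linear $\phi$ gives a $\tilde\phi$ that vanishes on $I^N$, placing $\phi$ in $D^{N-1}_{R/\kk}$.

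Combining the two inclusions gives the equality $D_{R/\kk}=\bigcup_{e}\End{R^{p^e}}(R)$; the independence of $\kk$ is immediate because the subring $R^{p^e}$ and hence $\End{R^{p^e}}(R)$ are defined purely in terms of the Frobenius on $R$, with no reference to the ground field. The main obstacle I anticipate is verifying the nilpotence bound for $\bar I$ cleanly: this requires the commutativity of $R\ott{R^{p^e}}R$ together with the finite generation coming from $F$-finiteness, and care must be taken that the passage from ``$p^e$-th power of each generator is zero'' to ``a uniform power of the ideal is zero'' is presented with the correct combinatorial estimate.
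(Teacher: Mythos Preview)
Your argument is correct. The paper itself does not prove this lemma: it simply records it as a known result with the citation \cite{Yek92}, so there is no proof in the paper to compare against. What you have sketched is essentially the standard argument (as in Yekutieli or Grothendieck's original treatment via principal parts): the bimodule description of $D^n_{R/\kk}$ through the diagonal ideal $I\subseteq R\ott{\kk}R$, the Frobenius identity $(1\otimes r - r\otimes 1)^{p^e}=1\otimes r^{p^e}-r^{p^e}\otimes 1$ to get one inclusion, and the nilpotence of the diagonal ideal in $R\ott{R^{p^e}}R$ (via $F$-finiteness and pigeonhole on the finitely many generators) for the other. Your combinatorial bound $\bar I^{\,m(p^e-1)+1}=0$ is the right one, since in a commutative ring any monomial of that length in $m$ elements each of $p^e$-th power zero must repeat some factor $p^e$ times. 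The only minor point worth tightening is to say explicitly that the generators $1\otimes r_i - r_i\otimes 1$ generate $\bar I$ as an \emph{ideal} of $R\ott{R^{p^e}}R$ (not merely as an $R$-module), which is what makes the pigeonhole argument on monomials in the generators suffice.
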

	
	Let $S$ be a multiplicatively closed subset of $R$. Any element of $S^{-1}R$ has the form $\frac{r}{s^p}$, where $r\in R$ and $s\in S$. So $S^{-1}R$ is $F$-finite.
	
	\begin{lemma}[\text{[\textcolor{blue}{BJNnB19}, Remark 2.18], [\textcolor{blue}{Qui21}, Proposition 2.30]}]\label{lem:local of diff}
		There are compatible isomorphisms of $S^{-1}R$-algebras
		\begin{equation*}
			S^{-1}R\ott{R}\dle{R}\cong \dle{S^{-1}R},\quad e\in\nt.
		\end{equation*}
		They give rise to an $S^{-1}R$-algebra isomorphism $S^{-1}R\ott{R}D_R\to D_{S^{-1}R}$. Given any $\phi\in D_R$, we still denote its image under the localization map $D_R\to S^{-1}R\ott{R}D_R$ by $\phi$. Action of $\phi$ on $S^{-1}R$ can be interpreted as:
		\begin{enumerate}
			\item If $\phi\in D^0_{R/\kk}$, then $\phi(\frac{r}{s})=\frac{\phi(r)}{s}$.
			\item If $\phi\in D^n_{R/\kk}$, we assume that the action on $S^{-1}R$ has been defined for every \Ndpo on $R$ of order $\leq n-1$, then
			\begin{equation*}
				\phi\left(\frac{r}{s}\right)=\frac{\phi(r)-[\phi,s](\frac{r}{s})}{s}.
			\end{equation*}
			\item If $\phi\in\dle{R}$, then $\phi(\frac{r}{s})=\frac{\phi(s^{p^e-1}r)}{s^{p^e}}$.
		\end{enumerate}
	\end{lemma}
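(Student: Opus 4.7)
The plan is to first construct a natural extension of each level-$e$ differential operator on $R$ to one on $S^{-1}R$, then use the $F$-finiteness hypothesis to upgrade this extension to the claimed isomorphism, and finally deduce the formulas (1)--(3) by direct computation.

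The starting observation is that if $\phi \in \dle{R} = \End{R^{p^e}}(R)$ is to be extended to an $(S^{-1}R)^{p^e}$-linear endomorphism $\tld{\phi}$ of $S^{-1}R$, then since $s^{p^e} \in R^{p^e}$ is invertible in $S^{-1}R$, the value $\tld{\phi}(r/s)$ is forced to equal $\phi(s^{p^e-1}r)/s^{p^e}$. I would first check this formula is independent of the representative $r/s$ (using $R^{p^e}$-linearity of $\phi$ applied to the witness relation $u(s'r-sr')=0$) and gives an $(S^{-1}R)^{p^e}$-linear map, yielding a ring \whom $\dle{R} \to \dle{S^{-1}R}$. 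Combined with the inclusion $S^{-1}R \hookrightarrow \dle{S^{-1}R}$, this produces a canonical $S^{-1}R$-algebra homomorphism $\Phi_e: S^{-1}R \ott{R} \dle{R} \to \dle{S^{-1}R}$, and simultaneously verifies item (3).

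To show $\Phi_e$ is bijective, I would invoke $F$-finiteness: $R$ is $R^{p^e}$-finite, and since $R$ is regular, Kunz's theorem implies the Frobenius is flat, so $R$ is finitely presented over $R^{p^e}$. The standard Hom-flat base change theorem then yields
\[
\End{R^{p^e}}(R) \ott{R^{p^e}} (S^{p^e})^{-1}R^{p^e} \;\cong\; \End{(S^{p^e})^{-1}R^{p^e}}\!\bigl(R \ott{R^{p^e}} (S^{p^e})^{-1}R^{p^e}\bigr).
\]
Since $(S^{-1}R)^{p^e} = (S^{p^e})^{-1}R^{p^e}$ and $R \ott{R^{p^e}} (S^{p^e})^{-1}R^{p^e} = S^{-1}R$ (because $1/s = s^{p^e-1}/s^{p^e}$), the right-hand side equals $\dle{S^{-1}R}$. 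Identifying $S^{-1}R \ott{R} \dle{R}$ with $\dle{R} \ott{R^{p^e}}(S^{-1}R)^{p^e}$---both are the localization of $\dle{R}$ obtained by inverting $S$, since inverting each $s\in S$ is the same as inverting $s^{p^e}$---gives the desired isomorphism.

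For compatibility across levels, the inclusions $\dle{R} \subseteq \mathbf{D}^{e+1}_R$ and $\dle{S^{-1}R} \subseteq \mathbf{D}^{e+1}_{S^{-1}R}$ are preserved by these extension maps, so taking the union over $e\in\nt$ yields the isomorphism $S^{-1}R \ott{R} D_R \cong D_{S^{-1}R}$. Items (1) and (2) follow by induction on the order $n$ via the other filtration: (1) is the case $\phi \in D^0_{R/\kk} = R$, where $\phi$ acts by multiplication; for (2), the identity $\phi(r) = \phi(s\cdot r/s) = s\tld{\phi}(r/s) + [\phi,s](r/s)$ (with $[\phi,s] \in D^{n-1}_{R/\kk}$ extended by the inductive hypothesis) yields the stated formula after solving for $\tld{\phi}(r/s)$. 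The two descriptions (2) and (3) agree when $\phi \in \dle{R}$ because both produce the unique $R^{p^e}$-linear extension of $\phi$. The main obstacle is establishing bijectivity of $\Phi_e$; this rests crucially on $F$-finiteness plus regularity (via Kunz's theorem) to make Hom commute with the relevant base change, without which even surjectivity can fail.
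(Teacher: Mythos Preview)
The paper does not supply its own proof of this lemma; it is simply quoted from the external references [BJNnB19, Remark 2.18] and [Qui21, Proposition 2.30], so there is nothing to compare against directly. Your argument is the standard one and is correct in outline: extend $\phi\in\dle{R}$ to $S^{-1}R$ by the forced formula $\phi(r/s)=\phi(s^{p^e-1}r)/s^{p^e}$, then identify $S^{-1}R\ott{R}\dle{R}$ with $\dle{S^{-1}R}$ via Hom--localization, and read off (1)--(3).

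One point deserves correction. You invoke regularity of $R$ and Kunz's theorem to deduce that $R$ is finitely presented over $R^{p^e}$, but at this stage of the paper $R$ is only assumed $F$-finite; the regularity hypothesis is imposed later, in \S\ref{subsec:Fthres}. In fact Kunz's flatness is not what is needed here. For the isomorphism $S^{-1}\End{R^{p^e}}(R)\cong\End{(S^{-1}R)^{p^e}}(S^{-1}R)$ one only needs $R$ to be finitely presented as an $R^{p^e}$-module, and for that it suffices that $R^{p^e}$ be Noetherian: since the $e$-th Frobenius $R\to R^{p^e}$ is a surjective ring homomorphism, $R^{p^e}$ is Noetherian whenever $R$ is, and then $F$-finiteness (finite generation) already gives finite presentation. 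So your appeal to regularity should be replaced by a Noetherian hypothesis on $R$ (which is in any case satisfied in all of the paper's applications). With this adjustment your argument goes through, and your derivation of the recursive formula (2) from the commutator identity $\phi(r)=s\,\tld{\phi}(r/s)+[\phi,s](r/s)$ is correct.
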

	
	\subsection{Higher Euler operators on the polynomial ring}\label{subsec:Higher Euler}
	Let $R$ be an $F$-finite ring of characteristic $p$. Fix an integer $r\in\nt$, consider the $F$-finite polynomial ring $R[\B{t}]=R[t_1,\dots,t_r]$. We introduce the following notation.
	
	Let $\B{a}=(a_1,\dots,a_r),\B{b}=(b_1,\dots,b_r)$ be $r$-tuples of $\nt$.
	\begin{itemize}
		\item Define the partial order on $r$-tuples: $\B{a}\leq\B{b}$ if and only if  if $a_i\leq b_i$ for all $1\leq i\leq r$. Let $\B{0}_r=(0,\dots,0)$ and $\B{1}_r=(1,\dots,1)$. The set $\textbf{N}_r(e)$ consists of all $r$-tuples $\B{a}\in\nt^r$ such that $\B{a}\leq (p^e-1)\B{1}_r$.
		\item The generalized binomial coefficient $\binom{\B{b}}{\B{a}}$ is $\prod^r_{i=1}\binom{b_i}{a_i}$.
		\item Let $u\in R$, $u\B{t}^{\B{a}}$ denotes the monomial $ut_1^{a_1}\dots t_r^{a_r}$. If $u\neq 0$, degree of $u\B{t}^{\B{a}}$ is the integer $|\B{a}|=\sum^r_{i=1}a_i$.
	\end{itemize}
	
	For each $\phi\in D_R$ and $\B{a}\in\nt^r$, the divided power differential operator $\phi\,\dpot{\B{a}}$ acts on $R[\B{t}]$ by
	\begin{equation*}
		\phi\,\dpot{\B{a}}:u\B{t}^{\B{b}}\mapsto
		\begin{cases}
			\phi(u)\binom{\B{b}}{\B{a}}\B{t}^{\B{b}-\B{a}},&\text{if}\;\B{a}\leq\B{b},\\
			0,&\text{otherwise}.
		\end{cases}
	\end{equation*}
	An operator $\phi$ on $R$ can be seen as the operator $\phi\,\dpot{\B{0}_r}$ on $R[\B{t}]$. If $\phi\in\dle{R}$ and $\B{a}\in\textbf{N}_r(e)$, then $\phi\,\dpot{\B{a}}\in\dle{R[\B{t}]}$ (This comes from Lucas's Theorem \eqref{eq:luc}). Usually we take $\phi=1$ (the identity map). 
	
	\begin{lemma}[\text{[\textcolor{blue}{JNnBQG23}, Lemma 2.13]}]\label{lem:V-fil in pc}
		For each $e\in\nt$, $\dle{R[\B{t}]}$ is a $\dle{R}$-module generated by 
		\begin{equation*}
			\{\dpot{\B{a}}\B{t}^{\B{b}}\}_{\B{a}\in \textbf{N}_r(e),\B{b}\in\nt^r}.
		\end{equation*}	
		Every $\varphi\in\dle{R[\B{t}]}$ can be written as an $\fp$-linear combination of operators of the form $\phi\,\dpot{\B{a}}\B{t}^{\B{b}}$ with $\phi\in\dle{R}$, $\B{a}\in \textbf{N}_r(e)$ and $\B{b}\in \nt^r$.
		
		Fix an integer $i\in\z$, let $(D_{R[\B{t}]})_i$ be the $D_R$-submodule of $D_{R[\B{t}]}$ generated by $\{\dpot{\B{a}}\B{t}^{\B{b}}\}_{\B{a},\B{b}\in\nt^r,|\B{b}|-|\B{a}|=i}$. The family $\{(D_{R[\B{t}]})_i\}_{i\in\z}$  characterizes the $V$-filtration along $\ideala{\B{t}}=\ideala{t_1,\dots,t_r}$ as
		\begin{equation*}\label{eq:Vfil and deg}
			V^iD_{R[\B{t}]}=\{\varphi\in D_{R[\B{t}]}:\varphi\cdot\ideala{\B{t}}^j\subseteq \ideala{\B{t}}^{j+i}\;\text{for all}\;j\in\z\}=\bops{j\geq i}(D_{R[\B{t}]})_j.
		\end{equation*}
		When $i\geq 0$, it follows that $V^iD_{R[\B{t}]}=(D_{R[\B{t}]})_0\cdot\ideala{\B{t}}^i$. 
	\end{lemma}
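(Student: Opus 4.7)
The plan is to prove the three assertions in order, with the main leverage coming from the tensor factorization $R[\B{t}] = R \ott{\fp} \fp[\B{t}]$ together with the freeness of $R[\B{t}]$ over its Frobenius subring $R[\B{t}]^{p^e} = R^{p^e}[\B{t}^{p^e}]$.

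For the first two assertions, I would begin by verifying that $\dpot{\B{a}} \in \dle{\fp[\B{t}]}$ for every $\B{a}\in\textbf{N}_r(e)$: checking $\fp[\B{t}^{p^e}]$-linearity reduces, via Lucas's theorem, to the congruence $\binom{\B{c}+p^e\B{d}}{\B{a}} \equiv \binom{\B{c}}{\B{a}} \pmod{p}$ whenever each entry of $\B{a}$ is strictly less than $p^e$. Combined with the fact that multiplication by $\B{t}^{\B{b}}$ is an $\fp[\B{t}]$-linear (and so \emph{a fortiori} $\fp[\B{t}^{p^e}]$-linear) endomorphism, this places each $\dpot{\B{a}}\B{t}^{\B{b}}$ inside $\dle{\fp[\B{t}]}$. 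To see these operators span, I would exploit the description $\dle{\fp[\B{t}]} = \End{\fp[\B{t}^{p^e}]}(\fp[\B{t}])$: since $\fp[\B{t}]$ is free over $\fp[\B{t}^{p^e}]$ with basis $\{\B{t}^{\B{a}}\}_{\B{a}\in\textbf{N}_r(e)}$, any endomorphism is determined by its values on this finite basis, and a direct computation with the action of $\dpot{\B{a}}\B{t}^{\B{b}}$ on basis monomials shows every prescribed assignment is realized by a suitable $\fp$-linear combination. The lift from $\fp[\B{t}]$ to $R[\B{t}]$ then uses the $R$-algebra identification $\dle{R[\B{t}]} \cong \dle{R} \ott{\fp} \dle{\fp[\B{t}]}$ induced by the decomposition $R[\B{t}]^{p^e} = R^{p^e} \ott{\fp} \fp[\B{t}^{p^e}]$, which immediately delivers both the generating statement and the $\fp$-linear-combination statement.

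For the $V$-filtration description, I would note that each generator $\phi\dpot{\B{a}}\B{t}^{\B{b}}$ shifts the $\B{t}$-degree of a homogeneous element of $R[\B{t}]$ by exactly $|\B{b}|-|\B{a}|$, yielding $(D_{R[\B{t}]})_j \subseteq V^j D_{R[\B{t}]}$ and hence $\bops{j\geq i}(D_{R[\B{t}]})_j \subseteq V^i D_{R[\B{t}]}$. For the reverse inclusion, decompose any $\varphi\in \dle{R[\B{t}]}$ into its graded components $\varphi = \ssm{j\in\z} \varphi_j$ with $\varphi_j \in (D_{R[\B{t}]})_j$, obtained abstractly by projecting $\varphi(R[\B{t}]_k)$ onto $R[\B{t}]_{k+j}$ for each $k$ and checking $R[\B{t}]^{p^e}$-linearity. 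Evaluating on any monomial $\B{t}^{\B{c}}$, the images $\varphi_j(\B{t}^{\B{c}})$ sit in pairwise distinct $\B{t}$-degrees $|\B{c}|+j$; the condition $\varphi(\B{t}^{\B{c}})\in \ideala{\B{t}}^{|\B{c}|+i}$ then forces $\varphi_j(\B{t}^{\B{c}})=0$ for every $j<i$ and every $\B{c}$, i.e.\ $\varphi_j=0$ for $j<i$. Finally, $V^i D_{R[\B{t}]} = (D_{R[\B{t}]})_0\cdot \ideala{\B{t}}^i$ when $i\geq 0$ follows by writing each generator $\dpot{\B{a}}\B{t}^{\B{b}'}$ with $|\B{b}'|-|\B{a}|\geq i$ as $(\dpot{\B{a}}\B{t}^{\B{b}})\cdot \B{t}^{\B{c}}$, where $\B{b}+\B{c}=\B{b}'$ and $|\B{b}|=|\B{a}|$; such a decomposition always exists because one can distribute the $|\B{a}|\leq|\B{b}'|$ units greedily among the entries of $\B{b}'$, placing the factor $\dpot{\B{a}}\B{t}^{\B{b}}$ in $(D_{R[\B{t}]})_0$ and the factor $\B{t}^{\B{c}}$ in $\ideala{\B{t}}^{j}\subseteq \ideala{\B{t}}^i$.

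The main technical obstacle is the spanning claim for $\dle{\fp[\B{t}]}$: one must verify that the explicit action of $\dpot{\B{a}}\B{t}^{\B{b}}$ on the $\fp[\B{t}^{p^e}]$-basis $\{\B{t}^{\B{a}}\}_{\B{a}\in\textbf{N}_r(e)}$ realizes arbitrary endomorphisms, which requires a careful combinatorial unraveling of the binomial coefficients that appear (again via Lucas's theorem). Once this spanning is secured, the remainder of the argument is bookkeeping with the $\B{t}$-grading.
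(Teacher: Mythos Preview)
The paper does not give its own proof of this lemma; it is quoted verbatim from \cite{JNQ23} with the citation \textup{[JNnBQG23, Lemma 2.13]}. So there is no in-paper argument to compare against, and your proposal must stand on its own merits.

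Your outline is sound and follows the natural strategy. The reduction to $\fp[\B{t}]$ via the tensor factorization $\dle{R[\B{t}]}\cong\dle{R}\ott{\fp}\dle{\fp[\B{t}]}$ is the right move, and your treatment of the $V$-filtration (graded decomposition of $\varphi$, degree-counting to kill the low components, and the greedy splitting $\B{t}^{\B{b}'}=\B{t}^{\B{b}}\B{t}^{\B{c}}$ for the final claim) is correct. You also correctly flag the spanning of $\dle{\fp[\B{t}]}$ by $\{\dpot{\B{a}}\B{t}^{\B{b}}\}$ as the combinatorial crux.

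One point deserves a word more of care. You invoke the identification $\dle{R[\B{t}]}\cong\dle{R}\ott{\fp}\dle{\fp[\B{t}]}$ as ``induced by'' the factorization of $R[\B{t}]^{p^e}$, but $R$ is only assumed $F$-finite here, not regular, so $R$ need not be free over $R^{p^e}$ and the identification is not automatic. It does hold: since $\fp[\B{t}]$ is free of finite rank over $\fp[\B{t}^{p^e}]$, one reduces to showing $\End_{R^{p^e}[\B{t}^{p^e}]}(R[\B{t}^{p^e}])\cong\dle{R}\ott{R^{p^e}}R^{p^e}[\B{t}^{p^e}]$, which follows from Hom--tensor compatibility using that $R$ is finitely presented over $R^{p^e}$ (Noetherian plus $F$-finite) and that $R^{p^e}\to R^{p^e}[\B{t}^{p^e}]$ is flat. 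This is the same mechanism the paper uses later in Lemma~\ref{lem:diop on tensor}, though there regularity is available and they localize to get genuine freeness. You should make this step explicit rather than leave it implicit.
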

	
	$(D_{R[\B{t}]})_0$ plays an significant role. For each $k\in\nt$, the $k$-th Euler operator $\eul{\B{t},k}$ is the $R$-linear operator
	\begin{equation*}
		\ssm{\B{a}\in\nt^r:|\B{a}|=k}(-1)^k\dpot{\B{a}}\B{t}^{\B{a}}.
	\end{equation*}
	$\eul{\B{t},0}$ is the identity map. $\eul{\B{t},k}$ sends $\B{t}^{\B{b}}$ to $\binom{-|\B{b}|-r}{k}\B{t}^{\B{b}}$. Actually, by observing that
	\begin{equation*}
		\ssm{\B{a}\in\nt^r:|\B{a}|=k}(-1)^k\binom{\B{a}+\B{b}}{\B{a}}=\ssm{\B{a}\in\nt^r:|\B{a}|=k}\binom{-\B{b}-\B{1}_r}{\B{a}},
	\end{equation*}
	the right hand side corresponds to the coefficient of $x^k$ in the Taylor series expansion of $\prod^r_{i=1}(1+x)^{-b_i-1}=(1+x)^{-|\B{b}|-r}$, which is precisely $\binom{-|\B{b}|-r}{k}$.
	
	\subsection{$F$-thresholds, $\nu$-invariants and $F$-jumping exponents}\label{subsec:Fthres}
	We fix the following notation. Let $I=\ideala{f_1,\dots,f_m}$ be a non-zero ideal of $R$. For every integer  $e\in\nt$, we define the following ideals associated to $I$.
	\begin{itemize}
		\item $I\ppm{e}=\ideala{f^{p^e}:f\in I}=\ideala{f_1^{p^e},\dots,f_r^{p^e}}$.
		\item $I\ppdm{e}$ is the unique smallest ideal $J$ of $R$ with respect to inclusion and such that $I\subseteq J\ppm{e}$.
	\end{itemize}

	\begin{example}[\text{[\textcolor{blue}{BMS09}, Proposition 2.5]}]\label{exa:BMS09}
		Suppose that $R$ is free over $R^p$. Immediately, $R$ is free over $R^{p^e}$. Let $\{u_i\}_{i=1}^n$ be the $R^{p^e}$-basis of $R$. For an ideal $I=\ideala{f_1,\dots,f_m}$ of $R$ with $f_i=\sum^m_{i=1}g_{ij}^{p^e}u_j$, we have
		\begin{equation*}
			I\ppdm{e}=\{g_{ij}:1\leq i\leq m,1\leq j\leq n\}.
		\end{equation*}
		
		In this case, $\dle{R}$ is an $R$-free algebra generated by $\{u^*_i\}_{i=1}^n$, where $u^*_i$ is given on the basis by $u^*_i(u_j)=\delta\ud{i,j}$ (the Kronecker symbol). It is clear that for all $e\in\nt$, there is
		\begin{equation*}
			\dle{R}\cdot I=\{\phi(f):\phi\in\dle{R},f\in I\}=(I\ppdm{e})\ppm{e}.
		\end{equation*}
	\end{example}
	
	\begin{definition}\label{def:nu-invar}
		Let $\mf{a}$ be an ideal of $R$ containing $I$ in its radical. For every integer $e\in\nt$, we define
		\begin{equation*}
			\nJI{e}=\max\{n\in\nt:I^n\not\subseteq \mf{a}\ppm{e}\}.
		\end{equation*}
		And we call the set $\ninv{I}(p^e)=\{\nJI{e}:R\supsetneqq \sqrt{\mf{a}}\supseteq I\}$ $\nu$-invariants of level $e$ for $I$.
	\end{definition}
	
	\begin{lemma}\label{lem:vin^I_J Noe}
		For ideals $R\supsetneq \sqrt{\mf{a}}\supseteq I$, the sequence $\{\frac{\nJI{e}}{\pow{e}}\}_{e\in\nt}$ is bounded. And we have the inequality
		\begin{equation}\label{eq:v(p^e)/p^e}
			\frac{\nJI{e_1+e_2}}{\pow{e_1+e_2}}-\frac{\nJI{e_1}}{\pow{e_1}}\leq\frac{\mu(I)}{p^{e_1}},\quad \text{for all}\;e_1,e_2\in\nt.
		\end{equation}
		Here $\mu(I)$ is the minimum of the number of generators of $I$.
	\end{lemma}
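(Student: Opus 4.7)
The plan is to establish the inequality \eqref{eq:v(p^e)/p^e} first; boundedness will then fall out by specializing $e_1 = 0$. The heart of the argument is a pigeonhole containment, and the key point is to track the exponents carefully enough to land on the constant $\mu(I)$ rather than $\mu(I)+1$.

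First I would prove the containment
\[
I^{(m-1)p^{e_2}+\mu(I)(p^{e_2}-1)+1}\subseteq (I^m)\ppm{e_2}\qquad \text{for all } m\geq 1,\ e_2\in\nt.
\]
Fix generators $f_1,\dots,f_r$ of $I$ with $r=\mu(I)$. The left ideal is generated by monomials $f_1^{a_1}\cdots f_r^{a_r}$ with $\ssm{i}a_i=(m-1)p^{e_2}+r(p^{e_2}-1)+1$. Write $a_i=q_ip^{e_2}+s_i$ with $0\leq s_i<p^{e_2}$; then $\ssm{i}s_i\leq r(p^{e_2}-1)$, so
\[
\ssm{i}q_i\;\geq\;\frac{(m-1)p^{e_2}+1}{p^{e_2}}\;>\;m-1,
\]
forcing $\ssm{i}q_i\geq m$. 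Hence $f_1^{a_1}\cdots f_r^{a_r}=(f_1^{q_1}\cdots f_r^{q_r})^{p^{e_2}}\cdot f_1^{s_1}\cdots f_r^{s_r}\in(I^m)\ppm{e_2}$. The delicate point is that using the naive pigeonhole bound $I^{mp^{e_2}+r(p^{e_2}-1)}$ instead would yield a constant of $\mu(I)+1$; the sharper bound $(m-1)p^{e_2}+r(p^{e_2}-1)+1$ is what makes the stated inequality hold.

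Next I would specialize to $m=\nJI{e_1}+1$. By definition of $\nJI{e_1}$ one has $I^m\subseteq \mf{a}\ppm{e_1}$, and since $\bigl(\mf{a}\ppm{e_1}\bigr)\ppm{e_2}=\mf{a}\ppm{e_1+e_2}$, the containment from the previous paragraph gives
\[
I^{\nJI{e_1}p^{e_2}+\mu(I)(p^{e_2}-1)+1}\;\subseteq\;(I^m)\ppm{e_2}\;\subseteq\;\mf{a}\ppm{e_1+e_2}.
\]
By the maximality in the definition of $\nJI{e_1+e_2}$, this forces $\nJI{e_1+e_2}\leq \nJI{e_1}p^{e_2}+\mu(I)(p^{e_2}-1)$. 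Dividing by $\pow{e_1+e_2}$ yields
\[
\frac{\nJI{e_1+e_2}}{\pow{e_1+e_2}}-\frac{\nJI{e_1}}{\pow{e_1}}\;\leq\;\frac{\mu(I)}{p^{e_1}}-\frac{\mu(I)}{p^{e_1+e_2}}\;\leq\;\frac{\mu(I)}{p^{e_1}},
\]
which is \eqref{eq:v(p^e)/p^e}.

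Finally, for boundedness, taking $e_1=0$ and $e_2=e$ in the inequality above gives $\nJI{e}/\pow{e}\leq \nJI{0}+\mu(I)$ for every $e\in\nt$. Since $R$ is Noetherian ($F$-finiteness gives finite generation of all ideals here) and $I\subseteq\sqrt{\mf{a}}$, some power $I^k$ is contained in $\mf{a}$, so $\nJI{0}\leq k-1<\infty$ and the sequence is uniformly bounded. The only real obstacle is the combinatorial bookkeeping in the first paragraph; everything afterwards is a formal consequence.
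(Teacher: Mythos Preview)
Your proof is correct. The pigeonhole containment $I^{(m-1)p^{e_2}+\mu(I)(p^{e_2}-1)+1}\subseteq (I^m)\ppm{e_2}$ is exactly the content of \cite[Lemma~3.3]{Ste18}, which the paper simply cites for the inequality \eqref{eq:v(p^e)/p^e}; you have supplied the argument in full, and your care with the exponent $(m-1)p^{e_2}+r(p^{e_2}-1)+1$ rather than the coarser $mp^{e_2}+r(p^{e_2}-1)$ is precisely what is needed to land on $\mu(I)$.

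The organization differs from the paper's. The paper proves boundedness \emph{first} by an independent pigeonhole on $\mf{a}$: from $\mf{a}^{(p^e-1)\mu(\mf{a})+1}\subseteq\mf{a}\ppm{e}$ and $I^N\subseteq\mf{a}$ one gets $\nJI{e}/p^e<N(\mu(\mf{a})+1)$, a bound depending on $\mu(\mf{a})$ and $N$. It then invokes \cite{Ste18} for \eqref{eq:v(p^e)/p^e}. You instead prove the inequality first and specialize $e_1=0$ to extract boundedness, with a bound depending on $\mu(I)$ and $\nJI{0}$. Your route is more economical---one pigeonhole argument does both jobs---and self-contained, while the paper's version keeps the two claims logically independent. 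Both are valid; the underlying combinatorics is the same.
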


	\begin{proof}
		It is immediate to see that $\mf{a}^{(\pow{e}-1)\mu(\mf{a})+1}\subseteq \mf{a}\ppm{e}$. Suppose that $I^N\subseteq \mf{a}$, we obtain
		\begin{equation*}
			\frac{\nJI{e}}{\pow{e}}\leq \frac{N((\pow{e}-1)\mu(\mf{a})+1)}{\pow{e}}<N(\mu(\mf{a})+1).
		\end{equation*}
		\eqref{eq:v(p^e)/p^e} follows from \cite[Lemma 3.3]{Ste18}. If $I=\ideala{f}$ is a principal ideal, we can express \eqref{eq:v(p^e)/p^e} by saying that
		if $f^{\nu^{\mf{a}}_{\ideala{f}}(p^e)+1}\in \mf{a}\ppm{e}$, then $ f^{p(\nu^{\mf{a}}_{\ideala{f}}(p^e)+1)}\in \mf{a}\ppm{e+1}$.
	\end{proof}
	
	\eqref{eq:v(p^e)/p^e} implies that the limit
	\begin{equation*}
		\cpt{\mf{a}}(I)=\lim\ud{e\to\infty}\frac{\nJI{e}}{p^e}
	\end{equation*}
	exists, see \cite[Theorem 3.4]{Ste18}, and we call it the \Nft of $I$ with respect to $\mf{a}$. For simplicity, if $I=\ideala{f}$ is a principal ideal, $\nu^\mf{a}_{\ideala{f}}(p^e),\cpt{\mf{a}}(\ideala{f})$ will be denoted by $\nu^{\mf{a}}_f(p^e)$ and $\cpt{\mf{a}}(f)$, respectively.
	
	For properties that involve the $F$-thresholds, from now on we suppose that $R$ is a regular ring. By Kunz's theorem \cite[Corollary 2.7]{Kun69}, the regularity condition implies that the $e$-th iterated Frobenius map $F^e$ is flat (or $R$ is $R^{p^e}$-flat) for some (or equivalently, all) $e\in\nt$.

	\begin{lemma}[\cite{BMS08},\text{[\textcolor{blue}{BMS09}, Lemma 2.2]}]\label{lem:regular prop}
		For any element $f\in R$ and ideals $I,J$ of $R$, one verifies that: \begin{enumerate}
			\item $f\ppm{e}\in I\ppm{e}$ if and only if $f\in I$. $I\ppm{e}\subseteq J\ppm{e}$ if and only if $I\subseteq J$.
			\item $\dle{R}\cdot I=(I\ppdm{e})\ppm{e}$.
		\end{enumerate}
	\end{lemma}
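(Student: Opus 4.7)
The plan is to derive part (1) from Kunz's flatness of Frobenius and to reduce part (2) to the free-module case already covered in Example \ref{exa:BMS09}.

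For part (1), I will exploit faithful flatness of Frobenius. By Kunz's theorem (recalled just before the lemma), the regularity of $R$ implies that $R$ is flat over itself via the $e$-th Frobenius; concretely, writing $F^e_*R$ for $R$ regarded as an $R$-module through the map $r\mapsto r^{p^e}$, the module $F^e_*R$ is $R$-flat. Since $R$ is $F$-finite, $F^e_*R$ is in fact finitely generated over $R$, and the check $F^e_*R/\mathfrak{m}F^e_*R\cong R/\mathfrak{m}\ppm{e}\neq 0$ at each maximal ideal $\mathfrak{m}$ upgrades flatness to faithful flatness. Applying the Frobenius base change $-\otimes_R F^e_*R$ to the exact sequence $0\to I\to R\to R/I\to 0$ produces $0\to I\ppm{e}\to R\to R/I\ppm{e}\to 0$, under which the natural unit map $R/I\to R/I\otimes_R F^e_*R\cong R/I\ppm{e}$ corresponds to $\bar r\mapsto\overline{r^{p^e}}$. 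Faithful flatness forces this unit map to be injective, yielding $f\in I\iff f^{p^e}\in I\ppm{e}$; applying this to generators of $I$ then upgrades the equivalence to $I\subseteq J\iff I\ppm{e}\subseteq J\ppm{e}$.

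For part (2), I will verify the identity $\dle{R}\cdot I=(I\ppdm{e})\ppm{e}$ locally at each maximal ideal and invoke Example \ref{exa:BMS09}. Both sides are compatible with localization: Lemma \ref{lem:local of diff} supplies $S^{-1}R\otimes_R\dle{R}\cong \dle{S^{-1}R}$ and hence $\dle{S^{-1}R}\cdot S^{-1}I=S^{-1}(\dle{R}\cdot I)$, while the minimality characterization of $I\ppdm{e}$ gives $(S^{-1}I)\ppdm{e}=S^{-1}(I\ppdm{e})$ and thus $((S^{-1}I)\ppdm{e})\ppm{e}=S^{-1}((I\ppdm{e})\ppm{e})$. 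Since equality of ideals of $R$ can be tested after localizing at each maximal ideal of $R$, it suffices to verify the identity on $R_\mathfrak{m}$. Such an $R_\mathfrak{m}$ is Noetherian, regular local, and $F$-finite; it is therefore a finitely generated, flat (by Kunz), and hence free module over its subring $(R_\mathfrak{m})^{p^e}$, placing us inside the hypotheses of Example \ref{exa:BMS09}, which provides the required equality.

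The only genuine obstacle is the identification in part (1): one must carefully match $R/I\otimes_R F^e_*R$ with $R/I\ppm{e}$ so that the unit $\bar r\mapsto \bar r\otimes 1$ becomes the Frobenius map $\bar r\mapsto \overline{r^{p^e}}$. Once this identification is in place, faithful flatness immediately yields (1), and (2) then falls out formally from the localization compatibilities established in the preliminaries.
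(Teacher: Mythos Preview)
The paper does not prove this lemma; it is quoted from \cite{BMS08} and \cite{BMS09}. Your argument supplies a proof along standard lines, and part~(1) is correct as written.

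For part~(2), one step needs more than you give it. The equality $(S^{-1}I)\ppdm{e}=S^{-1}(I\ppdm{e})$ does not follow from the minimality characterization alone: minimality yields only $(S^{-1}I)\ppdm{e}\subseteq S^{-1}(I\ppdm{e})$, while it is precisely the \emph{reverse} inclusion that your reduction requires. Indeed, Example~\ref{exa:BMS09} applied to $R_{\mf{m}}$ gives $\dle{R_{\mf{m}}}\cdot I_{\mf{m}}=((I_{\mf{m}})\ppdm{e})\ppm{e}$, and to compare this with the localization of $(I\ppdm{e})\ppm{e}$ you need $(I\ppdm{e})_{\mf{m}}\subseteq (I_{\mf{m}})\ppdm{e}$. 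This inclusion does hold, but it uses Frobenius flatness again: if $K=S^{-1}K'$ is an ideal of $S^{-1}R$ with $K'=K\cap R$ its $S$-saturated contraction and $S^{-1}I\subseteq K\ppm{e}$, then each $f\in I$ satisfies $sf\in (K')\ppm{e}$ for some $s\in S$, hence $s^{p^e}f\in (K')\ppm{e}$; the flat-base-change identity $(K':s)\ppm{e}=((K')\ppm{e}:s^{p^e})$ together with $S$-saturation of $K'$ then forces $f\in (K')\ppm{e}$, so $I\ppdm{e}\subseteq K'$ and $S^{-1}(I\ppdm{e})\subseteq K$. With this filled in, your localization argument goes through. (A minor wrinkle: Example~\ref{exa:BMS09} is stated under the hypothesis that $R$ is free over $R^p$, so you should observe that the same Kunz-plus-local argument makes $R_{\mf{m}}$ free over $(R_{\mf{m}})^p$, not just over $(R_{\mf{m}})^{p^e}$.)
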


	\begin{lemma}\label{lem:F-thre in reg}
		Under the assumptions above, for ideals $R\supsetneqq \sqrt{\mf{a}}\supseteq I$, the sequence $\{\frac{\nJI{e}}{p^e}\}$ is non-decreasing. Moreover, if $I\cap R^\circ\neq\varnothing$ ($R^\circ$ is the set of elements of $R$ which are not contained in any minimal prime ideals of $R$), then we have the following properties for all integers $e\in\nt$.
		\begin{enumerate}
			\item $\frac{\nJI{e}}{p^e}<\cpt{\mf{a}}(I)$.
			\item For any $f\in\sqrt{\mf{a}}$, $\nu^{\mf{a}}_f(p^e)=\lceil p^e\cpt{\mf{a}}(f)\rceil-1$. Here $\ceil{x}$ is the smallest integer $\geq x$.
		\end{enumerate}
	\end{lemma}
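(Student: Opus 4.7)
The plan is to address the three assertions in order, since the second builds on the first and the third builds on the second.

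\emph{Monotonicity.} I would show $p\nJI{e}\leq \nJI{e+1}$ directly. Choose any $g\in I^{\nJI{e}}\setminus\mf{a}\ppm{e}$, which is non-empty by definition, so that $g^p\in I^{p\nJI{e}}$. Applying Lemma \ref{lem:regular prop}(1) to the ideal $\mf{a}\ppm{e}$ with exponent $1$ yields $g^p\notin(\mf{a}\ppm{e})\ppm{1}=\mf{a}\ppm{e+1}$, hence $I^{p\nJI{e}}\not\subseteq \mf{a}\ppm{e+1}$, giving $\nJI{e+1}\geq p\nJI{e}$. No further input beyond flatness of Frobenius is required here, and in particular the hypothesis $I\cap R^{\circ}\neq\varnothing$ is unnecessary for this step.

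\emph{Strict inequality in (1).} This is the main obstacle, and I would proceed by contradiction. If $\nJI{e}/p^e=\cpt{\mf{a}}(I)$, then monotonicity together with convergence of the sequence to $\cpt{\mf{a}}(I)$ forces $\nJI{e+k}=p^k\nJI{e}$ for every $k\in\nt$. Picking $g\in I^{\nJI{e}}\setminus \mf{a}\ppm{e}$ and, by hypothesis, $h\in I\cap R^\circ$, one gets for every $k\geq 0$
\[
    h\cdot g^{p^k}\in I^{p^k\nJI{e}+1}=I^{\nJI{e+k}+1}\subseteq \mf{a}\ppm{e+k}=(\mf{a}\ppm{e})\ppm{k}.
\]
This is precisely the statement that $g$ lies in the tight closure of $\mf{a}\ppm{e}$, with $h\in R^\circ$ as test element. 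The deep input is the Hochster--Huneke theorem that every ideal in a regular ring is tightly closed; applying it gives $g\in\mf{a}\ppm{e}$, a contradiction. This tight closure step is the only genuinely non-formal ingredient in the whole lemma.

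\emph{Explicit formula.} I would prove the two bounds on $\nu^{\mf{a}}_f(p^e)$ separately. The upper bound is immediate from the strict inequality just established: $\nu^{\mf{a}}_f(p^e)<p^e\cpt{\mf{a}}(f)$ together with the integrality of $\nu^{\mf{a}}_f(p^e)$ give $\nu^{\mf{a}}_f(p^e)\leq\ceil{p^e\cpt{\mf{a}}(f)}-1$. For the lower bound I would use the principal-ideal form of \eqref{eq:v(p^e)/p^e} recorded at the end of the proof of Lemma \ref{lem:vin^I_J Noe}: iterating in $k$ gives $f^{p^k(\nu^{\mf{a}}_f(p^e)+1)}\in\mf{a}\ppm{e+k}$, so $\nu^{\mf{a}}_f(p^{e+k})+1\leq p^k(\nu^{\mf{a}}_f(p^e)+1)$. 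Dividing by $p^{e+k}$ and letting $k\to\infty$ yields $\cpt{\mf{a}}(f)\leq (\nu^{\mf{a}}_f(p^e)+1)/p^e$, i.e. $\nu^{\mf{a}}_f(p^e)\geq p^e\cpt{\mf{a}}(f)-1$; integrality then gives $\nu^{\mf{a}}_f(p^e)\geq \ceil{p^e\cpt{\mf{a}}(f)}-1$, which together with the upper bound closes the argument.
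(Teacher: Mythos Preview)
Your proof is correct and follows essentially the same path as the paper's: monotonicity via Lemma~\ref{lem:regular prop}(1), the strict inequality by contradiction through tight closure (Hochster--Huneke) using an element of $I\cap R^\circ$ as test element, and part~(2) by sandwiching $\nu^{\mf{a}}_f(p^e)$ between $p^e\cpt{\mf{a}}(f)-1$ and $p^e\cpt{\mf{a}}(f)$ using Lemma~\ref{lem:vin^I_J Noe} (with $\mu(I)=1$) and part~(1). The only cosmetic difference is that the paper concludes $I^{\nJI{e_0}}\subseteq\mf{a}\ppm{e_0}$ for the full ideal, while you derive the contradiction from the single element $g$ you fixed at the outset; both are equivalent here.
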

	
	\begin{proof}
		We have $\nJI{e+1}\geq p\nJI{e}$. The inequality is a consequence of Lemma \ref{lem:regular prop}, (1). Hence
		\begin{equation}\label{eq:nuin nonde}
			\frac{\nJI{e}}{p^e}\leq\frac{\nJI{e+1}}{p^{e+1}}\quad\text{and}\quad\frac{\nJI{e}}{p^e}\leq \cpt{\mf{a}}(I)
		\end{equation}
		as claimed. To prove (1), if $p^{e_0}\cpt{J}(I)=\nJI{e_0}$ for some $e_0$, then by \eqref{eq:nuin nonde}, for all $e\in \nt$, there is
		\begin{equation*}
			\nJI{e_0+e}=p^e\nJI{e_0}\quad\text{and}\quad I(I^{\nJI{e_0}})^{p^e}\subseteq \mf{a}\ppm{e+e_0}=(\mf{a}\ppm{e_0})\ppm{e}.
		\end{equation*}
		Choose $u\in I\cap R^\circ$, for any $g\in I^{\nJI{e_0}}$, $ug^{p^{e}}\in(\mf{a}\ppm{e_0})\ppm{e}$ for all $e\in\nt$. This leads to $g\in \mf{a}\ppm{e_0}$. Hence
		\begin{equation*}
			I^{\nJI{e_0}}\subseteq \mf{a}\ppm{e_0}.
		\end{equation*}
		A contradiction, since every ideal in a regular ring of characteristic $p$ is tighted closed (\cite{HH90}). Then for any $f\in \sqrt{\mf{a}}$, Lemma \ref{lem:vin^I_J Noe} and (1) say that for all $e\in\nt$, there is
		\begin{equation*}
			\frac{\nu^{\mf{a}}_f(p^e)}{p^e}<\cpt{\mf{a}}(f)\leq \frac{\nu^{\mf{a}}_f(p^e)+1}{p^e}.
		\end{equation*}
		This finishes the proof of (2).
	\end{proof}
	
	\begin{definition}\label{def:test ideals}
		For an ideal $I$ of $R$ and a non-negative real number $c\in\mb{R}_{\geq 0}$, the generalized test ideal of $I$ with exponent $c$ is defined as
		\begin{equation*}
			\tau(I,c)=\bcup{e\in\nt}(I^{\ceil{cp^e}})\ppdm{e}=(I^{\ceil{cp^{e_0}}})\ppdm{e_0}\;\text{for all sufficiently large}\;e_0.
		\end{equation*}
		The second equality is based on the fact that the ascending chain
		\begin{equation*}
			\cdots\subseteq(I^{\ceil{cp^e}})\ppdm{e}\subseteq(I^{\ceil{cp^{e+1}}})\ppdm{e+1}\subseteq\cdots
		\end{equation*}
		is stationary, see \cite[Lemma 2.8]{BMS08}.
	\end{definition}
	
	\begin{definition}\label{def:Fjp}
		The family of test ideals $\{\tau(I,c)\}_{c\in\mb{R}_{\geq 0}}$ is right continuous (\cite[Corollary 2.16]{BMS08}). For any $c\geq 0$, there exists an $\varepsilon>0$ such that $\tau(I,c)=\tau(I,c')$ for all $c'\in [c,c+\varepsilon)$. $\lambda\geq 0$ is called an $F$-jumping exponent of $I$ if $\tau(I,\lambda-\varepsilon)\supsetneqq\tau(I,\lambda)$ for all $\varepsilon\in(0,\lambda]$.
	\end{definition}
	
	The set of all $F$-jumping exponents of $I$ is denoted by $\textbf{FJ}(I)$. $0$ is always an $F$-jumping exponent. $\textbf{FJ}(I)$ is a discrete subset contained in $\Q$, see \cite[Theorem 3.1]{BMS08}. When $I=\ideala{f}$ is a principal ideal, we denote $\tau(\ideala{f},c)$ and $\textbf{FJ}(\ideala{f})$ by $\tau(f,c)$ and $\textbf{FJ}(f)$, respectively.
	
	\begin{lemma}[\text{[\textcolor{blue}{BMS08}, Corollary 2.30], [\textcolor{blue}{QG21b}, Proposition 4.2]}]\label{lem:FJ and Fthe}
		Fix an ideal $I$ of $R$.
		\begin{enumerate}
			\item $\nu^\bullet_I(p^e)$, $\nu$-invarints of level $e$ for $I$, is equal to $\mc{B}^\bullet_I(p^e)=\{n\in\nt:\dle{R}\cdot I^n\supsetneqq\dle{R}\cdot I^{n+1}\}$. We call $\mc{B}^\bullet_I(p^e)$ the set of different jumps of level $e$ for $I$.
			\item $\textbf{FJ}(I)$, the set of $F$-jumping exponents of $I$, is equal to $\{\cpt{\mf{a}}(I):R\supsetneqq\sqrt{\mf{a}}\supseteq I\}$.
		\end{enumerate}
	\end{lemma}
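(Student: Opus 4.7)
My plan rests on a common reduction using Lemma \ref{lem:regular prop}: the identity $\dle{R}\cdot I^n = (I^n)\ppdm{e}\ppm{e}$, together with the cancellation law $J_1\ppm{e}\supsetneqq J_2\ppm{e}$ iff $J_1\supsetneqq J_2$, immediately shows that $n\in\mc{B}^\bullet_I(p^e)$ is equivalent to $(I^n)\ppdm{e}\supsetneqq(I^{n+1})\ppdm{e}$. This rewriting is the starting point for both parts.

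For part (1), I would prove both inclusions by constructing explicit witnesses. Given $n$ with $(I^n)\ppdm{e}\supsetneqq(I^{n+1})\ppdm{e}$, set $\mf{a} := (I^{n+1})\ppdm{e}$: the chain $I^{n+1}\subseteq\mf{a}\ppm{e}\subseteq\mf{a}$ forces $I\subseteq\sqrt{\mf{a}}$, the strict containment rules out $\mf{a}=R$ and also rules out $I^n\subseteq\mf{a}\ppm{e}$ (otherwise $(I^n)\ppdm{e}\subseteq\mf{a}$ by the defining minimality, contradicting the hypothesis), so $\nJI{e}=n$. Conversely, given admissible $\mf{a}$ with $\nJI{e}=n$, the inclusion $I^{n+1}\subseteq\mf{a}\ppm{e}$ gives $(I^{n+1})\ppdm{e}\subseteq\mf{a}$ by minimality, while $I^n\not\subseteq\mf{a}\ppm{e}$ gives $(I^n)\ppdm{e}\not\subseteq\mf{a}$; hence the two $\ppdm{e}$-ideals differ strictly.

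For part (2), I would use the stabilization $\tau(I,c)=(I^{\lceil cp^e\rceil})\ppdm{e}$ valid for $e\gg 0$ (Definition \ref{def:test ideals}). Given an admissible $\mf{a}$ and setting $\lambda := \cpt{\mf{a}}(I)$, Lemma \ref{lem:F-thre in reg}(1) supplies $\nJI{e}/p^e<\lambda$, so $\lceil\lambda p^e\rceil\geq\nJI{e}+1$ and $\tau(I,\lambda)\subseteq\mf{a}$ for $e$ large; meanwhile $\nJI{e}/p^e\to\lambda$ yields $\nJI{e}\geq\lceil(\lambda-\epsilon)p^e\rceil$ eventually for any $\epsilon>0$, so $\tau(I,\lambda-\epsilon)\not\subseteq\mf{a}$, and this separation forces $\lambda\in\textbf{FJ}(I)$. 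For the reverse, given $\lambda\in\textbf{FJ}(I)$ with $\lambda>0$, take $\mf{a} := \tau(I,\lambda)$: the chain $I^{\lceil\lambda p^e\rceil}\subseteq\mf{a}\ppm{e}\subseteq\mf{a}$ yields $I\subseteq\sqrt{\mf{a}}$, and $\mf{a}\neq R$ follows directly from the jumping hypothesis (otherwise $\tau(I,\lambda-\epsilon)\supsetneqq R$ would be impossible). The stabilization gives $\nJI{e}\leq\lceil\lambda p^e\rceil-1$, hence $\limsup\nJI{e}/p^e\leq\lambda$; for the matching lower bound I would exploit the discreteness of $\textbf{FJ}(I)$ in $\Q$ to ensure that $\tau(I,\lambda-\epsilon)\supsetneqq\mf{a}$ for $\epsilon$ smaller than the gap below $\lambda$, which via the $(\cdot)\ppdm{e}$-$(\cdot)\ppm{e}$ correspondence forces $\nJI{e}\geq\lceil(\lambda-\epsilon)p^e\rceil$; letting $\epsilon\to 0$ finishes.

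The main obstacle I anticipate is the converse direction of part (2): one must carefully arrange the $\epsilon$-approximation so that $\tau(I,\lambda-\epsilon)$ genuinely separates from $\tau(I,\lambda)=\mf{a}$, and this relies crucially on the discreteness of $\textbf{FJ}(I)$. The rest is bookkeeping through Lemma \ref{lem:regular prop} and the defining property of generalized test ideals.
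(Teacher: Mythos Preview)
The paper does not supply its own proof of this lemma: it is quoted directly from \cite{BMS08} and \cite{Qge21} as a citation result, so there is nothing to compare your argument against. Your proof sketch is the standard one underlying those references and is essentially correct.

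Two small remarks. First, in the forward direction of part~(2) you invoke Lemma~\ref{lem:F-thre in reg}(1) to get the strict inequality $\nJI{e}/p^e<\lambda$, but that part of the lemma carries the side hypothesis $I\cap R^\circ\neq\varnothing$, which is not assumed here. You can bypass this: from the non-decreasing convergence alone you get $\nJI{e}\leq\lambda p^e$, hence $I^{\lceil(\lambda+\epsilon')p^e\rceil}\subseteq\mf{a}\ppm{e}$ for every $\epsilon'>0$ and $e$ large, so $\tau(I,\lambda+\epsilon')\subseteq\mf{a}$; then right-continuity of test ideals (Definition~\ref{def:Fjp}) gives $\tau(I,\lambda)\subseteq\mf{a}$ without needing strictness. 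Second, your appeal to discreteness of $\textbf{FJ}(I)$ in the reverse direction is unnecessary: the very definition of an $F$-jumping exponent already guarantees $\tau(I,\lambda-\epsilon)\supsetneqq\tau(I,\lambda)$ for \emph{every} $\epsilon\in(0,\lambda]$, so you may run your $\liminf$ argument for each fixed $\epsilon$ and then let $\epsilon\to 0$ directly. With these tweaks the argument is clean; the only residual edge case is the trivial exponent $\lambda=0$, which is a matter of convention in the cited sources rather than a substantive gap.
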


	\section{Continuous functions on $\zp$}\label{sec:cont zp}
	
	The concept of the algebra of continuous functions on $\zp$ (or $\zp^r$) is a perfect tool to discribe the \Nbs functional equations in positive characteristic.
	Let $\zp$ be the ring of $p$-adic integers with the $p$-adic metric $|\cdot|_p$. Fix an integer $e\in\nt$, for a $p$-adic integer $\mc{x}=\ssm{i\in\nt}\mc{x}^{[i]}p^i$, we call $\tre{\mc{x}}=\sum^{e-1}_{i=0}\mc{x}^{[i]}p^i$ the $e$-th truncation of $\mc{x}$. Every integer $a\in\nt$ can be uniquely written as a finite sum $a=\sum_{i\in\nt}a^{[i]}p^i$ with $a^{[i]}\in\{0,\dots,p-1\}$ and $a^{[i]}=0$ for all sufficiently large $i$.
	
	\begin{definition}
		Let $T$ be an $\fp$-algebra. We equip $T$ with the discrete topology. Let $\cut{T}$ be the set of\\
		continuous maps from $\zp$ to $T$. For any $e\in\nt$, let $\ceut{T}$ be the subset of $\cut{T}$ that consists of those maps $\Phi$, where
		\begin{equation*}
			\Phi(\mc{x})=\Phi(\mc{y}),\quad\text{whenever}\;\mc{x}-\mc{y}\in p^e\zp.
		\end{equation*}
		$\ceut{T}$ and $\cut{T}$ admit the $T$-algebra structures by pointwise addition and multiplication.
	\end{definition}
	
	\begin{remark}
		Note that $T$ is discrete, $\zp$ is a compact metrizable space, so any continuous map $\Phi\in\cut{T}$ is uniformly locally constant. There exists an $\varepsilon>0$ such that 
		\begin{equation*}
			\Phi(\mc{x})=\Phi(\mc{y}),\quad\text{whenever}\;|\mc{x}-\mc{y}|_p<\varepsilon.
		\end{equation*}
		Then $\Phi\in\ceut{T}$ when $e>-\log_p\varepsilon$. This implies that $\cut{T}=\cup\ud{e\in\nt}\ceut{T}$.
	\end{remark}
	
	We will need the following facts.
	\begin{enumerate}
		\item $T\cong\textbf{C}^0(\zp,T)$, to each element of $T$ we associate the corresponding constant map.
		\item For any $\fp$-subalgebra $S\subseteq T$, $\cut{S}$ can be naturally viewed as an $\fp$-subalgebra of $\cut{T}$.
		\item For any $e\in\nt$, $\ceut{\fp}\ott{\fp}T\cong\ceut{T}$. Every map in $\ceut{T}$ is an $\fp$-linear combination of the maps of the form $\Z{F}\cdot v$, where $\Z{F}\in\ceut{\fp}\subseteq\ceut{T}$ and $v\in T=\textbf{C}^0(\zp,T)$.
	\end{enumerate}
	
	\begin{lemma}\label{lem:basis of cfp}
		Fix an integer $e\in\nt$. For $k\in \textbf{N}_1(e)$, i.e., $k=\{0,1,\dots,p^e-1\}$, we define the functions 
		\begin{equation*}
			\Z{B}_k:\zp\to\fp,\quad \mc{x}\mapsto\binom{\tre{\mc{x}}}{k}\bmod p.
		\end{equation*}
		They make up an $\fp$-basis of the $\fp$-vector space $\ceut{\fp}$.
	\end{lemma}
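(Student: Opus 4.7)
The plan is to reduce the problem to a finite-dimensional linear algebra statement by exploiting the fact that every element of $\ceut{\fp}$ factors through the finite quotient $\zp/p^e\zp$, and then to verify linear independence via an explicit triangular evaluation matrix.

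First I would observe that the map $\zp\to\z/p^e\z$ sending $\mc{x}$ to $\tre{\mc{x}}\bmod p^e$ realizes $\ceut{\fp}$ as the space of all functions $\z/p^e\z\to\fp$: by definition, any $\Phi\in\ceut{\fp}$ depends only on $\tre{\mc{x}}$, and conversely every such function lifts to an element of $\ceut{\fp}$. Consequently the evaluation map
\begin{equation*}
\mr{ev}:\ceut{\fp}\longrightarrow \fp^{p^e},\qquad \Phi\longmapsto \bigl(\Phi(0),\Phi(1),\dots,\Phi(p^e-1)\bigr)
\end{equation*}
is an $\fp$-linear isomorphism, so $\dim_{\fp}\ceut{\fp}=p^e$. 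Since the index set $\textbf{N}_1(e)=\{0,1,\dots,p^e-1\}$ also has cardinality $p^e$, it suffices to prove that the family $\{\Z{B}_k\}_{k\in\textbf{N}_1(e)}$ is $\fp$-linearly independent.

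Next I would compute $\mr{ev}(\Z{B}_k)$. For $0\leq i\leq p^e-1$ the $e$-th truncation satisfies $\tre{i}=i$, so $\mr{ev}(\Z{B}_k)_i=\binom{i}{k}\bmod p$. Arranging these vectors as the columns of a matrix $M\in\mr{Mat}_{p^e}(\fp)$ with entries $M_{i,k}=\binom{i}{k}\bmod p$, we have $M_{i,k}=0$ whenever $k>i$ and $M_{i,i}=\binom{i}{i}=1$. Hence $M$ is lower triangular with $1$'s on the diagonal and is therefore invertible over $\fp$; this forces the $\mr{ev}(\Z{B}_k)$ to be $\fp$-linearly independent, and hence so are the $\Z{B}_k$ themselves. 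Combined with the dimension count from the previous step, this yields that $\{\Z{B}_k\}_{k\in\textbf{N}_1(e)}$ is an $\fp$-basis of $\ceut{\fp}$.

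The argument is largely mechanical once the reduction to $\z/p^e\z$ is made; the only conceptual point is that each $\Z{B}_k$ is genuinely well-defined as an element of $\ceut{\fp}$, which is immediate from the fact that $\binom{\tre{\mc{x}}}{k}$ only depends on $\tre{\mc{x}}$, together with the observation that for $0\leq k\leq p^e-1$ the values $\binom{\tre{\mc{x}}}{k}\bmod p$ are well-defined integers modulo $p$ (no subtlety from Lucas's theorem is required here since we are evaluating at integers $\tre{\mc{x}}\in\{0,1,\dots,p^e-1\}$). I expect no real obstacle beyond setting up the evaluation isomorphism cleanly.
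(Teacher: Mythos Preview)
Your proof is correct and follows the same overall structure as the paper's: both reduce to the identification of $\ceut{\fp}$ with the space of $p^e$-periodic $\fp$-valued functions on $\z$ (equivalently, functions $\z/p^e\z\to\fp$), which forces $\dim_{\fp}\ceut{\fp}=p^e$, and then establish that the $\Z{B}_k$ are a basis. The difference lies in this last step: the paper simply cites a reference (\cite{gtm198}) for the fact that the binomial polynomials $a\mapsto\binom{a}{k}\bmod p$ form a basis of the $p^e$-periodic functions, whereas you supply a direct elementary argument via the lower-triangular evaluation matrix $M_{i,k}=\binom{i}{k}\bmod p$. Your version is thus more self-contained and avoids an external citation; the paper's version is terser but leans on the literature. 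Neither approach requires Lucas's theorem for the core argument, though the paper implicitly uses it to identify the restriction of $\Z{B}_k$ to $\z$ with the full binomial polynomial $\binom{\bullet}{k}$ rather than $\binom{\tre{\bullet}}{k}$.
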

	
	\begin{proof}
		Since $\z$ is dense in $\zp$, there is a one-to-one correspondence between $\ceut{\fp}$ and the $p^e$-periodic
		functions from $\z$ to $\fp$. It can be expressed as follows.
		\begin{itemize}
			\item For any $\Z{F}\in\ceut{\fp}$. The restriction of $\Z{F}\in\ceut{\fp}$ on $\z$ is a $p^e$-periodic function.
			\item Every $p^e$-periodic function from $\z$ to $\fp$ can be uniquely extended to a continuous function in $\ceut{\fp}$. 
		\end{itemize}
		
		Note that the restriction of $\Z{B}_k$ on $\z$ is the binomial polynomial $\binom{\bullet}{k}:a\in\z\mapsto \binom{a}{k}\bmod p$. They make up an $\fp$-basis of the space of $p^e$-periodic map from $\z$ to $\fp$ \cite{gtm198}. We obtain the desired conclusion.
	\end{proof}

	Suppose that $a=\sum^n_{i=0}a^{[i]}p^i$ and $k=\sum^{e-1}_{i=0}k^{[i]}p^i$. We set $k^{[i]}=0$ when $i\geq e$, then by Lucas's theorem,
	\begin{equation}\label{eq:luc}
		\binom{a}{k}\equiv\prod^{l}_{i=0}\binom{a^{[i]}}{k^{[i]}}\bmod p.
	\end{equation}
	So for each $j\in\nt$, $\Z{B}_{p^j}(\ssm{i\in\nt}\mc{x}^{[i]}p^i)=\mc{x}^{[j]}\bmod p$. Since $\nt$ is also dense in $\zp$, the following identity holds.
	\begin{equation*}
		\Z{B}_k=\prod\ud{k^{[i]}\neq 0}\frac{1}{k^{[i]}!}\,\Z{B}_{p^i}(\Z{B}_{p^i}-1)\cdots(\Z{B}_{p^i}-k^{[i]}+1).
	\end{equation*}
	Because both sides take the same values on $\nt$ by \eqref{eq:luc}.
	Here we view $k^{[i]}$ as elements in $\fp$. Hence $\ceut{\fp}$ is an $\fp$-algebra generated by $\{\Z{B}_{p^i}\}_{i=0}^{e-1}$ for all $e\in\nt$.
	
	Bitoun \cite[Theorem 1.1.8]{Bit18} proved that there is a bijection from the $p$-adic integers to the set of maximal ideals $\spmc \cut{\fp}$ of $\cut{\fp}$ given by
	\begin{equation*}
		\mc{x}\mapsto \mc{I}(\mc{x})=\{\Z{F}\in\cut{\fp}:F(\mc{x})=0\}=\ideala{\Z{B}_{p^i}-\mc{x}^{[i]}:i\in\nt}.
	\end{equation*}
	Moreover, $\cut{\fp}$ is a Jacobson ring. Each ideal $\mf{P}$ of $\cut{\fp}$ is characterized by $\bv{\mf{P}}$, the set of roots of $\mf{P}$ defined by $\{\mc{x}\in\zp:\Z{F}(\mc{x})=0,\;\text{for all}\;\Z{F}\in\mf{P}\}$. More precisely, 
	\begin{equation}\label{eq:nulls}
		\mf{P}=\bcap{\mc{x}\in \bv{\mf{P}}}\mc{I}(\mc{x}).
	\end{equation}
	
	\subsection{The $\cut{\fp}$-modules}
	
	Let $M$ be a $\cut{\fp}$-module. Given a $p$-adic integer $\mc{x}$, define $M\kud{\mc{x}}=\{u\in M:\mc{I}(\mc{x})\cdot u=0\}$ and $M_{\mc{x}}$ to be the quotient $M/\mc{I}(\mc{x})M$. For every $\mc{x}\notin\bv{\mr{ann}_{\cut{\fp}}\,M}$, there is an $\Z{F}\in\mr{ann}_{\cut{\fp}}\,M$ 
	which does not vanish at $\mc{x}$. Hence $M=(\Z{F}(\mc{x}))^{-1}(\Z{F}-\Z{F}(\mc{x}))\cdot M=\mc{I}(\mc{x})M$, $M_{\mc{x}}=0$. The following lemma is useful.
	
	\begin{lemma}[\text{[\textcolor{blue}{JNnBQG23}, Lemma 2.21]}]\label{lem:Mx=0}
		If $M_{\mc{x}}=0$ for all $\mc{x}\in\zp$, then $M=0$.
	\end{lemma}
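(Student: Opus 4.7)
The plan is to prove the lemma by a partition-of-unity argument, exploiting the fact that every element of $\cut{\fp}$ is locally constant (since $\fp$ carries the discrete topology) combined with compactness of $\zp$.

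Fix an arbitrary $u\in M$; I aim to show $u=0$. By hypothesis, for each $\mc{x}\in\zp$ we have $M_{\mc{x}}=0$, i.e.\ $u\in\mc{I}(\mc{x})\cdot M$. So there exist finitely many $\Z{F}^{\mc{x}}_1,\dots,\Z{F}^{\mc{x}}_{n_{\mc{x}}}\in\mc{I}(\mc{x})$ and $v^{\mc{x}}_1,\dots,v^{\mc{x}}_{n_{\mc{x}}}\in M$ with
\begin{equation*}
u=\ssm{i=1}^{n_{\mc{x}}}\Z{F}^{\mc{x}}_i\cdot v^{\mc{x}}_i.
\end{equation*}
Each $\Z{F}^{\mc{x}}_i$ is continuous with $\fp$ discrete, so it is locally constant. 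Since $\Z{F}^{\mc{x}}_i(\mc{x})=0$, there is a clopen neighborhood of $\mc{x}$ on which $\Z{F}^{\mc{x}}_i$ is identically zero. Intersecting the finitely many such neighborhoods over $i=1,\dots,n_{\mc{x}}$ yields a single clopen neighborhood $U_{\mc{x}}\ni\mc{x}$ on which \emph{all} the $\Z{F}^{\mc{x}}_i$ vanish.

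Now $\zp$ is compact, so the open cover $\{U_{\mc{x}}\}_{\mc{x}\in\zp}$ has a finite subcover $U_{\mc{x}_1},\dots,U_{\mc{x}_m}$. Refining in the standard way (letting $V_j=U_{\mc{x}_j}\setminus\bcup{k<j}U_{\mc{x}_k}$), I obtain a partition of $\zp$ into finitely many clopen sets $V_1,\dots,V_m$, each contained in some $U_{\mc{x}_j}$. The indicator functions $\Z{\mathbf{1}}_{V_j}$ lie in $\cut{\fp}$ and satisfy $\ssm{j=1}^m\Z{\mathbf{1}}_{V_j}=1$ in $\cut{\fp}$.

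Multiplying by each idempotent, I write $u=\ssm{j=1}^{m}\Z{\mathbf{1}}_{V_j}\cdot u$. For a fixed $j$, substituting the expression for $u$ coming from $\mc{x}_j$ gives
\begin{equation*}
\Z{\mathbf{1}}_{V_j}\cdot u=\ssm{i=1}^{n_{\mc{x}_j}}\bigl(\Z{\mathbf{1}}_{V_j}\cdot\Z{F}^{\mc{x}_j}_i\bigr)\cdot v^{\mc{x}_j}_i.
\end{equation*}
But $\Z{\mathbf{1}}_{V_j}\cdot\Z{F}^{\mc{x}_j}_i$ is zero outside $V_j$ (because of $\Z{\mathbf{1}}_{V_j}$) and also zero on $V_j\subseteq U_{\mc{x}_j}$ (because $\Z{F}^{\mc{x}_j}_i$ vanishes on $U_{\mc{x}_j}$), so it is the zero function in $\cut{\fp}$. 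Hence each $\Z{\mathbf{1}}_{V_j}\cdot u=0$, and therefore $u=0$.

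The only real idea is recognizing that local constancy upgrades the pointwise condition ``$\Z{F}^{\mc{x}}_i(\mc{x})=0$'' to vanishing on a full clopen neighborhood, after which compactness of $\zp$ supplies the finite partition needed to globalize. I don't anticipate any genuine obstacle; the argument is essentially a Nakayama-style patching, and no hypothesis (such as finite generation of $M$) is needed because the locally constant structure already provides the requisite idempotents.
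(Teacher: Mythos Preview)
The paper does not supply its own proof of this lemma; it is quoted from \cite{JNQ23} without argument, so there is nothing to compare against at the level of strategy.

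Your proof is correct and self-contained. The key observation---that continuity into the discrete space $\fp$ forces local constancy, so that vanishing at a point upgrades to vanishing on a clopen neighborhood---is exactly what makes the compactness/partition-of-unity argument go through. One minor remark: in the paper's notation your indicator functions $\Z{\mathbf{1}}_{V_j}$ can be taken to be finite sums of the explicit idempotents $\Z{L}^e_k$ for a sufficiently large common level $e$ (since each $\Z{F}^{\mc{x}_j}_i$ lies in some $\ceut{\fp}$ and the cosets $k+p^e\zp$ form a base of clopen sets), which is how one would likely phrase the same argument using the structures already set up in the paper. This is only a cosmetic difference.
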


	\begin{proposition}\label{pro: zero sets}
		Assume that there are only finitely many $p$-adic integers $\{\mc{x}_1,\dots,\mc{x}_n\}$ such that $M_{\mc{x}_i}\neq 0$.
		\begin{enumerate}
			\item $\bv{\mr{ann}_{\cut{\fp}}\,M}=\{\mc{x}_1,\dots,\mc{x}_n\}$.
			\item We have a decomposition $M=\bigoplus^n_{i=1}M\kud{\mc{x}_i}$ and $M\kud{\mc{x}_i}\cong M_{\mc{x}_i}$.
		\end{enumerate}
	\end{proposition}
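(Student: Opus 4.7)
My plan is to reduce the statement to a finite combinatorial fact about idempotents in $\ceut{\fp}$ for a sufficiently large level $e$, and then split $M$ accordingly. Throughout, I will freely use that the $\mc{I}(\mc{y})$ are exactly the maximal ideals of $\cut{\fp}$ and that $\ceut{\fp}$ is (by Lemma \ref{lem:basis of cfp}) the algebra of $\fp$-valued functions on $\z/p^e\z$, hence a product of $p^e$ copies of $\fp$ with orthogonal idempotents $\{e^{(e)}_a\}_{a=0}^{p^e-1}$, where $e^{(e)}_a$ is the characteristic function of $\{\mc{z}\in\zp:\tre{\mc{z}}=a\}$.

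For part (1), the inclusion $\{\mc{x}_1,\dots,\mc{x}_n\}\subseteq\bv{\mr{ann}_{\cut{\fp}}M}$ is the contrapositive of the observation preceding the proposition. For the reverse inclusion I fix $\mc{y}\notin\{\mc{x}_1,\dots,\mc{x}_n\}$ and choose $e$ so large that $\tre{\mc{y}}$ is different from each $\tre{\mc{x}_i}$; then the idempotent $\Z{E}:=e^{(e)}_{\tre{\mc{y}}}\in\ceut{\fp}$ satisfies $\Z{E}(\mc{y})=1$ and $\Z{E}\in\mc{I}(\mc{x}_i)$ for every $i$. I claim $\Z{E}$ annihilates $M$: for any $\mc{z}\in\zp$, either $\tre{\mc{z}}\neq\tre{\mc{y}}$, in which case $\Z{E}\in\mc{I}(\mc{z})$ already kills $M_{\mc{z}}$, or $\tre{\mc{z}}=\tre{\mc{y}}$ and then $\mc{z}\notin\{\mc{x}_1,\dots,\mc{x}_n\}$, so $M_\mc{z}=0$ by hypothesis. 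Either way $(\Z{E}M)_\mc{z}=0$, and Lemma \ref{lem:Mx=0} gives $\Z{E}M=0$. Thus $\mc{y}\notin\bv{\mr{ann}_{\cut{\fp}}M}$.

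For part (2) I pick $e$ large enough so that $\tre{\mc{x}_1},\dots,\tre{\mc{x}_n}$ are pairwise distinct, and put $\Z{E}_i:=e^{(e)}_{\tre{\mc{x}_i}}$. These are orthogonal idempotents with $\Z{E}_i(\mc{x}_j)=\delta_{ij}$. The argument of the previous paragraph, applied to each idempotent $e^{(e)}_a$ with $a\notin\{\tre{\mc{x}_1},\dots,\tre{\mc{x}_n}\}$, shows that the complementary idempotent $1-\sum_{i=1}^n\Z{E}_i=\sum_{a\neq\tre{\mc{x}_i}}e^{(e)}_a$ annihilates $M$, so $\sum_i\Z{E}_i$ acts as the identity on $M$ and hence $M=\bigoplus_{i=1}^n\Z{E}_iM$. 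To identify $\Z{E}_iM$ with $M\kud{\mc{x}_i}$, I note that for $\Z{G}\in\mc{I}(\mc{x}_i)$ the product $\Z{G}\Z{E}_i$ vanishes at every $\mc{x}_j$ (at $\mc{x}_i$ because $\Z{G}$ vanishes, at $\mc{x}_j$ with $j\neq i$ because $\Z{E}_i$ does), so $\Z{G}\Z{E}_i\in\bigcap_j\mc{I}(\mc{x}_j)=\mr{ann}_{\cut{\fp}}M$ by \eqref{eq:nulls} and part (1); hence $\Z{E}_iM\subseteq M\kud{\mc{x}_i}$. Conversely, $1-\Z{E}_i\in\mc{I}(\mc{x}_i)$ kills every $u\in M\kud{\mc{x}_i}$, so $u=\Z{E}_iu\in\Z{E}_iM$. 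Finally, since $\mc{I}(\mc{x}_i)\Z{E}_iM=0$ and $\Z{E}_jM\subseteq\mc{I}(\mc{x}_i)M$ for $j\neq i$ (because $\Z{E}_j\in\mc{I}(\mc{x}_i)$), the decomposition $M=\bigoplus_j\Z{E}_jM$ gives $\mc{I}(\mc{x}_i)M=\bigoplus_{j\neq i}\Z{E}_jM$, so $M_{\mc{x}_i}=M/\mc{I}(\mc{x}_i)M\cong\Z{E}_iM=M\kud{\mc{x}_i}$.

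The one delicate point I expect to need care is the proof that $\Z{E}M=0$ for the "bad" idempotents: the $\cut{\fp}$-module $\Z{E}M$ is not a priori of the form $M/\mc{I}(\mc{z})M$ for any single $\mc{z}$, so one really does need to invoke Lemma \ref{lem:Mx=0} and check the local vanishing $(\Z{E}M)_\mc{z}=0$ at every $\mc{z}\in\zp$, splitting into the two cases according to whether $\tre{\mc{z}}=\tre{\mc{y}}$ or not. Everything after this step is formal manipulation with orthogonal idempotents.
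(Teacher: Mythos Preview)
Your proof is correct and follows essentially the same route as the paper: both arguments use Lemma \ref{lem:Mx=0} on submodules of the form $\Z{F}M$ and then decompose $M$ via the characteristic-function idempotents at a sufficiently fine level $e$ (your $e^{(e)}_a$ is exactly the paper's $\Z{L}^e_{\mc{x}}$). The only cosmetic difference is that the paper proves $\bigcap_i\mc{I}(\mc{x}_i)\subseteq\ann M$ directly for arbitrary $\Z{F}$ via the identity $\Z{F}^p=\Z{F}$ in $\cut{\fp}$, whereas you first handle the idempotent case (where $\Z{E}^2=\Z{E}$ does the same job) and then recover the general statement by invoking \eqref{eq:nulls}.
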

	
	\begin{proof}
		We already have $\bv{\ann{\cut{\fp}}\,M}\supseteq\{\mc{x}_1,\dots,\mc{x}_n\}$. To prove (1), we only need to show that 
		\begin{equation}\label{eq:m(x) and ann}
			\bigcap^n_{i=1}\mc{I}(\mc{x}_i)\subseteq\mr{ann}_{\cut{\fp}}\,M.
		\end{equation}
		
		Take any $\Z{F}\in\cut{\fp}$ vanishing on all $\mc{x}_i$. For those $\mc{x}\notin\{\mc{x}_1,\dots,\mc{x}_n\}$, $M=\mc{I}(\mc{x})M$, so
		\begin{equation*}
			\Z{F}\cdot M=\mc{I}(\mc{x})(\Z{F}\cdot M)\quad\text{and}\quad(\Z{F}\cdot M)_{\mc{x}}=0.
		\end{equation*}
		Note that $\Z{F}\in\mc{I}(\mc{x}_i)$, $\Z{F}\cdot M=\Z{F}^p\cdot M=(\Z{F}^{p-1})\cdot (\Z{F}\cdot M)\subseteq \mc{I}(\mc{x}_i)(\Z{F}\cdot M)$. Hence $(\Z{F}\cdot M)_{\mc{x}}=0$ for all $\mc{x}\in\zp$. By Lemma \ref{lem:Mx=0}, the
		$\cut{\fp}$-module $\Z{F}\cdot M=0$, thus $\Z{F}\in\ann{\cut{\fp}}\,M$. Based on \eqref{eq:m(x) and ann}, there is
		\begin{equation*}
			\{\mc{x}_1,\dots,\mc{x}_n\}=\mb{V}\left(\bigcap^n_{i=1}\mc{I}(\mc{x}_i)\right)\supseteq \bv{\ann{\cut{\fp}}\,M}.
		\end{equation*}
		The first equality holds because for any $\mc{x}\notin\{\mc{x}_1,\dots,\mc{x}_n\}$, choose an integer $e\in\nt$ such that $\tre{\mc{x}}\neq\tre{\mc{x}_i}$ for all $i$. Let $\Z{L}^e_{\mc{x}}\in\ceut{\fp}$ be the function sending $\mc{y}$ to 1 whenever $\tre{\mc{y}}=\tre{\mc{x}}$ and to 0 otherwise. Then $\Z{L}^e_{\mc{x}}$ vanishes on all $\mc{x}_i$ but $\Z{L}^e_{\mc{x}}(\mc{x})=1$. So $\mc{x}\notin\bv{\cap^n_{i=1}\mc{I}(\mc{x}_i)}$.
		
		To prove (2), choose an integer $e\in\nt$ such that $\{\tre{\mc{x}_i}\}_{i=1}^n\in \textbf{N}_1(e)$ are different from each other. Note that $\Z{L}^e_{\mc{x}_i}$ vanishes on $\mc{x}_j$ for all $j\neq i$. Functions of $\mc{I}(\mc{x}_i)\cdot \Z{L}^e_{\mc{x}_i}$ vanish on all $\mc{x}_j$, so $\mc{I}(\mc{x}_i)\cdot \Z{L}^e_{\mc{x}_i}\subseteq\mr{ann}_{\cut{\fp}}\,M$. 
		We obtain a canonical homomorphism
		\begin{equation*}
			M_{\mc{x}_i}\to M\kud{\mc{x}_i},\quad u+\mc{I}(\mc{x}_i)M\mapsto \Z{L}^e_{\mc{x}_i}\cdot u.
		\end{equation*}
		It is injective, since $\Z{L}^e_{\mc{x}_i}-1\in\mc{I}(\mc{x}_i)$, if $\Z{L}^e_{\mc{x}_i}\cdot u\in\mc{I}(\mc{x}_i)M$, then $u=-(\Z{L}^e_{\mc{x}_i}-1)\cdot u+\Z{L}^e_{\mc{x}_i}\cdot u\in\mc{I}(\mc{x}_i)M$. And it is also surjective, for any $u\in M\kud{\mc{x}_i}$, $\Z{L}^e_{\mc{x}_i}\cdot \Z{L}^e_{\mc{x}_i}\cdot u=\Z{L}^e_{\mc{x}_i}\cdot u=u$. Hence $M\kud{\mc{x}_i}\cong M_{\mc{x}_i}$. 
		
		The decomposition $M=\bigoplus^n_{i=1}M\kud{\mc{x}_i}$ is given by $u=\sum^n_{i=1}\Z{L}^e_{\mc{x}_i}\cdot u$. It is the composition of the canonical isomorphism
		\begin{equation*}
			M=\frac{M}{(\ann{\cut{\fp}}\,M)M}=\frac{M}{(\bigcap^n_{i=1}\mc{I}(\mc{x}_i))M}\cong\bigoplus^n_{i=1}M_{\mc{x}_i}
		\end{equation*}
		and the isomorphisms $M_{\mc{x}_i}\cong M\kud{\mc{x}_i}$ we given above. 
	\end{proof}

	\begin{corollary}\label{cor:week decom}
		Assume that there are $p$-adic integers $\{\mc{x}_1,\dots,\mc{x}_n\}$ such that $M$ splits as
		\begin{equation*}
			M=\bigoplus^n_{i=1}M'_i,\quad\text{with}\;0\neq M'_i\subseteq M\kud{\mc{x}_i}.
		\end{equation*}
		Then $M'_i=M\kud{\mc{x}_i}\cong M_{\mc{x}_i}$.
	\end{corollary}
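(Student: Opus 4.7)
The plan is to reduce the statement to Proposition \ref{pro: zero sets} by first establishing that $M_{\mc{x}}=0$ for every $\mc{x}\in\zp$ outside the list $\{\mc{x}_1,\dots,\mc{x}_n\}$. Once this is known, the proposition supplies a canonical decomposition of $M$ in terms of the nonzero multi-eigenspaces $M\kud{\mc{x}_j}$, and matching that decomposition against the given one $M=\bigoplus M'_i$ with $M'_i\subseteq M\kud{\mc{x}_i}$ will force $M'_i=M\kud{\mc{x}_i}$ for every $i$.

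For the vanishing step I would exploit the idempotent-like functions $\Z{L}^e_{\mc{x}}\in\ceut{\fp}$ introduced in the proof of Proposition \ref{pro: zero sets}. Fix $\mc{x}\notin\{\mc{x}_1,\dots,\mc{x}_n\}$ and choose $e\in\nt$ so large that the truncations $\tre{\mc{x}},\tre{\mc{x}_1},\dots,\tre{\mc{x}_n}$ are pairwise distinct. Then $\Z{L}^e_{\mc{x}}(\mc{x}_j)=0$ for every $j$, so $\Z{L}^e_{\mc{x}}\in\mc{I}(\mc{x}_j)$, and consequently $\Z{L}^e_{\mc{x}}$ annihilates each $M'_j\subseteq M\kud{\mc{x}_j}$. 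Summing over $j$ using the hypothesized decomposition yields $\Z{L}^e_{\mc{x}}\cdot M=0$, so $(1-\Z{L}^e_{\mc{x}})$ acts as the identity on $M$. Since $1-\Z{L}^e_{\mc{x}}\in\mc{I}(\mc{x})$, this gives $M=\mc{I}(\mc{x})M$, i.e.\ $M_{\mc{x}}=0$.

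With this established, only finitely many $\mc{x}\in\zp$ can satisfy $M_{\mc{x}}\neq 0$, and the set of such $\mc{x}$ is contained in $\{\mc{x}_1,\dots,\mc{x}_n\}$. Proposition \ref{pro: zero sets} applies and yields $M=\bigoplus_{M_{\mc{x}_j}\neq 0}M\kud{\mc{x}_j}$ together with isomorphisms $M\kud{\mc{x}_j}\cong M_{\mc{x}_j}$ for the contributing indices. If some $M_{\mc{x}_j}$ were zero, the isomorphism would force $M\kud{\mc{x}_j}=0$, contradicting $0\neq M'_j\subseteq M\kud{\mc{x}_j}$; hence every index contributes and $M=\bigoplus_{i=1}^n M\kud{\mc{x}_i}$. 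Finally, for any $u\in M\kud{\mc{x}_i}$ write $u=\sum_j u'_j$ with $u'_j\in M'_j\subseteq M\kud{\mc{x}_j}$; the uniqueness of the decomposition $M=\bigoplus_j M\kud{\mc{x}_j}$ forces $u'_j=0$ for $j\neq i$ and $u'_i=u$, whence $u\in M'_i$. Combined with the reverse containment this yields $M'_i=M\kud{\mc{x}_i}\cong M_{\mc{x}_i}$.

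The main obstacle I anticipate is the vanishing step: one has to use that $M$ is a direct sum of pieces each lying in a single multi-eigenspace (not merely spanned by such eigenvectors) in order to propagate the local annihilation of $\Z{L}^e_{\mc{x}}$ from each summand to the whole of $M$. After that, the argument reduces to bookkeeping with Proposition \ref{pro: zero sets} and uniqueness of direct sum decompositions.
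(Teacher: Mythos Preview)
Your argument is correct and follows the same route as the paper: both use the idempotents $\Z{L}^e_{\mc{x}}$ to show $M_{\mc{x}}=0$ for $\mc{x}\notin\{\mc{x}_1,\dots,\mc{x}_n\}$ and then invoke Proposition~\ref{pro: zero sets}. The only difference is ordering: the paper first computes $M_{\mc{x}_i}\cong M'_i$ directly, by observing that $\mc{I}(\mc{x}_i)M'_i=0$ while $\mc{I}(\mc{x}_i)M'_j=M'_j$ for $j\neq i$ (since $1-\Z{L}^e_{\mc{x}_j}\in\mc{I}(\mc{x}_j)$ kills $M'_j$ and $\Z{L}^e_{\mc{x}_j}\in\mc{I}(\mc{x}_i)$), so that every $\mc{x}_i$ is already known to satisfy $M_{\mc{x}_i}\neq 0$ before the proposition is applied. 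This bypasses your contradiction step, whose phrasing ``the isomorphism would force $M\kud{\mc{x}_j}=0$'' is slightly loose as written, because Proposition~\ref{pro: zero sets} only supplies the isomorphism $M\kud{\mc{y}}\cong M_{\mc{y}}$ at the contributing points; it becomes rigorous once you note that $M\kud{\mc{x}_i}\cap M\kud{\mc{x}_j}=0$ for $i\neq j$ (the maximal ideals $\mc{I}(\mc{x}_i)$ and $\mc{I}(\mc{x}_j)$ are comaximal), so any non-contributing $\mc{x}_j$ would indeed have $M\kud{\mc{x}_j}=0$.
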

	
	\begin{proof}
		Choose an integer $e\in\nt$ such that $\{\tre{\mc{x}_i}\}_{i=1}^n$ are different from each other. When $j\neq i$, $-\Z{L}^e_{\mc{x}_j}+1\in\mc{I}(\mc{x}_j)$, $\Z{L}^e_{\mc{x}_j}\in\mc{I}(\mc{x}_i)$, thus $\mc{I}(\mc{x}_j)M'_i=M'_i$. Clearly $\mc{I}(\mc{x}_i)M'_i=0$. Hence $M_{\mc{x}_i}\cong M'_i$. 
		
		For those $\mc{x}\notin\{\mc{x}_1,\dots,\mc{x}_n\}$, using the same methods above, we see that $\mc{I}(\mc{x})M'_i=M'_i$ for all $i$, $M_{\mc{x}}=0$. The set of $p$-adic number $\mc{x}$ such that $M_{\mc{x}}\neq 0$ is just $\{\mc{x}_1,\dots,\mc{x}_n\}$. Then we apply Proposition \ref{pro: zero sets}.
	\end{proof}
	
	\subsection{Continuous functions on $\zp^r$}
	In the same way, fix an integer $r\in\nz$. Let $T$ be an $\fp$-algebra. We equip $T$ with the discrete topology and $\zp^r$ with the product topology. The $T$-algebra of continuous maps from $\zp^r$ to $T$ is denoted by $\rcut{T}$. For any $e\in\nt$, let $\rceut{T}$ be the $T$-subalgebra of $\rcut{T}$ that consists of those continuous maps $\Phi$ from $\zp^r$ to $T$ such that
	\begin{equation*}
		\Phi(\mc{x}_1,\dots,\mc{x}_r)=\Phi(\mc{y}_1,\dots,\mc{y}_r),\quad\text{whenever}\;(\mc{x}_1,\dots,\mc{x}_r)-(\mc{y}_1,\dots,\mc{y}_r)\in p^e\zp^r.
	\end{equation*}
	Similarly, we have $\rcut{T}=\bcup{e\in\nt}\rceut{T}$, and $\rceut{T}=\rceut{\fp}\ott{\fp}T$ for each $e\in\nt$. The facts we mentioned at the beginning of this section still hold for general $r\in\nz$.
	
	Fix an integer $e\in\nt$, the functions
	\begin{equation}\label{eq:functions of Bk}
		\Z{B}_{\B{k}}:\zp^r\to\fp,\quad (\mc{x}_1,\dots,\mc{x}_r)\mapsto\prod^r_{i=1}\binom{\tre{\mc{x}_i}}{k_i}\bmod p,\quad \B{k}=(k_1,\dots,k_r)\in N_r(e),
	\end{equation}
	make up an $\fp$-basis of the $\fp$-vector space $\rceut{\fp}$. Another $\fp$-basis of $\rceut{\fp}$ is given by
	\begin{equation*}
		\Z{L}^e_{\B{k}}:\zp^r\to \fp, (\mc{x}_1,\dots,\mc{x}_r)\mapsto\begin{cases}
			1,&\text{if}\;(\tre{\mc{x}_1},\dots\tre{\mc{x}_r})=\B{k},\\
			0,&\text{otherwise}.
		\end{cases}\quad \B{k}\in N_r(e).
	\end{equation*}
	
	\begin{lemma}\label{lem:Cfp to DR}
		Let $R$ be an $F$-finite ring and $R[\B{t}]$ be the polynomial ring $R[t_1,\dots,t_r]$. We have an injective
		$D_R$-algebra homomorphism
		\begin{equation*}
			\Delta\ud{r,R}:\rcut{D_R}\to (D_{R[\B{t}]})_0,\quad \Delta\ud{r,R}(\Z{P}):u\B{t}^{\B{a}}\mapsto\Z{P}(-\B{a}-\B{1}_r)(u)\B{t}^{\B{a}}.
		\end{equation*}
		Here we note that $\Z{P}\in\rcut{D_R}$, so $\Z{P}(-\B{a}-\B{1}_r)$ is a \Ndpo on $R$.
	\end{lemma}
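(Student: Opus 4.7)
The plan is to give $\Delta\ud{r,R}$ an explicit description in terms of the $D_R$-generators of $(D_{R[\B{t}]})_0$ supplied by Lemma \ref{lem:V-fil in pc}, and then read off all the required properties from that description. Since $\rcut{D_R}=\bcup{e\in\nt}\rceut{D_R}$, it is enough to work at a fixed level. Pick $e\in\nt$ and write $\Z{P}\in\rceut{D_R}\cong\rceut{\fp}\ott{\fp}D_R$ uniquely as $\Z{P}=\ssm{\B{k}\in\textbf{N}_r(e)}\Z{B}_{\B{k}}\otimes\phi_{\B{k}}$ in the $\fp$-basis from \eqref{eq:functions of Bk}, with $\phi_{\B{k}}\in D_R$.

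The key computation is the evaluation of $\Z{B}_{\B{k}}$ at the point $-\B{a}-\B{1}_r\in\zp^r$. The integer identity $\binom{-m-1}{k}=(-1)^{k}\binom{m+k}{k}$, combined with the fact that $\binom{n}{k}\bmod p$ depends only on $n\bmod p^{e}$ when $k<p^{e}$ (Lucas's theorem), yields
\[
\Z{B}_{\B{k}}(-\B{a}-\B{1}_r)\equiv (-1)^{|\B{k}|}\binom{\B{a}+\B{k}}{\B{k}}\pmod p.
\]
Unwinding the divided-power definition, $\phi_{\B{k}}\,\dpot{\B{k}}\B{t}^{\B{k}}(u\B{t}^{\B{a}})=\binom{\B{a}+\B{k}}{\B{k}}\phi_{\B{k}}(u)\B{t}^{\B{a}}$. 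Matching monomial-by-monomial therefore identifies
\[
\Delta\ud{r,R}(\Z{P})=\ssm{\B{k}\in\textbf{N}_r(e)}(-1)^{|\B{k}|}\phi_{\B{k}}\,\dpot{\B{k}}\B{t}^{\B{k}},
\]
which lies in $(D_{R[\B{t}]})_0$ by Lemma \ref{lem:V-fil in pc}. This proves that $\Delta\ud{r,R}(\Z{P})$ is well-defined and lands in the asserted submodule.

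Additivity and $D_R$-linearity are immediate from the pointwise formula. For multiplicativity, both $\Delta\ud{r,R}(\Z{P})$ and $\Delta\ud{r,R}(\Z{Q})$ preserve the decomposition $R[\B{t}]=\bops{\B{a}\in\nt^{r}}R\B{t}^{\B{a}}$, so it suffices to check on a monomial:
\[
\Delta\ud{r,R}(\Z{P})\Delta\ud{r,R}(\Z{Q})(u\B{t}^{\B{a}}) = \Z{P}(-\B{a}-\B{1}_r)\bigl(\Z{Q}(-\B{a}-\B{1}_r)(u)\bigr)\B{t}^{\B{a}} = \Delta\ud{r,R}(\Z{P}\Z{Q})(u\B{t}^{\B{a}}).
\]

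For injectivity, suppose $\Delta\ud{r,R}(\Z{P})=0$. Then $\Z{P}(-\B{a}-\B{1}_r)$ annihilates all of $R$ for every $\B{a}\in\nt^{r}$, so $\Z{P}$ vanishes on $\{-\B{a}-\B{1}_r:\B{a}\in\nt^{r}\}$. This set is dense in $\zp^{r}$ since every coset of $p^{e}\zp^{r}$ contains some $-\B{a}-\B{1}_r$, and $\Z{P}$ is continuous with discrete target, so $\Z{P}\equiv 0$. The only delicate point in the whole argument is the Lucas-type evaluation of $\Z{B}_{\B{k}}(-\B{a}-\B{1}_r)$: one must check that the truncation $\tre{-a_i-1}$ appearing in the definition of $\Z{B}_{\B{k}}$ reproduces the integer identity $(-1)^{k_i}\binom{a_i+k_i}{k_i}$ modulo $p$. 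Once that sign is secured, everything else is formal bookkeeping.
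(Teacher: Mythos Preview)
Your proof is correct and follows essentially the same route as the paper. The paper's proof simply says ``it is easy to check that $\Delta\ud{r,R}$ is a $D_R$-algebra homomorphism'' and then gives the same density argument for injectivity; your explicit identification $\Delta\ud{r,R}(\Z{B}_{\B{k}}\cdot\phi)=(-1)^{|\B{k}|}\phi\,\dpot{\B{k}}\B{t}^{\B{k}}$ is exactly what the paper records immediately after the proof, so you have just unpacked the ``easy to check'' step.
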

	
	\begin{proof}
		It is easy to check that $\Delta\ud{r,R}$ is a $D_R$-algebra homomorphism. There is $\Delta\ud{r,R}(\ceut{\dle{R}})\subseteq\dle{R[\B{t}]}$. Now we prove that it is injective. For any $\Z{P}\in\knr\Delta\ud{r,R}$, assume that $\Z{P}\in\rceut{D_R}$, then $\Z{P}(\B{b})=0$ when $\B{b}\in\{-\B{1}_r-\B{a}:\B{a}\in\nt^r\}+p^e\zp^r\supseteq\nt^r$. $\Z{P}$ vanishes on a dense subset of $\zp^r$, thus $\Z{P}=0$.  
	\end{proof}
	
	For every $\B{k}\in N_r(e)$ and $\phi\in D_R$, $\Delta\ud{r,R}(\Z{B}_{\B{k}}\cdot\phi)=\phi\,(-1)^{|\B{k}|}\dpot{\B{k}}\B{t}^{\B{k}}$, and
	\begin{equation}\label{eq:action of Lek}
		\Delta\ud{r,R}(\Z{L}^e_{\B{k}}\cdot\phi):u\B{t}^{\B{a}}\mapsto\begin{cases}
			\phi(u)\B{t}^{\B{a}},&\text{if}\;\B{a}\in(p^e-1)\B{1}_r-\B{k}+p^e\nt^r,\\
			0,&\text{otherwise}.
		\end{cases}
	\end{equation}
	Composed with the injective $D_R$-algebra homomorphism
	\begin{equation}\label{eq:Sigma}
		\Sigma\ud{r,R}:\cut{D_R}\to\rcut{D_R},\quad \Sigma\ud{r,R}(\Z{F}):(\mc{x}_1,\dots,\mc{x}_r)\mapsto\Z{F}(\mc{x}_1+\cdots+\mc{x}_r).
	\end{equation}
	We obtain an injective $D_R$-algebra homomorphism $\Delta'\ud{r,R}:\cut{D_R}\to(D_{R[\B{t}]})_0$ sending $\Z{B}_k$ to Euler operator $\vartheta\ud{\B{t},k}$. This means that any $(D_{R[\B{t}]})_0$-module $M$ can be viewed as a $\cut{\fp}$-module through $\Delta'_{r,R}$ (Here we use the fact that $\cut{\fp}\subseteq\cut{D_R}$).
	
	\section{Bernstein-Sato theory in positive characteristic}\label{sec: BS theory}
	In this section ,we let $R$ be an $F$-finite ring of prime characteristic $p$, and let $I$ be a non-zero ideal of $R$. Choose a system of generators $\mc{F}=\{f_i\}_{i=1}^r$ of $I$. Suppose that $\B{f}=f_1\cdots f_r$ is a non-zero divisor of $R$. 
	
	\subsection{Local cohomomology construction of Bernstein-Sato roots}
	
	\begin{definition}
		A $p$-adic integer $\mc{x}\in\zp$ is a \Nbs root of $I$ if there exists a sequence $\{\nu\ud{e}\}_{e\in\nt}$ 
		with $\nu\ud{e}\in\mc{B}^\bullet_I(p^e)=\{n\in\nt:\dle{R}\cdot I^n\supsetneqq\dle{R}\cdot I^{n+1}\}$ such that $\mc{x}$ is a $p$-adic limit of $\{\nu\ud{e}\}_{e\in\nt}$. $\textbf{BSR}(I)$ denotes the set of all \Nbs roots of $I$. If $I=\ideala{f}$ is a principal ideal, we write $\textbf{BSR}(f)$ instead of $\textbf{BSR}(\ideala{f})$.
	\end{definition}
	
	One of the obvious problems of our definition of \Nbs roots is that it is hard to compute these roots and study their properties. To deal with this problem, Jeffries, N\'{u}\~{n}ez-Betancourt and
	Quinlan-Gallego provided the following results.
	We first recall the following notations introduced in Section \ref{sec:intro}.
	
	\begin{Setup}
		Consider the polynomial ring $R[\B{t}]=R[t_1,\dots,t_r]$. Let $\ideala{\B{t}}=\ideala{t_1,\dots,t_r}$ and $J=\ideala{f_i-t_i:1\leq i\leq r}$ be ideals of $R[\B{t}]$. The local cohomology module $H_{\mc{F}}=H^r_JR[\B{t}]$ (see \eqref{eq:local coho}) is a free $R[\B{t}]$-module with the basis
		\begin{equation*}
			\delta\ud{\B{k}}=\left[\frac{1}{\prod^r_{i=1}(f_i-t_i)^{k_i}}\right],\quad\B{k}=(k_1,\dots,k_r)\in(\nz)^r.
		\end{equation*}
		$H_{\mc{F}}$ carries a $D_{R[\B{t}]}$-module structure. For any $\varphi\in\dlr{R[\B{t}]}$ and $\B{k}\in(\nz)^r$, $\varphi\cdot\delta\ud{\B{k}}=\varphi(\prod^r_{i=1}(f_i-t_i)^{(p^e-1)k_i})\delta\ud{p^e\B{k}}$.\\
		Let $\delta=\delta\ud{\B{1}_r}$. Then $f_i\delta=t_i\delta$. Let $N_{\mc{F}}$ be the $(D_{R[\B{t}]})_0$-module
		\begin{equation}\label{eq:N_I}
			N_{\mc{F}}=\frac{V^0D_{R[\B{t}]}\cdot \delta}{V^1D_{R[\B{t}]}\cdot \delta}=\frac{(D_{R[\B{t}]})_0\cdot\delta}{(D_{R[\B{t}]})_1\cdot\delta},
		\end{equation}	
		where the second equality comes from Lemma \ref{eq:Vfil and deg} and the fact that when $j>i$,
		\begin{equation*}
			(D_{R[\B{t}]})_j\cdot\delta=(D_{R[\B{t}]})_i\cdot\ideala{\B{t}}^{j-i}\cdot \delta=(D_{R[\B{t}]})_i\cdot I^{j-i}\cdot \delta\subseteq (D_{R[\B{t}]})_i\cdot\delta.
		\end{equation*}
		$N_{\mc{F}}$ defined above depends on the choice of generators.
	\end{Setup}
	
	\begin{Setup}
		As $f_i$ is invertible in $R_{\B{f}}$, for each $\B{a}=(a_1,\dots,a_r)\in\z^r$, we let $\B{f}^{\B{a}}=\prod\ud{a_i<0}(f_i^{-1})^{-a_i}\prod\ud{a_i\geq 0}f_i^{a_i}\in R_{\B{f}}$.
		Let $J'$ be the ideal $JR_{\B{f}}[\B{t}]$ of $R_{\B{f}}[\B{t}]=R_{\B{f}}[t_1,\dots,t_r]$, and let $H'_{\mc{F}}$ denote the local cohomology module $H^r_{J'}R_{\B{f}}[\B{t}]$. It is a free $R_{\B{f}}[\B{t}]$-module with the basis
		\begin{equation*}
			\delta'_{\B{k}}=\left[\frac{1}{\prod^r_{i=1}(f_i-t_i)^{k_i}}\right],\quad\B{k}=(k_1,\dots,k_r)\in(\nz)^r.
		\end{equation*}
		Let $\delta'=\delta'_{\B{1}_r}$. We have a $D_{R[\B{t}]}$-linear embedding $\iota:H_{\mc{F}}\to H'_{\mc{F}}$ sending $\delta_{\B{k}}$ to $\delta'_{\B{k}}$.
		
		Let $(H'_{\mc{F}})^e$ be the free $R_{\B{f}}[t]$-submodule of $H'_{\mc{F}}$ generated by $\{\delta'_{\B{k}}:\B{k}\in(\nz)^r,\B{k}\leq p^e\B{1}_r\}$. Note that
		\begin{equation}\label{eq:decom of delta}
			\delta'_{\B{k}}=\left(\prod^r_{i=1}(f_i-t_i)^{p^e-k_i}\right)\delta'_{p^e\B{1}_r}=\ssm{\B{a}\in\nt^r,\B{a}+\B{k}\leq p^e\B{1}_r}(-1)^{|\B{a}|}\Z{B}_{\B{a}}(p^e\B{1}_r-\B{k})\,\B{f}^{\B{a}}\B{t}^{p^e\B{1}_r-\B{k}-\B{a}}\delta'_{p^e\B{1}_r},
		\end{equation}
		from the decomposition we see that $\{Q_{e,\B{k}}:=\B{t}^{(p^e-1)\B{1}_r-\B{k}}\delta'_{p^e\B{1}_r}\}_{\B{k}\in N_r(e)}$ is another $R_{\B{f}}[\B{t}]$-basis of $(H'_{\mc{F}})^e$.
	\end{Setup}

	\begin{theorem}[\text{[\textcolor{blue}{JNnBQG23}, Theorem 4.13, Theorem 4.16, Theorem 7.3]}]\label{thm:ration}
		By the homomorphism $\Delta'\ud{r,R}:\cut{D_R}\to (D_{R[\B{t}]})_0$ given in Lemma \ref{lem:Cfp to DR}, $N_\mc{F}$ admits a $\cut{\fp}$-module structure. A $p$-adic integer $\mc{x}$ is a \Nbs root of $I$ if and only if the quotient module
		\begin{equation*}
			(N_\mc{F})_{\mc{x}}=\frac{N_{\mc{F}}}{\mc{I}(\mc{x})N_\mc{F}}\neq 0,\quad\text{where}\;\mc{I}(\mc{x})=\{\Z{F}\in\cut{\fp}:\Z{F}(\mc{x})=0\}.
		\end{equation*}
		When $R$ is a regular, $I$ has finitely many \Nbs roots. All of them are negative and rational,  hence contained in $\z\kud{p}=\{\frac{a}{b}\in\Q:p\nmid b\}$.
	\end{theorem}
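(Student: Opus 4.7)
The plan is to split the theorem into three parts and handle them in sequence. First, the $\cut{\fp}$-module structure on $N_\mc{F}$ is essentially formal. By Lemma \ref{lem:Cfp to DR}, $\Delta'\ud{r,R}$ is an injective $D_R$-algebra homomorphism from $\cut{D_R}$ to $(D_{R[\B{t}]})_0$, sending each basis function $\Z{B}_{p^i} \in \cut{\fp}$ to the higher Euler operator $\eul{\B{t},p^i}$. Since $N_\mc{F}$ is a $(D_{R[\B{t}]})_0$-module by its very definition in \eqref{eq:N_I}, restriction of scalars along $\Delta'\ud{r,R}$ gives $N_\mc{F}$ the promised $\cut{\fp}$-module structure, in which $\Z{B}_{p^i}$ acts as $\eul{\B{t},p^i}$.

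Next I would prove the equivalence between being a Bernstein-Sato root and the nonvanishing of $(N_\mc{F})_\mc{x}$. The strategy is to filter $N_\mc{F}$ by the images of $\dle{R[\B{t}]}\cdot\delta$ as $e$ varies, and then to grade the level-$e$ piece by the integer $n$ appearing as the total degree in $\B{t}$ in the expansion \eqref{eq:decom of delta}. A direct computation, using Lemma \ref{lem:regular prop}(2) to identify $\dle{R}\cdot I^n/\dle{R}\cdot I^{n+1}$ with the appropriate graded piece, shows that nonvanishing at level $n$ corresponds exactly to $n\in\mc{B}^\bullet_I(p^e)$. Then, using the formula $\eul{\B{t},p^j}(\B{t}^{\B{b}})=\binom{-|\B{b}|-r}{p^j}\B{t}^{\B{b}}$ together with Lucas's congruence \eqref{eq:luc}, I would show that $\eul{\B{t},p^j}$ acts on the graded piece indexed by $n$ as multiplication by the $j$-th base-$p$ digit of $-n-1$. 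This digit-by-digit correspondence identifies the $p$-adic integers $\mc{x}$ with $(N_\mc{F})_\mc{x}\neq 0$ with the $p$-adic limits of sequences $\nu\ud{e}\in\mc{B}^\bullet_I(p^e)$, which is exactly the definition of $\textbf{BSR}(I)$.

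For the final claims in the regular case, I would combine the above characterization with the identifications in Lemma \ref{lem:FJ and Fthe}: part (1) gives $\mc{B}^\bullet_I(p^e)=\ninv{I}(p^e)$, and part (2) together with Lemma \ref{lem:F-thre in reg}(2) gives $\nu^{\mf{a}}_f(p^e)=\lceil p^e\cpt{\mf{a}}(f)\rceil-1$ for each $\mf{a}$ with $f\in\sqrt{\mf{a}}$. Since $\textbf{FJ}(I)$ is a discrete rational subset of $\mb{R}\ud{\geq 0}$ with denominators coprime to $p$ by \cite{BMS08}, the $p$-adic limits of sequences of the form $\lceil p^e c\rceil-1$ for $c$ ranging over finitely many thresholds in a bounded interval form a finite subset of $-\z\kud{p}$, yielding simultaneously finiteness, negativity, rationality, and containment in $\z\kud{p}$.

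The main obstacle will be the digit-by-digit analysis of the action of the higher Euler operators on the graded pieces of $N_\mc{F}$ in the middle step: this requires compatible choices of representatives, a careful application of \eqref{eq:decom of delta} to reduce the action on $N_\mc{F}$ to a computation in $(D_{R[\B{t}]})_0$ modulo $(D_{R[\B{t}]})_1\cdot\delta$, and a clean use of Lucas's theorem to extract each digit. Once this correspondence is in place, both the characterization of Bernstein-Sato roots and the finiteness/rationality statements in the regular case follow by direct translation.
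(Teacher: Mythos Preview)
The paper does not prove this theorem at all: it is stated with an explicit citation to \cite{JNQ23} (Theorems 4.13, 4.16, 7.3) and is used as a black box. So there is no ``paper's own proof'' to compare against; any argument you supply goes beyond what the present paper does.

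That said, your sketch has real content in the middle step and a genuine gap in the last. For the characterization $(N_\mc{F})_\mc{x}\neq 0 \Leftrightarrow \mc{x}\in\textbf{BSR}(I)$, filtering by level $e$ and grading by the $\B{t}$-degree is the right idea, and the eigenvalue computation via $\eul{\B{t},p^j}(\B{t}^{\B{b}})=\binom{-|\B{b}|-r}{p^j}\B{t}^{\B{b}}$ plus Lucas is correct. Be careful, though: you invoke Lemma~\ref{lem:regular prop}(2) for this step, but Section~\ref{sec: BS theory} works over an $F$-finite ring that is not assumed regular until the last sentence of the theorem; the identification of the graded pieces with $\dle{R}\cdot I^n/\dle{R}\cdot I^{n+1}$ must be done without that lemma.

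The gap is in your finiteness/rationality argument. You write $\nu^{\mf{a}}_f(p^e)=\lceil p^e\cpt{\mf{a}}(f)\rceil-1$, but Lemma~\ref{lem:F-thre in reg}(2) is only stated (and only true in this clean form) for principal ideals, whereas the theorem concerns an arbitrary ideal $I$. You also assert that $\textbf{FJ}(I)$ has denominators coprime to $p$; this is false in general (and is essentially what one is trying to prove about $\textbf{BSR}(I)$, not an input). The actual rationality proof in \cite{JNQ23} is more delicate than a direct translation through $F$-thresholds, and your sketch as written does not close this gap.
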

	
	This theorem is a characteristic $p$ analogue of Kashiwara's rationality theorem. By applying Proposition \ref{pro: zero sets}, we obtain the following corollary.
	\begin{corollary}\label{cor:Basic pro of BS-roots}
		Suppose that $R$ is a regular $F$-finite ring. $I=\ideala{f_1,\dots,f_r}$. Let $N_{\mc{F}}$ be the $\cut{\fp}$-module defined in \eqref{eq:N_I}.
		\begin{enumerate}
			\item $\textbf{BSR}(I)=\bv{\ann{\cut{\fp}}\,N_\mc{F}}$.
			\item A $p$-adic integer $\mc{x}=\ssm{i\in\nt}\mc{x}^{[i]}p^i\in\textbf{BSR}(I)$ if and only if the multi-eigenspace
			\begin{equation*}
				(N_\mc{F})\kud{\mc{x}}=\{u\in N_I:\eul{\B{t},p^i}\cdot u=\mc{x}^{[i]}u\;\text{for all}\;i\in\nt\}\neq 0.
			\end{equation*}
			\item If $\textbf{BSR}(I)=\{\mc{x}_1,\dots,\mc{x}_n\}$, then $N_{\mc{F}}$ admits a multi-eigenspace decomposition $N_\mc{F}=\bigoplus^n_{i=1}(N_\mc{F})\kud{\mc{x}_i}$.
		\end{enumerate}
	\end{corollary}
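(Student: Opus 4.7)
The plan is to deduce all three parts of the corollary as direct consequences of Theorem \ref{thm:ration} together with Proposition \ref{pro: zero sets}, once the $\cut{\fp}$-module language and the Euler-operator language are reconciled.

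First I would verify the dictionary between the two descriptions of the multi-eigenspace. Recall that $\mc{I}(\mc{x})=\ideala{\Z{B}_{p^i}-\mc{x}^{[i]}:i\in\nt}$ in $\cut{\fp}$, and the $\cut{\fp}$-action on $N_{\mc{F}}$ is obtained via $\Delta'\ud{r,R}:\cut{D_R}\to(D_{R[\B{t}]})_0$, under which $\Z{B}_{p^i}$ is sent to the higher Euler operator $\eul{\B{t},p^i}$ (this is the computation recorded just after \eqref{eq:Sigma}). Hence, for the $\cut{\fp}$-module $N_{\mc{F}}$, the abstract definition $(N_{\mc{F}})\kud{\mc{x}}=\{u\in N_{\mc{F}}:\mc{I}(\mc{x})\cdot u=0\}$ used in Section 3.1 coincides on the nose with the multi-eigenspace $\{u\in N_{\mc{F}}:\eul{\B{t},p^i}\cdot u=\mc{x}^{[i]}u\text{ for all }i\}$ appearing in the statement.

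Next I would invoke Theorem \ref{thm:ration}: in the regular $F$-finite case, $\textbf{BSR}(I)$ is a finite set $\{\mc{x}_1,\dots,\mc{x}_n\}$, and a $p$-adic integer $\mc{x}$ lies in $\textbf{BSR}(I)$ if and only if $(N_{\mc{F}})_{\mc{x}}\neq 0$. Consequently the hypothesis of Proposition \ref{pro: zero sets} is satisfied for the $\cut{\fp}$-module $M=N_{\mc{F}}$, with exactly the Bernstein-Sato roots as the finite list of $\mc{x}_i$ with $M_{\mc{x}_i}\neq 0$.

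At this point part (1) is immediate from Proposition \ref{pro: zero sets}(1): $\bv{\ann{\cut{\fp}}\,N_{\mc{F}}}=\{\mc{x}_1,\dots,\mc{x}_n\}=\textbf{BSR}(I)$. For part (3), Proposition \ref{pro: zero sets}(2) furnishes the direct-sum decomposition $N_{\mc{F}}=\bigoplus_{i=1}^{n}(N_{\mc{F}})\kud{\mc{x}_i}$, which by the dictionary of the first step is precisely the multi-eigenspace decomposition claimed. For part (2), the same proposition gives $(N_{\mc{F}})\kud{\mc{x}_i}\cong(N_{\mc{F}})_{\mc{x}_i}$ for each $i$, while $(N_{\mc{F}})\kud{\mc{x}}=0$ for $\mc{x}\notin\textbf{BSR}(I)$ (since $(N_{\mc{F}})\kud{\mc{x}}$ injects into some $(N_{\mc{F}})_{\mc{x}}$, which vanishes by Theorem \ref{thm:ration}). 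Combining these, $(N_{\mc{F}})\kud{\mc{x}}\neq 0$ if and only if $\mc{x}\in\textbf{BSR}(I)$, which is exactly (2).

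There is essentially no real obstacle here: the corollary is a packaging result. The only point that requires a moment of care is the identification of the Euler-operator multi-eigenspace with the $\cut{\fp}$-annihilator-style eigenspace, and this is handled once one recalls the formula $\Delta'\ud{r,R}(\Z{B}_{p^i})=\eul{\B{t},p^i}$. Everything else is a direct application of Proposition \ref{pro: zero sets} to the finite zero-set furnished by Theorem \ref{thm:ration}.
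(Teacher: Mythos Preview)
Your proposal is correct and follows exactly the approach the paper intends: the paper's entire proof is the single sentence ``By applying Proposition \ref{pro: zero sets}, we obtain the following corollary,'' and you have correctly unpacked this by combining Theorem \ref{thm:ration} (finiteness of $\textbf{BSR}(I)$ and the criterion $(N_{\mc{F}})_{\mc{x}}\neq 0$) with Proposition \ref{pro: zero sets}, together with the identification $\Delta'_{r,R}(\Z{B}_{p^i})=\eul{\B{t},p^i}$ that translates the abstract $\cut{\fp}$-module eigenspace into the Euler-operator eigenspace. Your justification that $(N_{\mc{F}})\kud{\mc{x}}$ injects into $(N_{\mc{F}})_{\mc{x}}$ is in fact valid for any $\cut{\fp}$-module (using that $\mc{I}(\mc{x})$ is a directed union of ideals generated by the idempotents $1-\Z{L}^e_{\mc{x}}$), so the argument for part (2) goes through as written.
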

	
	Quinlan-Gallego established a close relationship between \Nbs roots and $F$-jumping exponents. 
	
	\begin{theorem}[\text{[\textcolor{blue}{QG21b}, Theorem 6.11]}]\label{thm:FJ and BSR}
		Keep the notation above. The cosets of $-\textbf{FJ}(I)\cap\z_{(p)}$ in $\z$ concides with the cosets of $\textbf{BSR}(I)$ in $\z$, i.e., $\textbf{BSR}(I)+\z=-\textbf{FJ}(I)\cap\z_{(p)}+\z$. Furthermore, for a non-zero element $f$ of $R$, $\textbf{BSR}(f)=\{-\lambda:\lambda\in\textbf{FJ}(f)\cap(0,1]\cap\z_{(p)}\}$.
	\end{theorem}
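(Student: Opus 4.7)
The plan is to bridge both sides of the asserted equality through the common quantities $\nu^{\mf{a}}_I(p^e)$. By Lemma \ref{lem:FJ and Fthe}(2), every $F$-jumping exponent arises as an \emph{archimedean} limit $\cpt{\mf{a}}(I)=\lim_{e\to\infty}\nu^{\mf{a}}_I(p^e)/p^e$, while by Lemma \ref{lem:FJ and Fthe}(1) and the definition of $\textbf{BSR}(I)$, every Bernstein--Sato root is a \emph{$p$-adic} accumulation point of integers in $\nu^\bullet_I(p^e)$. The technical core is a ``limit-exchange'' principle: if a sequence of non-negative integers $\nu_e$ satisfies $\nu_e/p^e\to c\in\z\kud{p}$ archimedean and $\nu_e\to\mc{x}$ $p$-adically (along some subsequence), then $\mc{x}+c\in\z$. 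This is checked by writing $c=a/b$ with $p\nmid b$, so that $b\nu_e-p^ea$ is an integer sequence of size $o(p^e)$ whose $p$-adic limit equals $b\mc{x}$, since $p^ea\to 0$ $p$-adically.

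For the forward inclusion $\textbf{BSR}(I)+\z\subseteq-\textbf{FJ}(I)\cap\z\kud{p}+\z$, I would take $\mc{x}\in\textbf{BSR}(I)$ with a witnessing sequence $\nu^{(e)}=\nu^{\mf{a}_e}_I(p^e)\to\mc{x}$ $p$-adically. Lemma \ref{lem:vin^I_J Noe} bounds $\nu^{(e)}/p^e$, so a subsequence converges archimedean to some $c\geq 0$. The key step, and the main technical obstacle, is to upgrade $c$ from a mere limit of $F$-thresholds to an actual $F$-jumping exponent: for this I would specialize to a principal $f\in I\cap R^\circ$ and invoke the explicit formula $\nu^{\mf{a}}_f(p^e)=\lceil p^e\cpt{\mf{a}}(f)\rceil-1$ from Lemma \ref{lem:F-thre in reg}(2), combined with the discreteness of $\textbf{FJ}(I)$ (which forces the limit $c$ of nearby $F$-thresholds to itself be an $F$-threshold). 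The limit-exchange principle then yields $\mc{x}+c\in\z$ with $c\in\textbf{FJ}(I)\cap\z\kud{p}$.

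For the reverse inclusion, given $\lambda=\cpt{\mf{a}}(I)\in\textbf{FJ}(I)\cap\z\kud{p}$, compactness of $\zp$ extracts a $p$-adic convergent subsequence of $\nu^{\mf{a}}_I(p^e)$ with limit $\mc{x}$, which belongs to $\textbf{BSR}(I)$ by definition; the limit-exchange principle again gives $\mc{x}+\lambda\in\z$, completing the coset equality.

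For the sharper conclusion in the principal case $I=\ideala{f}$, I would combine the coset equality with two further inputs. First, Theorem \ref{thm:ration} tells us every BSR root is a negative rational. Second, an analysis of the canonical witnessing sequence shows that if $\lambda=a/b\in(0,1]\cap\z\kud{p}\cap\textbf{FJ}(f)$, then $\nu^{\mf{a}}_f(p^e)=(p^ea-r_e)/b$ with $r_e=p^ea\bmod b\in\{1,\dots,b\}$, so the $p$-adic accumulation points along subsequences on which $r_e$ is constant are of the form $-r/b\in(-1,0]$. Since each coset of $\z$ in $\z\kud{p}$ has a unique representative in $(-1,0]$ (matching a unique representative of $\textbf{FJ}(f)$ in $(0,1]$), and any BSR root's coset can be matched this way, the coset bijection refines to the claimed set equality $\textbf{BSR}(f)=\{-\lambda:\lambda\in\textbf{FJ}(f)\cap(0,1]\cap\z\kud{p}\}$. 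The main subtlety to verify is that the Frobenius-action symmetries produce genuine $F$-jumping exponents rather than merely limits of them, so that each accumulation point is recovered from an actual element of $\textbf{FJ}(f)\cap(0,1]\cap\z\kud{p}$.
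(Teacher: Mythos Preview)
The paper does not prove this statement; it is quoted from \cite{QG21b} and used as a black box, so there is no in-paper argument to compare your proposal against.

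On the proposal itself: the strategy of connecting both sides through the $\nu$-invariants is the right one, but your ``limit-exchange principle'' is false as stated. From $b\nu_e-ap^e=o(p^e)$ archimedean and $b\nu_e-ap^e\to b\mc{x}$ $p$-adically you \emph{cannot} conclude $b\mc{x}\in\z$: take $z=-1/(p-1)\in\zp\setminus\z$ and $m_e=(p^{\lfloor e/2\rfloor}-1)/(p-1)$; then $|m_e|=o(p^e)$ yet $m_e\to z$ $p$-adically. What saves the argument is a much stronger bound already available in the paper: combining \eqref{eq:v(p^e)/p^e} with the monotonicity in Lemma~\ref{lem:F-thre in reg} gives $0\le p^e\cpt{\mf a}(I)-\nJI{e}\le\mu(I)$, so $b\nu_e-ap^e$ is a \emph{bounded} integer sequence, hence eventually constant along any $p$-adically convergent subsequence, and its limit lies in $\z$. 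With this correction, both inclusions go through essentially as you outline; along the way, discreteness of $\textbf{FJ}(I)$ is used not to ``upgrade'' a limit of approximations $\nu^{(e)}/p^e$ but to conclude that the nearby genuine $F$-thresholds $\cpt{\mf a_e}(I)$ (each within $\mu(I)/p^e$ of $\nu^{(e)}/p^e$) are eventually equal to a fixed $\lambda\in\textbf{FJ}(I)$. Your suggestion to ``specialize to a principal $f\in I\cap R^\circ$'' for the ideal case is a wrong turn: $\nu$-invariants and $F$-jumping exponents of $I$ are not those of a single element of $I$.

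For the sharper principal statement, your computation $\nu^{\mf a}_f(p^e)=(p^ea-r_e)/b$ with $r_e\in\{1,\dots,b\}$ correctly shows that every $p$-adic accumulation point lies in $[-1,0)$, but the step you flag as ``the main subtlety'' is a real gap: you must show that each accumulation point $-r/b$ is of the form $-\lambda'$ for some $\lambda'\in\textbf{FJ}(f)\cap(0,1]$. This uses an additional structural fact about $\textbf{FJ}(f)$, namely that for $\lambda\in\textbf{FJ}(f)\cap\z_{(p)}$ one has $p\lambda-\lfloor p\lambda\rfloor\in\textbf{FJ}(f)$ (a consequence of $(I^{\lceil cp\rceil})\ppdm{e+1}=((I^{\lceil cp\rceil})\ppdm{1})\ppdm{e}$), which is not derivable from the limit-exchange alone.
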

	
	\subsection{Direct construction of Bernstein-Sato functional equations}\label{subsec:Direct}
	
	In this section, we construct the Bernstein-Sato functional equations similar to \eqref{eq:BS-equ in cpx} for arbitrary ideals in an $F$-finite ring of characteristic $p$. For a single element $f\in R$, Jeffries and co-authors \cite{JNQ23} defined the $\cut{D_{R_f}}$-module $\cut{R_f}\bxf$. They gave an alternative characterization of the $(D_{R[t]})_0$-module 
	\begin{equation*}
		N_f=\frac{(D_{R[t]})_0\cdot[\frac{1}{f-t}]}{(D_{R[t]})_1\cdot[\frac{1}{f-t}]},\quad\text{with}\;[\frac{1}{f-t}]\in R[t]_{f-t}/R[t].
	\end{equation*}
	We generalize their construction. Let $R$ be an $F$-finite ring. Fix elements $\mc{F}=\{f_i\}_{i=1}^r$ of $R$. Suppose that $\B{f}=f_1\cdots f_r$ is a non-zero divisor of $R$.

	\begin{definition}
		We define the free rank one $\rcut{R_{\B{f}}}$-module $\rcut{R_{\B{f}}}\Lxf$, where $\Lxf$ is a 
		formal symbol for the generator. For every $\B{a}\in\z^r$, the exponent specialization map $E_{\B{a}}$ is given by
		\begin{equation*}
			E_{\B{a}}:\rcut{R_{\B{f}}}\Lxf\to\\R_{\B{f}},\quad \Z{U}\Lxf\mapsto\Z{U}(\B{a})\B{f}^{\B{a}},\quad\text{for all}\;\Z{U}\in\rcut{R_{\B{f}}}.
		\end{equation*}
	\end{definition}
	
	\begin{theorem}\label{thm:f1f2--}
		$\rcut{R_{\B{f}}}\Lxf$ admits a $\rcut{D_{R_{\B{f}}}}$-module structure compatible with every exponent specialization map $E_{\B{a}}$, i.e., for any $\B{a}\in\z^r$,
		\begin{equation}\label{eq:comp with exp}
			E_{\B{a}}(\Z{P}\cdot\Z{U}\Lxf)=\Z{P}(\B{a})(E_{\B{a}}(\Z{U}\Lxf)),\quad\text{for all}\; \Z{P}\in\rcut{D_{R_{\B{f}}}},\Z{U}\in\rcut{R_{\B{f}}}.
		\end{equation}
		With the $\rcut{D_{R_{\B{f}}}}$-module structure on $H'_{\mc{F}}$ induced by the homomorphism $\Delta\ud{r,R_{\B{f}}}$ in Lemma \ref{lem:Cfp to DR}, there is a $\rcut{D_{R_{\B{f}}}}$-module isomorphism $\rcut{R_{\B{f}}}\cong H'_{\mc{F}}$.
	\end{theorem}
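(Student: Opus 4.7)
Proof proposal.

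\textbf{Part (A): the $\rcut{D_{R_{\B{f}}}}$-module structure.} The compatibility (\ref{eq:comp with exp}) with all exponent specializations $E_{\B{a}}$ forces the action: for $\Z{P}\in\rceut{D_{R_{\B{f}}}}$ and $\Z{U}\in\rceut{R_{\B{f}}}$, one must have $\Z{P}\cdot\Z{U}\Lxf=\Z{V}\Lxf$ with
\[
\Z{V}(\B{a})=\B{f}^{-\B{a}}\,\Z{P}(\B{a})\!\left(\Z{U}(\B{a})\B{f}^{\B{a}}\right),\qquad\B{a}\in\z^r.
\]
The plan is to take this as the \emph{definition}, show $\Z{V}$ extends continuously to $\zp^r$ as an element of $\rceut{R_{\B{f}}}$, and then verify the module axioms. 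The crucial step is $p^e$-local constancy of $\Z{V}$: if $\B{a}-\B{a}'\in p^e\z^r$ then $\B{f}^{\B{a}-\B{a}'}\in R_{\B{f}}^{p^e}$, which commutes with every operator in $\dle{R_{\B{f}}}$ by the $R_{\B{f}}^{p^e}$-linearity characterization; combined with the $p^e$-periodicity of $\Z{P}$ and $\Z{U}$, this gives $\Z{V}(\B{a})=\Z{V}(\B{a}')$. Module axioms (associativity, distributivity, unit) are then verified by applying every $E_{\B{a}}$: both sides reduce to the corresponding identity for the $D_{R_{\B{f}}}$-action on $R_{\B{f}}$, and density of $\z^r\subset\zp^r$ lifts each identity to $\rcut{R_{\B{f}}}\Lxf$ by continuity.

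\textbf{Part (B): the isomorphism $\Theta^{\mc{F}}$.} Define $\Theta^{\mc{F}}$ to be the unique $\rcut{D_{R_{\B{f}}}}$-linear extension of $\Lxf\mapsto\delta'$; it exists because $\rcut{R_{\B{f}}}\Lxf$ is generated by $\Lxf$ already over $\rcut{R_{\B{f}}}\subseteq\rcut{D_{R_{\B{f}}}}$ and is free of rank one over $\rcut{R_{\B{f}}}$. Concretely, $\Theta^{\mc{F}}(\Z{U}\Lxf)=\Delta\ud{r,R_{\B{f}}}(\Z{U})\cdot\delta'$ for $\Z{U}\in\rcut{R_{\B{f}}}$. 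Using (\ref{eq:action of Lek}) and the expansion (\ref{eq:decom of delta}) of $\delta'$, a direct computation gives
\[
\Theta^{\mc{F}}(\Z{L}^e_{\B{k}}\Lxf)=(-1)^{|\B{k}|}\,\Z{B}_{\B{k}}\!\left((p^e-1)\B{1}_r\right)\,\B{f}^{\B{k}}\,Q_{e,\B{k}},\qquad\B{k}\in\textbf{N}_r(e),
\]
and the scalar $\prod_i\binom{p^e-1}{k_i}$ is a unit of $\fp$ by Lucas's theorem (\ref{eq:luc}). Hence at each level $e$, the $R_{\B{f}}$-basis $\{\Z{L}^e_{\B{k}}\Lxf\}$ of $\rceut{R_{\B{f}}}\Lxf$ maps to $R_{\B{f}}$-linearly independent multiples of the $R_{\B{f}}[\B{t}]$-basis $\{Q_{e,\B{k}}\}$ of $(H'_{\mc{F}})^e$. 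Injectivity of $\Theta^{\mc{F}}$ then follows on passing to the union $\rcut{R_{\B{f}}}\Lxf=\bigcup_e\rceut{R_{\B{f}}}\Lxf$. For surjectivity, I exploit the Frobenius identity $\delta'_{p^e\B{1}_r}=\prod_i(f_i^{p^e}-t_i^{p^e})^{p-1}\delta'_{p^{e+1}\B{1}_r}$ to rewrite, for any $\B{c}\in\nt^r$, the element $\B{t}^{\B{c}}Q_{e,\B{k}}$ as an $R_{\B{f}}$-combination of $Q_{e',\B{k}'}$'s at a sufficiently high level $e'\geq e$; since $H'_{\mc{F}}=\bigcup_e(H'_{\mc{F}})^e$, every element of $H'_{\mc{F}}$ is then covered.

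\textbf{Main obstacle.} The main difficulty is the surjectivity step in part (B): the image of $\rceut{R_{\B{f}}}\Lxf$ is only an $R_{\B{f}}$-linear span of $\{Q_{e,\B{k}}\}_{\B{k}\in\textbf{N}_r(e)}$, whereas $(H'_{\mc{F}})^e$ is the corresponding $R_{\B{f}}[\B{t}]$-span and is strictly larger. One cannot conclude surjectivity level-by-level; instead one must iterate the Frobenius identity above and carefully control the $\B{t}$-monomial combinatorics. Lucas's theorem is what governs the divided-power binomials appearing in the repeated expansions of $(f_i^{p^e}-t_i^{p^e})^{p-1}$, keeping the bookkeeping tractable and ensuring that the images do exhaust $H'_{\mc{F}}$.
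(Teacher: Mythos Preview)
Your Part (A) is correct and matches the paper's argument: define the action by its values on $\z^r$, use $R_{\B{f}}^{p^e}$-linearity of $\dle{R_{\B{f}}}$ to get $p^e$-periodicity, and verify the module axioms by density. Nothing to add there.

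In Part (B) your setup is right, but your ``main obstacle'' is based on a misconception. You assert that $(H'_{\mc{F}})^e$ is \emph{strictly larger} than the $R_{\B{f}}$-span of $\{Q_{e,\B{k}}\}_{\B{k}\in\textbf{N}_r(e)}$. It is not: the two coincide. The point is the local cohomology relation $(f_i-t_i)^{p^e}\delta'_{p^e\B{1}_r}=0$ in $H'_{\mc{F}}$ (multiplying $\delta'_{p^e\B{1}_r}$ by $(f_i-t_i)^{p^e}$ drops the $i$-th index to $0$, which is zero in $H^r$). In characteristic $p$ this reads $t_i^{p^e}\delta'_{p^e\B{1}_r}=f_i^{p^e}\delta'_{p^e\B{1}_r}$, so any $\B{t}^{\B{c}}\delta'_{p^e\B{1}_r}$ is an $R_{\B{f}}$-multiple of some $Q_{e,\B{k}}$ with $\B{k}\in\textbf{N}_r(e)$. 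Combined with the triangular change of basis (\ref{eq:decom of delta}) between $\{\delta'_{\B{k}}:\B{1}_r\leq\B{k}\leq p^e\B{1}_r\}$ and $\{Q_{e,\B{k}}:\B{k}\in\textbf{N}_r(e)\}$, this shows that $\{Q_{e,\B{k}}\}$ is already an $R_{\B{f}}$-basis of $(H'_{\mc{F}})^e$. (The paper's Setup~3 writes ``$R_{\B{f}}[\B{t}]$-module'' and ``$R_{\B{f}}[\B{t}]$-basis'' in several places; these are typos for $R_{\B{f}}$, since $H'_{\mc{F}}$ visibly has torsion over $R_{\B{f}}[\B{t}]$, e.g.\ $(f_i-t_i)\delta'=0$.)

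Once you see this, surjectivity is immediate level by level, exactly as injectivity is: $\Theta^{\mc{F}}$ carries the $R_{\B{f}}$-basis $\{\Z{L}^e_{\B{k}}\Lxf\}$ of $\rceut{R_{\B{f}}}\Lxf$ to the $R_{\B{f}}$-basis $\{(-1)^{|\B{k}|}\B{f}^{\B{k}}Q_{e,\B{k}}\}$ of $(H'_{\mc{F}})^e$, and taking the union over $e$ finishes. Your Frobenius-iteration workaround would eventually rediscover the same relation, but the detour is unnecessary.
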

	
	Let us first canonically endow $\rcut{R_{\B{f}}}\Lxf$ with a $\rcut{D_{R_{\B{f}}}}$-module structure. For a \Ndpo $\phi\in D_{R_{\B{f}}}$ and an element $u\in R_{\B{f}}$, the associated continuous map $\Z{V}_{\phi,u}\in\rcut{R_{\B{f}}}$ is defined by
	\begin{equation*}
		\Z{V}_{\phi,u}:(\mc{x}_1,\dots,\mc{x}_r)\mapsto\frac{\phi(uf_1^{\tre{\mc{x}_1}}\cdots f_r^{\tre{\mc{x}_r}})}{f_1^{\tre{\mc{x}_1}}\cdots f_r^{\tre{\mc{x}_r}}},\quad\text{where}\; e=\min\{l\in\nt:\phi\in \textbf{D}^{l}_R\}.
	\end{equation*}
	$\Z{V}_{\phi,u}\in\rceut{R_{\B{f}}}$. For any $r$-tuple $\B{a}=(a_1,\dots,a_r)\in\z^r$, $a_i-\tre{a_i}\in p^e\z$, we see that $\Z{V}_{\phi,u}(\B{a})\Y{f}^{\B{a}}=\phi(u\Y{f}^{\B{a}})$.

	In general, for any $\Z{P}\in\rcut{D_R}$ and $\Z{U}\in\rcut{R_{\B{f}}}$,
	we choose an integer $e\in\nt$ such that $\Z{P}$ and $\Z{U}$ can be written as
	\begin{equation*}
		\Z{P}=\sum^n_{i=1}\Z{G}_i\cdot\phi\ud{i},\Z{U}=\sum^m_{j=1}\Z{H}_j\cdot u_j,\quad\Z{G}_i,\Z{H}_j\in\rceut{\fp},\phi\ud{i}\in \dlr{R},u_j\in R_{\B{f}}.
	\end{equation*}
	Then the action of $\Z{P}$ on $\Z{U}\Lxf$ is given by
	\begin{equation}\label{eq:action on fs}
		\Z{P}\cdot \Z{U}\Lxf=\sum^n_{i=1}\sum^m_{j=1}(\Z{G}_i\Z{H}_j\cdot \Z{V}_{\phi\ud{i},u_j})\Lxf.
	\end{equation}
	From the statement above, we immediately verify that \eqref{eq:comp with exp} holds for every $\B{a}\in\z^r$.
	
	Now we show that the rule \eqref{eq:action on fs} yields a module action. It suffices to prove that for any $\Z{P}_1,\Z{P}_2\in\rcut{D_{R_{\B{f}}}}$ and $\Z{U}\in\rcut{R_{\B{f}}}$, suppose that $\Z{P}_1\cdot\Z{P}_2=\Z{P}_3$ and
	\begin{equation*}
		\Z{P}_2\cdot\Z{U}\Lxf=\Z{U}_1\Lxf,\quad\Z{P}_1\cdot\Z{U}_1\Lxf=\Z{U}_2\Lxf,\quad \Z{P}_3\cdot\Z{U}\Lxf=\Z{U}_3\Lxf,
	\end{equation*}
	where $\Z{P}_3\in\rcut{D_{R_{\B{f}}}}$ and $\Z{U}_1,\Z{U}_2,\Z{U}_3\in\rcut{R_{\B{f}}}$, there is $\Z{U}_2=\Z{U}_3$. By \eqref{eq:comp with exp}, for any $\B{a}\in\z^r$, we have
	\begin{align*}
		\Z{U}_3(\B{a})\Y{f}^{\B{a}}&=E_{\B{a}}(\Z{P}_3\cdot \Z{U}\Lxf)=\Z{P}_3(\B{a})(\Z{U}(\B{a})\Y{f}^{\B{a}})
		=(\Z{P}_1(\B{a})\cdot\Z{P}_2(\B{a}))(\Z{U}(\B{a})\Y{f}^{\B{a}})\\
		&=\Z{P}_1(\B{a})(\Z{P}_2(\B{a})(\Z{U}(\B{a})\Y{f}^{\B{a}}))=\Z{P}_1(\B{a})(\Z{U}_1(\B{a})\Y{f}^{\B{a}})=E_{\B{a}}(\Z{P}_1\cdot \Z{U}_1\Lxf)=\Z{U}_2(\B{a})\Y{f}^{\B{a}}.
	\end{align*}
	This means that $\Z{U}_2|_{\z^r}=\Z{U}_3|_{\z^r}$, thus $\Z{U}_2=\Z{U}_3$. Hence $\rcut{R_{\B{f}}}$ admits a $\rcut{D_{R_{\B{f}}}}$-module structure.
	
	Next we define the additive homomorphism $\Theta\ud{\mc{F}}:\rcut{R_{\B{f}}}\to H'_{\mc{F}}$ as follows.
	\begin{equation*}
		\Theta\ud{\mc{F}}:\Z{U}\Lxf\mapsto \Delta\ud{r,R_{\B{f}}}(\Z{U})(\delta'),\quad\text{here}\; \Z{U}\in\rcut{R_{\B{f}}}\subseteq\rcut{D_{R_{\B{f}}}}.
	\end{equation*}
	$\Delta\ud{r,R_{\B{f}}}(\rcut{R_{\B{f}}})\subseteq(D_{R_{\B{f}}[\B{t}]})_0$ is a commutative $R_{\B{f}}$-algebra. For 
	instance this shows that if we want to show that $\Theta\ud{\mc{F}}$ is $\rcut{D_{R_{\B{f}}}}$-linear, it suffices to prove that for any $\phi\in D_{R_{\B{f}}}$ and $u\in R_{\B{f}}$, there is
	\begin{equation*}
		\Theta_{\mc{F}}(\phi\cdot u\Lxf)=\Theta_{\mc{F}}(\Z{V}_{\phi,u}\Lxf)=\Delta\ud{r,R_{\B{f}}}(\Z{V}_{u,\phi})(\delta')=\phi (u\delta').
	\end{equation*}
	Suppose that $\phi\in\dle{R_{\B{f}}}$, then we have
	\begin{gather*}
		\Z{V}_{\phi,u}=\ssm{\B{a}\in N_r(e)}\frac{\phi(u\B{f}^{\B{a}})}{\B{f}^{\B{a}}}\cdot\Z{L}^e_{\B{a}}\quad\text{and}\quad \delta'=\ssm{\B{a}\in N_r(e)}(-1)^{|\B{a}|}\B{f}^{\B{a}}Q_{e,\B{a}}.
	\end{gather*}
	The second identity comes from \eqref{eq:decom of delta} and the fact that $\binom{p^e-1}{a}\equiv (-1)^a\bmod p$ for any $a\in \textbf{N}_1(e)$. Then by \eqref{eq:action of Lek}, we see that $\Delta\ud{r,R_{\B{f}}}(\Z{L}^e_{\B{k}})(\delta')=(-1)^{|\B{k}|}\B{f}^{\B{k}}Q_{e,\B{k}}$ for each $\B{k}\in N_r(e)$. Thus
	\begin{equation*}
		\Delta\ud{r,R_{\B{f}}}(V_{u,\phi})(\delta')=\ssm{\B{a}\in N_r(e)}\frac{\phi(u\B{f}^{\B{a}})}{\B{f}^{\B{a}}}(-1)^{|\B{a}|}\B{f}^{\B{a}}Q_{e,\B{a}}=\phi(u\delta').
	\end{equation*}
	This shows that $\Theta\ud{\mc{F}}$ is a $D_{R_{\B{f}}}$-module homomorphism. Furthermore, for any $e\in\nt$,
	$\{\Z{L}^e_{\B{k}}\Lxf\}_{\B{k}\in\nt^e}$ is an\\ $R_{\B{f}}$-basis of $\rceut{R_{\B{f}}}\Lxf$, their images under $\Theta\ud{\mc{F}}$ are $\{(-1)^{|\B{k}|}f^{\B{k}}Q_{e,\B{k}}\}_{\B{k}\in N_r(e)}$, which in turn form an $R_{\B{f}}$-basis of $(H'_{\mc{F}})^e$. Note that $\rcut{R_{\B{f}}}\Lxf=\cup\ud{e\in\nt}\,\rceut{R_\B{f}}\Lxf$ and $H'_{\mc{F}}=\cup\ud{e\in\nt}(H'_{\mc{F}})^e$, $\Theta\ud{\mc{F}}$ is an isomorphism.
	
	Via the $D_{R[\B{t}]}$-linear embedding $i_\mc{F}$, we want to know the inverse image  of $i_\mc{F}((D_{R[\B{t}]})_0\cdot \delta)$ and $i_\mc{F}((D_{R[\B{t}]})_0\cdot \delta)$ under $\Theta_\mc{F}$. When $r=1$, i.e., $\mc{F}=f$ for some $f\in R$, we have
	\begin{equation*}
		\Theta_\mc{F}(\cut{D_R}\cdot\bxf)=i_\mc{F}((D_{R[t]})_0\cdot\delta)\quad\text{and}\quad
		\Theta_\mc{F}(\cut{D_R}\cdot f\bxf)=i_\mc{F}((D_{R[t]})_1\cdot\delta).
	\end{equation*}
	Thus $N_f\cong \cut{D_R}\cdot\bxf/\cut{D_R}\cdot f\bxf$. $\Z{F}\in\cut{\fp}\subseteq\cut{D_R}$ satisfies a Bernstein-Sato functional equation for $f$ if there is a map $\Z{P}\in\cut{D_R}$ such that $\Z{F}\bxf=\Z{P}\cdot f\bxf$. And $\ann{\cut{\fp}}N_f$ consists of those $\Z{F}\in\cut{\fp}$ satisfying a \Nbs functional equation for $f$. 
	
	For general $r\in\nz$, we need to define some new \Ndsp on $R[\B{t}]$. 
	
	\begin{definition}
		Let $\textbf{Fun}_r$ be the set of functions $\alpha$ from $\mc{P}_r=\{1,\dots,r\}\times\nt$ to $\{-p+1,\dots,-1,0,1,\dots,p-1\}$ satisfying the condition that for each $i\in\{1,\dots,r\}$, the set $\{(i,j):\alpha(i,j)\neq 0\}$ is finite. The norm of $\alpha$, denoted by $\nm{\alpha}$, is the integer $\ssm{(i,j)\in\mc{P}_r}\alpha(i,j)p^j$. We define the operator on $R[\B{t}]$ associated to $\alpha$ as
		\begin{equation*}
			\mc{Dp}^{[\alpha]}=\prod^r_{i=1} \prod\ud{\alpha(i,j)<0}\partial^{[-\alpha(i,j)p^j]}_{t_i}\prod\ud{\alpha(i,j)>0}t_i^{\alpha(i,j)p^j}.
		\end{equation*}
		Clearly, $\mc{Dp}^{[\alpha]}$ lies in $(D_{R[\B{t}]})_{\nm{\alpha}}$.
	\end{definition}

	\begin{lemma}
		Let $n\in\nt$ be an integer. $(D_{R[\B{t}]})_n$ is a $\rcut{D_R}$-module generated by 
		\begin{equation*}
			\{\mc{Dp}^{[\alpha]}:\alpha\in\textbf{Fun}_r,\nm{\alpha}=n\}.
		\end{equation*}
	\end{lemma}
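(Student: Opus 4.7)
The plan is to apply Lemma \ref{lem:V-fil in pc} to reduce the claim to showing that each monomial generator $\dpot{\B{a}}\B{t}^{\B{b}}$ with $|\B{b}|-|\B{a}|=n$ lies in the $\rcut{D_R}$-submodule generated by $\{\mc{Dp}^{[\alpha]}:\nm{\alpha}=n\}$. Since $D_R$ embeds into $\rcut{D_R}$ via constant maps, it suffices to realize each such generator as a single $\rcut{D_R}$-multiple of some $\mc{Dp}^{[\alpha]}$.

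The factors $\partial^{[a_i]}_{t_i}t_i^{b_i}$ commute for distinct $i$, so the core of the argument is in a single variable. Fix $i$ and set $m_i=\ssm{j}\min(a_i^{[j]},b_i^{[j]})p^j$, $a_i'=a_i-m_i$, $b_i'=b_i-m_i$, so that $a_i'$ and $b_i'$ have disjoint base-$p$ digit supports. By Lucas's theorem, $\binom{a_i}{a_i'}$ is a unit in $\fp^\times$, hence $\partial^{[a_i]}_{t_i}=c_i\,\partial^{[a_i']}_{t_i}\partial^{[m_i]}_{t_i}$ for some $c_i\in\fp^\times$, and combined with the identity $\partial^{[m_i]}_{t_i}t_i^{m_i+b_i'}=(\partial^{[m_i]}_{t_i}t_i^{m_i})\,t_i^{b_i'}$ we get
\begin{equation*}
    \partial^{[a_i]}_{t_i}t_i^{b_i}=c_i\,\partial^{[a_i']}_{t_i}\,(\partial^{[m_i]}_{t_i}t_i^{m_i})\,t_i^{b_i'}.
\end{equation*}
The middle factor is a degree-$0$ operator multiplying $u\B{t}^{\B{c}}$ by $\binom{c_i+m_i}{m_i}\bmod p$, so it equals $\Delta\ud{r,R}(\Z{Q}_i)$ for the continuous function $\Z{Q}_i\in\rcut{\fp}$ sending $(\mc{x}_1,\ldots,\mc{x}_r)$ to $\binom{m_i-\mc{x}_i-1}{m_i}$.

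From the formula for $\Delta\ud{r,R}$ in Lemma \ref{lem:Cfp to DR}, one verifies the commutation rules $\Delta\ud{r,R}(\Z{Q})\,\partial^{[k]}_{t_i}=\partial^{[k]}_{t_i}\,\Delta\ud{r,R}(\Z{Q}^{(+ke_i)})$ and $\Delta\ud{r,R}(\Z{Q})\,t_i^k=t_i^k\,\Delta\ud{r,R}(\Z{Q}^{(-ke_i)})$, where $\Z{Q}^{(\pm ke_i)}$ denotes the translate of $\Z{Q}$ by $\pm ke_i$ in the $i$-th coordinate. Applying these to migrate $\Delta\ud{r,R}(\Z{Q}_i)$ to the left yields $\partial^{[a_i]}_{t_i}t_i^{b_i}=\Delta\ud{r,R}(\Z{S}_i)\,\partial^{[a_i']}_{t_i}t_i^{b_i'}$ for some $\Z{S}_i\in\rcut{\fp}$. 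Since $a_i'$ and $b_i'$ have disjoint base-$p$ supports, a direct inspection (using Lucas once more to merge divided powers sitting at distinct digit positions) shows $\partial^{[a_i']}_{t_i}t_i^{b_i'}=\mc{Dp}^{[\alpha_i]}$ for the $\alpha_i\in\textbf{Fun}_r$ supported on $\{i\}\times\nt$ defined by $\alpha_i(i,j)=-a_i'^{[j]}$ when $a_i'^{[j]}>0$ and $\alpha_i(i,j)=b_i'^{[j]}$ when $b_i'^{[j]}>0$, so that $\nm{\alpha_i}=b_i-a_i$.

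Multiplying over $i$ and using the shift commutations one final time to gather all $\Delta\ud{r,R}$-factors on the left, I obtain $\dpot{\B{a}}\B{t}^{\B{b}}=\Delta\ud{r,R}(\Z{S})\,\mc{Dp}^{[\alpha]}$ with $\alpha=\ssm{i}\alpha_i\in\textbf{Fun}_r$ (a disjoint-support sum) satisfying $\nm{\alpha}=\ssm{i}(b_i-a_i)=n$, as required. The only delicate aspect of the plan is bookkeeping the coordinate shifts in the $\rcut{\fp}$-factors through the successive commutations; once organized, the argument is mechanical and rests only on Lucas's theorem together with the explicit formulas for $\Delta\ud{r,R}$ and $\mc{Dp}^{[\alpha]}$.
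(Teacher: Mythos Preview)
Your proof is correct and follows essentially the same strategy as the paper: strip off the ``common $p$-adic digits'' part $\partial^{[m_i]}_{t_i}t_i^{m_i}$ (the paper's $\dpot{\B{c}}\B{t}^{\B{c}}$) as a degree-zero operator in the image of $\Delta_{r,R}$, leaving precisely $\mc{Dp}^{[\alpha]}$ with $\alpha(i,j)=b_i^{[j]}-a_i^{[j]}$. The paper packages the commutation step via the identity $[\dpot{p^e\B{u}},\dpot{\B{k}}\B{t}^{\B{k}}]=0$ for $\B{k}\in\textbf{N}_r(e)$ and the explicit formula $\Delta_{r,R}(\Z{B}_{\B{c}})=(-1)^{|\B{c}|}\dpot{\B{c}}\B{t}^{\B{c}}$, whereas you use equivalent shift rules for $\Delta_{r,R}$; both routes produce the same factorization $\dpot{\B{a}}\B{t}^{\B{b}}=\lambda\,\Delta_{r,R}(\Z{B}_{\B{c}})\cdot\mc{Dp}^{[\alpha]}$ up to a unit in $\fp^\times$.
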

	
	\begin{proof}
		For an operator $\dpot{\B{a}}\B{t}^{\B{b}}$ such that $\B{a}=(a_1,\dots,a_r),\B{b}=(b_1,\dots,b_r)\in\nt^r$ and $|\B{b}|-|\B{a}|=n$, write each $a_i$ and $b_i$ as the $p$-adic expansions 
		\begin{equation*}
			a_i=\ssm{j\in\nt}a_i^{[j]}p^j,\quad b_i=\ssm{j\in\nt}b_i^{[j]}p^j.
		\end{equation*}
		Let $c_i=\ssm{j\in\nt}\min\{a_i^{[j]},b_i^{[j]}\}p^j$ for $i=1,\dots,r$. We define a function $\alpha\in\textbf{Fun}_r$ by setting $\alpha(i,j)=b_i^{[j]}-a_i^{[j]}$. By easy computations, based on the facts below
		\begin{equation*}
			\dpot{\B{u}}\dpot{\B{v}}=\binom{\B{u}+\B{v}}{\B{u}}\dpot{\B{u}+\B{v}},\quad [\dpot{p^e\B{u}},\dpot{\B{k}}\B{t}^{\B{k}}]=0,\quad\text{for all}\;\B{u},\B{v}\in\nt^r, \B{k}\in\textbf{N}_r(e),
		\end{equation*}
		we obtain the following conclusion.
		\begin{equation*}
			\dpot{\B{a}}\B{t}^{\B{b}}=\lambda\,\dpot{\B{c}}\B{t}^{\B{c}}\cdot\mc{Dp}^{[\alpha]}=\lambda\,\Delta\ud{r,R}(\Z{B}_{\B{c}})\cdot\mc{Dp}^{[\alpha]},\quad\text{where}\;\B{c}=(c_1,\dots,c_r),\lambda\in\fp^*.
		\end{equation*}
		Hence $(D_{R[\B{t}]})_n$ is generated by $\{\mc{Dp}^{[\alpha]}:\alpha\in\textbf{Fun}_r,\nm{\alpha}=n\}$ as a $\cut{D_R}$-module.
		
		For example, if we take $p=3,r=2,n=1$ and $\B{a}=(5,4),\B{b}=(2,8)$, then we have $\B{c}=(2,4)$ and
		\begin{align*}
			\partial^{[5]}_{t_1}\partial^{[4]}_{t_2}t_1^2t_2^8&=(\partial^{[3]}_{t_1}\partial^{[2]}_{t_1}t_1^2)(\partial^{[3]}_{t_2}\partial^{[1]}_{t_2}t_2^{6}t_2^2)=(\partial^{[3]}_{t_1}\partial^{[2]}_{t_1}t_1^2)(\partial^{[3]}_{t_2}t_2^{6}\partial^{[1]}_{t_2}t_2^2)\\
			&=(\partial^{[2]}_{t_1}t_1^2\partial^{[3]}_{t_1})(\partial^{[3]}_{t_2}t_2^3\partial^{[1]}_{t_2}t_2^1t_2^4)=\partial^{[2]}_{t_1}\partial^{[4]}_{t_2}t_1^2t_2^4\cdot\mc{Dp}^{[\alpha]},
		\end{align*}
		where $\alpha\in\textbf{Fun}_2$ satisfies $\alpha(1,1)=-1,\alpha(2,0)=\alpha(2,1)=1$, and other $\alpha(i,j)=0$.
	\end{proof}

	For any $\alpha\in\textbf{Fun}_r$, we define the function $\Z{Bd}^{[\alpha]}\in\rcut{\fp}$ as $\Z{B}_{\alpha\ud{-}}$, where 
	\begin{equation*}
		\alpha\ud{-}=\left(\sum_{\alpha(1,j)<0}-\alpha(1,j)p^j,\dots,\sum_{\alpha(r,j)<0}-\alpha(r,j)p^j\right).
	\end{equation*}
	And we define the element $\B{f}^{[\alpha]}=\prod^r_{i=1}f_i^{\sum\ud{j\in\nt}\alpha(i,j)p^j}\in R_\B{f}$. Immediately we have $\Z{Bd}^{[\alpha]}\B{f}^{[\alpha]}\cdot \delta'=\mc{Dp}^{[\alpha]}(\delta')$. Combining these results, we give another description of the $\cut{\fp}$-module $N_{\mc{F}}$.
	
	\begin{corollary}\label{cor:mthmA}
		View $\rcut{D_R}$ as an $\fp$-subalgebra of $\rcut{D_{R_{\B{f}}}}$. For each $n\in\nt$, we have
		\begin{equation*}
			\iota_{\mc{F}}((D_{R[\B{t}]})_n\cdot \delta)=(D_{R[\B{t}]})_n\cdot\delta'=\Theta\ud{\mc{F}}\left(\ssm{\alpha\in\textbf{Fun}_r,\nm{\alpha}=n}\rcut{D_R}\cdot\Z{Bd}^{[\alpha]}\B{f}^{[\alpha]}\Lxf\right).
		\end{equation*}
		Hence $N_{\mc{F}}$ is isomorphic to the $\rcut{D_R}$-module
		\begin{equation*}
			\frac{\ssm{\alpha\in\textbf{Fun}_r,\nm{\alpha}=0}\rcut{D_R}\cdot\Z{Bd}^{[\alpha]}\B{f}^{[\alpha]}\Lxf}{\ssm{\alpha\in\textbf{Fun}_r,\nm{\alpha}=1}\rcut{D_R}\cdot\Z{Bd}^{[\alpha]}\B{f}^{[\alpha]}\Lxf}.
		\end{equation*}
		With the $\cut{\fp}$-module structure on $N_\mc{F}$ induced by the homomorphism $\Sigma'\ud{r,R}$ in \eqref{eq:Sigma}, any fucntion $\Z{F}\in\\\cut{\fp}$ annihilates $N_\mc{F}$ if and only if $\Sigma'\ud{r,R}(\Z{F})\cdot \delta\subseteq (D_{R[\B{t}]})_1\cdot\delta$, as $(D_{R[\B{t}]})_0\cdot (D_{R[\B{t}]})_1=(D_{R[\B{t}]})_1$. Equivalently,
		\begin{equation}\label{eq:BS equa in char p}
			\Sigma'\ud{r,R}(\Z{F})\Lxf=\ssm{\alpha\in\textbf{Fun}_r,\nm{\alpha}=1}\Z{P}_{\alpha}\cdot \Z{Bd}^{[\alpha]}\B{f}^{[\alpha]}\Lxf
		\end{equation}
		for some $\Z{P}_{\alpha}\in\rcut{D_R}$.
		
		Let $I=\ideala{f_1,\dots,f_r}$. We say that a function $\Z{F}\in\cut{\fp}$ satisfies a \Nbs functional equation for $I$ if \eqref{eq:BS equa in char p} holds. All functions $\Z{F}$ satisfying a \Nbs functional equation for $I$ consist  $\ann{\cut{\fp}}N_{\mc{F}}$.
	\end{corollary}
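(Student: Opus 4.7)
My plan is to deduce the corollary by combining three inputs: the lemma just above the statement (expressing $(D_{R[\B{t}]})_n$ as a $\rcut{D_R}$-module generated by $\{\mc{Dp}^{[\alpha]}\}_{\nm{\alpha}=n}$), the identity $\mc{Dp}^{[\alpha]}(\delta')=\Z{Bd}^{[\alpha]}\B{f}^{[\alpha]}\cdot\delta'$ recorded just above, and the isomorphism $\Theta\ud{\mc{F}}\colon\rcut{R_{\B{f}}}\Lxf\cong H'_{\mc{F}}$ from Theorem~\ref{thm:f1f2--}.

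First I would note that $\iota_{\mc{F}}$ is $D_{R[\B{t}]}$-linear and sends $\delta$ to $\delta'$, so $\iota_{\mc{F}}((D_{R[\B{t}]})_n\cdot\delta)=(D_{R[\B{t}]})_n\cdot\delta'$ for every $n\in\nt$. Writing $(D_{R[\B{t}]})_n$ as the $\rcut{D_R}$-span of $\{\mc{Dp}^{[\alpha]}\}_{\nm{\alpha}=n}$ and substituting $\mc{Dp}^{[\alpha]}(\delta')=\Z{Bd}^{[\alpha]}\B{f}^{[\alpha]}\cdot\delta'$ gives
\[
(D_{R[\B{t}]})_n\cdot\delta'=\ssm{\alpha\in\textbf{Fun}_r,\nm{\alpha}=n}\rcut{D_R}\cdot\Z{Bd}^{[\alpha]}\B{f}^{[\alpha]}\cdot\delta'.
\]
Since $\Theta\ud{\mc{F}}$ is $\rcut{D_{R_{\B{f}}}}$-linear and $\rcut{D_R}\hookrightarrow\rcut{D_{R_{\B{f}}}}$, applying $(\Theta\ud{\mc{F}})^{-1}$ transports this equality onto $\ssm{\alpha\in\textbf{Fun}_r,\nm{\alpha}=n}\rcut{D_R}\cdot\Z{Bd}^{[\alpha]}\B{f}^{[\alpha]}\Lxf$, proving part~(1). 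Taking the quotient of the $n=0$ case by the $n=1$ case then transfers the definition of $N_{\mc{F}}$ across $\Theta\ud{\mc{F}}$.

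For the final claim I would use that the $\cut{\fp}$-action on $N_{\mc{F}}$ factors through $\Delta'\ud{r,R}=\Delta\ud{r,R}\circ\Sigma\ud{r,R}$. Since $N_{\mc{F}}$ is cyclic as a $(D_{R[\B{t}]})_0$-module with generator the class of $\delta$, and $\Delta'\ud{r,R}(\Z{F})$ is a polynomial in the higher Euler operators $\eul{\B{t},p^i}$, which a direct computation shows to be central in $(D_{R[\B{t}]})_0$, it follows that $\Z{F}\in\cut{\fp}$ annihilates $N_{\mc{F}}$ if and only if $\Delta'\ud{r,R}(\Z{F})\cdot\delta\in(D_{R[\B{t}]})_1\cdot\delta$; sufficiency uses $(D_{R[\B{t}]})_0\cdot(D_{R[\B{t}]})_1\subseteq(D_{R[\B{t}]})_1$ to slide $\Delta'\ud{r,R}(\Z{F})$ past any $\varphi\in(D_{R[\B{t}]})_0$ into $(D_{R[\B{t}]})_1\cdot\delta$. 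Applying part~(1) with $n=1$ under $(\Theta\ud{\mc{F}})^{-1}$ rewrites this membership precisely as the functional equation~\eqref{eq:BS equa in char p}.

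The main bookkeeping obstacle is verifying that the module actions match up under $\Theta\ud{\mc{F}}$: specifically, that the restriction of the $\rcut{D_{R_{\B{f}}}}$-action on $\rcut{R_{\B{f}}}\Lxf$ to $\rcut{D_R}$ corresponds on $H'_{\mc{F}}$ to the action of $\Delta\ud{r,R}(\rcut{D_R})\subseteq(D_{R[\B{t}]})_0$. This is built into the construction of $\Theta\ud{\mc{F}}$ in Theorem~\ref{thm:f1f2--}, so no substantively new argument is needed and the remaining content is essentially formal.
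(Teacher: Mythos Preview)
Your proposal is correct and follows the same approach as the paper, which states the corollary without an explicit proof and relies entirely on the preceding lemma (generation of $(D_{R[\B{t}]})_n$ by the $\mc{Dp}^{[\alpha]}$), the identity $\mc{Dp}^{[\alpha]}(\delta')=\Z{Bd}^{[\alpha]}\B{f}^{[\alpha]}\cdot\delta'$, and Theorem~\ref{thm:f1f2--}. Your explicit verification that the higher Euler operators are central in $(D_{R[\B{t}]})_0$ (because they act on each monomial $u\B{t}^{\B{b}}$ by a scalar depending only on $|\B{b}|$, and every generator $\phi\,\dpot{\B{a}}\B{t}^{\B{c}}$ of $(D_{R[\B{t}]})_0$ preserves total $\B{t}$-degree) is a useful clarification that the paper's terse justification ``as $(D_{R[\B{t}]})_0\cdot(D_{R[\B{t}]})_1=(D_{R[\B{t}]})_1$'' leaves implicit.
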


	\section{Bernstein-Sato roots of weighted homogeneous polynomials}\label{sec:WH}
	
	In this section, we work over a perfect field $\kk$ of characteristic $p$. Fix a weight vector $\B{w}=(w_1,\dots,w_n)\in(\nz)^n$. For any $n$-tuple $\B{a}=(a_1,\dots,a_n)\in\nt^n$, we set $\wno{\B{a}}=\sum^n_{i=1}w_ia_i$. Given a polynomial
	\begin{equation*}
		f=\ssm{\B{a}\in\nt^n}c_{\B{a}}\B{x}^{\B{a}}\in\kk[\B{x}]=\kk[x_1,\dots,x_n],\quad\text{where}\;c_{\B{a}}\in\kk,
	\end{equation*}
	we define the following notation.
	\begin{itemize}
		\item $\Tupp{f}=\{\B{a}\in\nt^n:c_{\B{a}}\neq 0\}$. 
		\item $\wxd{f}=\max\{\wno{\B{a}}:\B{a}\in\Tupp{f}\}$. $\wmd{f}=\min\{\wno{\B{a}}:\B{a}\in\Tupp{f}\}$.
	\end{itemize}
	
	If $\wxd{f}=\wmd{f}=d$, $f$ is said to be \Nwh with respect to $\B{w}$ of weighted degree $d$. Usually we denote $d$ by $\wtd{f}$. We say for short that $f$ is \textbf{WH} of type $(\B{w},d)$. The sets
	\begin{equation*}
		\kk[\B{x}](\B{w},d)=\{f\in \kk[\B{x}]:\wmd{f}\geq d\}
	\end{equation*}
	indexed by $d\in\nt$ form a filtration on $\kk[\B{x}]$.

	Consider a weighted homogeneous polynomial $f$ of type $(\B{w},d)$ in $\kk[\B{x}]$ which defines an isolated singularity at the orgin of $\kk^n$. In this section, we will analyze \Nft and $F$-jumping exponents of $f$, find \Nbs functional equations for $f$, and  provide an explicit description of \Nbs roots of $f$.
	
	\subsection{$F$-thresholds and $F$-jumping exponents of polynomials}
	
	Fix an $\nt$-grading on $\kk[\B{x}]$ induced by a weighted vector $\B{w}\in(\nz)^n$. Let $\mf{m}$ be the maximal ideal $\ideala{x_1,\dots,x_n}$ of $\kk[\B{x}]$. To every $n$-tuple $\B{a}=(a_1,\dots,a_n)\in(\nz)^n$ we associate the $\mf{m}$-primary ideal $\mf{m}(\B{a})=\ideala{x_1^{a_1},\dots,x_n^{a_n}}$.
	
	\begin{theorem}\label{thm:F-the of WHideal}
		Consider the ideals
		\begin{equation*}
			I=\ideala{f_1+g_1,\dots,f_m+g_m,h_1,\dots,h_l}\quad\text{and}\quad I_0=\ideala{f_1,\dots,f_m}
		\end{equation*}
		of $\kk[\B{x}]$ contained in $\mf{m}$. Let $d_0$ (resp. $d'_0$) be the minimum of the degrees
		$\wmd{f_i}$ for all $i$ (resp. $\wmd{g_i}$, $\wmd{h_j}$ for all $i,j$.)
		\begin{enumerate}
			\item If $d'_0\geq d_0$, then the $F$-threshold $\cpt{\mf{m}(\B{a})}(I)\leq\frac{\wno{\B{a}}}{d_0}$.
			\item Suppose that every $f_i$ is \textbf{WH} of type $(\B{w},d_i)$.
			\begin{itemize}
				\item[(a)] If $\wmd{g_i}>d_i$ for all $i$, then $\cpt{\mf{m}(\B{a})}(I)\geq \cpt{\mf{m}(\B{a})}(I_0)$.
				\item[(b)] If $I_0$ is $\mf{m}$-primary, and all $d_i$ are equal to $d_0<d'_0$, then $\cpt{\mf{m}(\B{a})}(I)=\cpt{\mf{m}(\B{a})}(I_0)=\frac{\wno{\B{a}}}{d_0}.$
			\end{itemize}
		\end{enumerate}
	\end{theorem}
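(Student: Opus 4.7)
The plan is to exploit the weighted grading throughout. The ideal $\mf{m}(\B{a})\ppm{e} = \ideala{x_1^{a_1 p^e},\dots,x_n^{a_n p^e}}$ is generated by weighted homogeneous monomials, hence is itself a WH ideal, and so decomposes as a direct sum $\bigoplus_D (\mf{m}(\B{a})\ppm{e} \cap V_D)$, where $V_D$ denotes the $\kk$-span of WH polynomials of weighted degree $D$. The basic numerical observation is that a monomial $\B{x}^{\B{b}}$ with $\wno{\B{b}} \geq \wno{\B{a}} p^e$ must satisfy $b_i \geq a_i p^e$ for some $i$, and hence lies in $\mf{m}(\B{a})\ppm{e}$; consequently $\kk[\B{x}](\B{w}, \wno{\B{a}} p^e) \subseteq \mf{m}(\B{a})\ppm{e}$. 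Part (1) follows at once: under $d'_0 \geq d_0$ every generator of $I$ lies in $\kk[\B{x}](\B{w}, d_0)$, so $I^n \subseteq \kk[\B{x}](\B{w}, n d_0)$, whence $I^n \subseteq \mf{m}(\B{a})\ppm{e}$ as soon as $n d_0 \geq \wno{\B{a}} p^e$; this forces $\nu^{\mf{m}(\B{a})}_I(p^e) < \wno{\B{a}} p^e / d_0$, and letting $e \to \infty$ gives the threshold bound.

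For (2)(a), I use a weighted initial form argument. Since each $f_i$ is WH of degree $d_i$ and $\wmd{g_i} > d_i$, the weighted initial form of $f_i + g_i$ equals $f_i$, and expanding $\prod_k (f_{i_k} + g_{i_k})$ shows that its initial form is $\prod_k f_{i_k}$. If $I^n \subseteq \mf{m}(\B{a})\ppm{e}$, then each such product (being an element of $I^n$) lies in $\mf{m}(\B{a})\ppm{e}$; since this ideal is weighted-graded, the initial form $\prod_k f_{i_k}$ also lies there. Because these products generate $I_0^n$, we obtain $I_0^n \subseteq \mf{m}(\B{a})\ppm{e}$. The contrapositive gives $\nu^{\mf{m}(\B{a})}_{I_0}(p^e) \leq \nu^{\mf{m}(\B{a})}_I(p^e)$ for all $e$, proving (a) in the limit.

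For (2)(b), combining (1) applied to $I$ with (a) reduces matters to proving the lower bound $\cpt{\mf{m}(\B{a})}(I_0) \geq \wno{\B{a}}/d_0$. Because $I_0$ is $\mf{m}$-primary and WH with generators in degree $d_0$, the graded ring $\kk[\B{x}]/I_0$ is finite-dimensional, so some constant $C_0$ satisfies $V_D \subseteq I_0$ for all $D \geq C_0$. A short induction on $n$ (writing each WH element of degree $D \geq d_0$ as a WH combination of the $f_i$ with coefficients in $V_{D - d_0}$, then applying the inductive hypothesis to those coefficients) yields $V_D \subseteq I_0^n$ for all $D \geq C_0 + (n-1) d_0$. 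The monomial $\B{x}^{\B{c}}$ with $c_i = a_i p^e - 1$ belongs to $V_{\wno{\B{a}} p^e - \wno{\B{1}_n}}$ and avoids $\mf{m}(\B{a})\ppm{e}$; for $n$ just below $(\wno{\B{a}} p^e - \wno{\B{1}_n} - C_0)/d_0 + 1$ it therefore witnesses $I_0^n \not\subseteq \mf{m}(\B{a})\ppm{e}$, giving $\nu^{\mf{m}(\B{a})}_{I_0}(p^e)/p^e \to \wno{\B{a}}/d_0$. The main obstacle is precisely this linear-in-$n$ growth of the WH vanishing threshold for $\kk[\B{x}]/I_0^n$: establishing the bound $V_D \subseteq I_0^n$ for $D \geq C_0 + (n-1) d_0$ is what makes the witness monomial $\B{x}^{\B{c}}$ available for $n$ close enough to $\wno{\B{a}} p^e / d_0$ to force the sharp lower bound.
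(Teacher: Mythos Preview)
Your proof is correct. Parts (1) and (2)(a) match the paper's argument up to contrapositive reformulation: where the paper takes a witness $\B{f}^{\B{b}}\notin\mf{m}(p^e\B{a})$ and locates a monomial in its support, you instead observe $\kk[\B{x}](\B{w},\wno{\B{a}}p^e)\subseteq\mf{m}(\B{a})\ppm{e}$ and run the initial-form argument; these are the same idea.

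For (2)(b) the two routes genuinely diverge. You prove by induction on $n$ that $V_D\subseteq I_0^n$ whenever $D\geq C_0+(n-1)d_0$, then explicitly feed in the monomial $\B{x}^{p^e\B{a}-\B{1}_n}$ as a witness for $I_0^n\not\subseteq\mf{m}(\B{a})\ppm{e}$. The paper instead starts from the witness $\B{f}^{\B{b}}$ for $I_0^{A_e}\not\subseteq\mf{m}(p^e\B{a})$, notes that $\B{f}^{\B{b}}\in(\mf{m}(p^e\B{a}):I_0)$, and uses $\kk[\B{x}](\B{w},N_1)\subseteq I_0$ together with a colon-ideal trick (if $\wtd{\B{f}^{\B{b}}}$ were too small, multiplying by $\B{x}^{p^e\B{a}-\B{1}_n-\B{u}}$ would contradict the colon containment) to force $A_e d_0>p^e\wno{\B{a}}-|\B{w}|-N_1$. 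Your approach is more constructive and makes the role of the graded Artinian structure of $\kk[\B{x}]/I_0$ transparent; the paper's colon-ideal argument avoids the induction and is a bit shorter. Both yield the same asymptotic bound $A_e/p^e\to\wno{\B{a}}/d_0$.
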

	
	\begin{proof}
		For every $e\in\nz$, let $A_e$ and $B_e$ denote $\nu^{\mf{m}(\B{a})}_{I_0}(p^e)$ and $\nu^{\mf{m}(\B{a})}_I(p^e)$, respectively. There exist $\B{b},\B{c}\in\nt^m$ and $\B{d}\in\nt^l$ such that 
		\begin{itemize}
			\item $|\B{b}|=A_e$, $|\B{c}|+|\B{d}|=B_e$.
			\item $\B{f}^{\B{b}}=\prod^m_{i=1}f_i^{b_i}\notin \mf{m}(\B{a})\ppm{e}$, $(\B{f+g})^{\B{c}}\B{h}^{\B{d}}=\prod^m_{i=1}(f_i+g_i)^{c_i}\prod^l_{i=1}h_i^{d_i}\notin\mf{m}(\B{a})\ppm{e}$.
		\end{itemize}
		
		To prove (1), note that
		$\mf{m}(\B{a})\ppm{e}=\mf{m}(p^e\B{a})$, there is a monomial $\B{x}^{\B{u}}$ occurring in $G$ with $\B{u}\leq p^e\B{a}-\B{1}_n$. It implies that $B_ed_0\leq\wmd{(\B{f+g})^{\B{c}}\B{h}^{\B{d}}}<p^e\wno{\B{a}}$. We obtain $\frac{B_e}{p^e}<\frac{\wno{\B{a}}}{\min\{d_0\}}$. By taking limits, we are done.
		
		To prove (2.a), we only need to show that $B_e\geq A_e$ for all $e$. As
		\begin{equation*}
			(\B{f+g})^{\B{b}}=\prod^m_{i=1}(f_i+g_i)^{b_i}=\B{f}^\B{b}+F_0,\quad\text{with}\;\wmd{F_0}\geq \wtd{\B{f}^{\B{b}}},
		\end{equation*}
		$\Tupp{(\B{f+g})^{\B{b}}}=\Tupp{\B{f}^\B{b}}\cup\Tupp{F_0}$. Since $\B{f}^\B{b}\notin\mf{m}(p^e\B{a})$, it follows that 
		\begin{equation*}
			\Tupp{(\B{f+g})^{\B{b}}},\;\Tupp{\B{f}^\B{b}}\cap\{\B{u}\in\nt^n:\B{u}\leq p^e\B{a}-\B{1}_n\}\neq\varnothing. 
		\end{equation*}
		Then we have $(\B{f+g})^{\B{b}}\notin\mf{m}(p^e\B{a})$, and $B_e\geq A_e$.
		
		To prove (2.b), note that $I_0^{A_e+1}\subseteq\mf{m}(p^e\B{a})$. Hence $\B{f}^{\B{b}}\in(\mf{m}(p^e\B{a}):I)\backslash\mf{m}(p^e\B{a})$. If $\mf{m}^{N_0}\subseteq I$, then 
		\begin{equation*}
			\kk[\B{x}](\B{w},N_1)\subseteq\mf{m}^{N_0}\subseteq I,\quad\text{where}\;N_1=N_0\max\{w_i:1\leq i\leq n\}.
		\end{equation*}
		Therefore $\B{f}^{\B{b}}\in(\mf{m}(p^e\B{a}):\kk[\B{x}](\B{w}, N_1))\backslash\mf{m}(p^e\B{a})$. There is a $\B{u}\in \Tupp{F}$ such that $\B{u}\leq p^e\B{a}-\B{1}_n$. Actually, we claim that
		$A_ed=\wtd{\B{x}^{\B{u}}}>p^e\wno{\B{a}}-|\B{w}|-N_1$. Otherwise, let $\B{v}=p^e\B{a}-\B{1}_n-\B{u}$, then $\B{x}^{\B{v}}\in\kk[\B{x}](\B{w},N_1)$. However, $p^e\B{a}-\B{1}_n\in\Tupp{\B{f}^{\B{b}}\B{x}^{\B{v}}}$, so $\B{f}^{\B{b}}\B{x}^{\B{v}}\notin\mf{m}(p^e\B{a})$, a contradiction. By taking limits on $\frac{A_e}{p^e}$ and combining (1) and (2.a), we get the conclusion.
	\end{proof}
	
	Fix an $n$-tuple $\B{a}$ of $\nz$, now we give the description of $\nu^{\mf{m}(\B{a})}_f(p^e)$ and $\cpt{\mf{m}(\B{a})}(f)$ for a weighted homogeneous polynomial $f\in\mf{m}$ which defines an isolated singularity at the origin of $\kk^n$, i.e., 
	\begin{equation}\label{eq:iso sin}
		\jac{f}=\ideala{\frac{\partial f}{\partial x_1},\dots,\frac{\partial f}{\partial x_n}}\;\text{is}\;\mf{m}\text{-primary}.
	\end{equation}
	We start with the following basic proposition.
	
	\begin{Setup}
		Every real number $\lambda\in (0,1]$ has a unique non-terminating base $p$ expansion:
		\begin{equation*}
			\lambda=\sum^{\infty}_{i=1}\frac{\lambda\dit{i}}{p^i},\quad\text{where}\;\lambda\dit{i}\in \{0,1,\dots,p-1\},
		\end{equation*}
		in which the sequence $ \{\lambda\dit{i}\}_{i\in\nz} $ is not eventually zero. Fix an integer $e\in\nz$, let $ [\lambda]_e=\sum^e_{i=1}\frac{\lambda\dit{i}}{p^i}$ denote the truncation of the non-terminating base $p$ expansion of $\lambda$ at $e$-th spot.
	\end{Setup}

	\begin{theorem}\label{thm:F-thre of f}
		Suppose that $f$ is \textbf{WH} of type $(\B{w},d)$ and $d\geq \wno{\B{a}}$. If $f$ satisfies \eqref{eq:iso sin}, then 
		\begin{equation*}
			\cpt{\mf{m}(\B{a})}(f)=\frac{\wno{\B{a}}}{d}\quad\text{or}\quad\cpt{\mf{m}(\B{a})}(f)=[\frac{\wno{\B{a}}}{d}]_L-\frac{E}{p^L}
		\end{equation*}
		for some $L\in\nz$ and $E\in\{0,\dots,n-1\}$.
	\end{theorem}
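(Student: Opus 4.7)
The plan is to combine the weighted-homogeneity upper bound from Theorem~\ref{thm:F-the of WHideal}(1) with a lower bound coming from the isolated-singularity hypothesis, and then to read off the arithmetic form of $c := \cpt{\mf{m}(\B{a})}(f)$. By Theorem~\ref{thm:F-the of WHideal}(1) applied to $I = I_0 = \ideala{f}$, one has $c \leq \lambda_0 := \wno{\B{a}}/d$; more concretely, writing $\nu_e := \nu^{\mf{m}(\B{a})}_f(p^e)$, any monomial $\B{x}^{\B{u}}$ of $f^{\nu_e}$ with $\B{u} \leq p^e\B{a} - \B{1}_n$ satisfies, by weighted homogeneity, $\nu_e d = \wno{\B{u}} \leq p^e\wno{\B{a}} - |\B{w}|$. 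If equality $c = \lambda_0$ holds we are in the first case of the conclusion, so henceforth assume $c < \lambda_0$.

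I would then aim to prove that there exists $L \in \nz$ with $p^L c \in \z$ and
\[
\nu_L \geq \lceil p^L \lambda_0 \rceil - n - 1.
\]
Once this is in hand, setting $E := \lceil p^L \lambda_0 \rceil - \nu_L - 2$ yields $E \in \{0,\dots,n-1\}$ (the bound $E \geq 0$ coming from $c < \lambda_0$ and $p^L c = \nu_L + 1 \in \z$, using Lemma~\ref{lem:F-thre in reg}(2)), and a direct computation using the identity $p^L[\lambda_0]_L = \lceil p^L \lambda_0 \rceil - 1$, valid for the non-terminating base-$p$ expansion, gives $c = (\nu_L + 1)/p^L = [\lambda_0]_L - E/p^L$.

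To prove the key inequality, I would argue by contradiction: suppose $\nu_L d < p^L\wno{\B{a}} - (n+1)d$. Choose a witness monomial $\B{x}^{\B{u}}$ of $f^{\nu_L}$ with $\B{u} \leq p^L\B{a} - \B{1}_n$, so that the gap vector $\B{g} := p^L\B{a} - \B{1}_n - \B{u}$ satisfies $\wno{\B{g}} > (n+1)d - |\B{w}|$. Because $\jac{f}$ is $\mf{m}$-primary, $\partial_{x_1}f, \dots, \partial_{x_n}f$ is a regular sequence in $\kk[\B{x}]$, so the Milnor algebra $\kk[\B{x}]/\jac{f}$ is graded Artinian Gorenstein of socle degree $\sigma = nd - 2|\B{w}|$; hence every weighted-homogeneous element of weighted degree $> \sigma$ lies in $\jac{f}$, and in particular $\B{x}^{\B{g}} \in \jac{f}$. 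Writing $\B{x}^{\B{g}} = \sum h_i \partial_{x_i} f$ and combining this with the weighted Euler identity $df = \sum w_i x_i \partial_{x_i} f$ and the divided-power structure of $\dle{\kk[\B{x}]}$, one should construct an operator $\vartheta \in \dle{\kk[\B{x}]}$ (for an appropriate level $e$) for which $\vartheta(f^{\nu_L + 1})$ has a nonzero monomial at $\B{x}^{p^L\B{a} - \B{1}_n}$, so that $\vartheta(f^{\nu_L + 1}) \notin \mf{m}(p^L\B{a})$. This contradicts the fact that $f^{\nu_L + 1} \in \mf{m}(p^L\B{a})$ forces $\dle{\kk[\B{x}]}\cdot\ideala{f^{\nu_L + 1}} \subseteq \mf{m}(p^L\B{a})$ by Lemma~\ref{lem:regular prop}(2).

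The main obstacle is twofold. First, one needs to know that some $L$ with $p^L c \in \z$ exists at all when $c < \lambda_0$, which is the implicit assertion $c \in \z[1/p]$; this should emerge from applying the preceding estimate inductively at successive levels, forcing the sequence $\{\nu_e/p^e\}$ to acquire a $p$-power denominator once the weighted-degree defect becomes sufficiently tight. Second, the construction of $\vartheta$ in characteristic $p$ is delicate: the multinomial coefficients arising from applying a divided-power differential operator to $f^{\nu_L + 1}$ via the Leibniz rule may vanish modulo $p$, so the level $e$ and the $p$-adic digits of $\nu_L + 1$ must be chosen so that Lucas's theorem~\eqref{eq:luc} guarantees the relevant coefficients are nonzero. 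Executing this controlled cancellation analysis, while using the Euler identity to bridge Jacobian elements with powers of $f$, is the technical heart of the proof.
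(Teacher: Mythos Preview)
Your overall strategy---upper bound from Theorem~\ref{thm:F-the of WHideal}(1), lower bound from the isolated-singularity hypothesis, then read off the arithmetic of $c$---matches the paper. But the two ``obstacles'' you flag at the end are precisely where your argument is incomplete, and the paper dissolves both with a single observation you are missing.

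The key trick is to condition on the $e$-th base-$p$ digit $c^{(e)}$ of $c=\cpt{\mf{m}(\B{a})}(f)$. By Lemma~\ref{lem:F-thre in reg}(2), $\nu_e=p^e[c]_e$, so $\nu_e\equiv c^{(e)}\bmod p$. Whenever $c^{(e)}\neq p-1$ one has $p\nmid \nu_e+1$, so the \emph{first-order} derivative $\partial_{x_i}$ (which lies in $\dle{\kk[\B{x}]}$ for all $e\geq 1$) applied to $f^{\nu_e+1}\in\mf{m}(p^e\B{a})$ gives $(\nu_e+1)f^{\nu_e}\partial_{x_i}f\in\mf{m}(p^e\B{a})$, hence $f^{\nu_e}\partial_{x_i}f\in\mf{m}(p^e\B{a})$ for every $i$. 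Thus $f^{\nu_e}\in(\mf{m}(p^e\B{a}):\jac f)\setminus\mf{m}(p^e\B{a})$, and the socle-degree argument you sketched (via $\kk[\B{x}](\B{w},nd-2|\B{w}|+1)\subseteq\jac f$, exactly as in the proof of Theorem~\ref{thm:F-the of WHideal}(2.b)) yields $\nu_e d\geq p^e\wno{\B{a}}+|\B{w}|-nd$, i.e.\ $p^e[\lambda_0]_e-p^e[c]_e\leq n$. There is no divided-power operator to build and no Lucas bookkeeping; your second obstacle simply does not arise.

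Your first obstacle then resolves itself in the opposite logical order from what you propose. One does not first locate $L$ with $p^Lc\in\z$ and then prove the bound there; rather, the bound above holds at \emph{every} $e$ with $c^{(e)}\neq p-1$, and since the sequence $\{p^e[\lambda_0]_e-p^e[c]_e\}_{e}$ is non-decreasing and unbounded when $c<\lambda_0$ (this is \cite[Lemma~2.7]{HZ16}), only finitely many $e$ can satisfy $c^{(e)}\neq p-1$. Taking $L$ to be the last such $e$ gives $c=[c]_L+p^{-L}\in\z[1/p]$ and the stated form $c=[\lambda_0]_L-E/p^L$ with $E\in\{0,\dots,n-1\}$. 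Your inductive scheme for extracting $L$ is circular as written, because the estimate you want to iterate already presupposes you are at a level where the relevant coefficient is a unit.
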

	
	\begin{proof}
		For simplicity we write $\lambda=\frac{\wno{\B{a}}}{d}$.
		We have seen above that $\cpt{\mf{m}(\B{a})}(f)\leq\lambda\leq 1$. 
		
		If $\cpt{\mf{m}(\B{a})}(f)<\lambda$, we claim that there exists an integer $L\in\nt^+$ such that $\cpt{\mf{m}(\B{a})}(f)\dit{e}=p-1$ for all $e>L$. The idea of the proof is to show that if $\cpt{\mf{m}(\B{a})}(f)\dit{e}\neq p-1$, then
		\begin{equation}\label{eq:diff lam and cpt}
			p^e[\lambda]_e-p^e[\cpt{\mf{m}(\B{a})}(f)]_e\leq n.
		\end{equation}
		From \cite[Lemma 2.7]{HZ16}, the sequence $\{p^e\tre{\lambda}-p^e\tre{\cpt{\mf{m}(\B{a})}(f)}\}_{e\in\nz}$ is non-negative, non-decreasing and unbounded. There are only finitely many $e\in\nz$ such that $\cpt{\mf{m}(\B{a})}(f)\dit{e}\neq p-1$. We choose $L$ to be the maximum of these $e$. Hence
		\begin{equation*}
			\cpt{\mf{m}(\B{a})}(f)=[\cpt{\mf{m}(\B{a})}(f)]_L+\frac{1}{p^L}=[\lambda]_L-\frac{E}{p^L},\quad\text{where}\; E=p^L[\lambda]_L-p^L[\cpt{\mf{m}(\B{a})}(f)]_L-1\in\{0,\dots,n-1\}.
		\end{equation*}
		
		Now we prove that if $\cpt{\mf{m}(\B{a})}(f)\dit{e}\neq p-1$, then \eqref{eq:diff lam and cpt} holds. Let $A_e$ denote $\nu^{\mf{m}(\B{a})}_f(p^e)$, by (2) of Lemma \ref{lem:F-thre in reg},
		$A_e=p^e\tre{\cpt{\mf{m}(\B{a})}(f)}\equiv \cpt{\mf{m}(\B{a})}(f)\dit{e}\bmod p$.
		Hence, if $\cpt{\mf{m}(\B{a})}(f)\dit{e}\neq p-1$, then $p\nmid A_e+1$. Choose $N_1\in\nz\backslash p\z$ such that $N_1\bmod p$ is the inverse of $A_e+1\bmod p$ in $\fp^\times$. 
		Applying the operator $\frac{\partial}{\partial x_i}$ to $f^{A_e+1}$, we obtain
		\begin{equation*}
			f^{A_e}\frac{\partial f}{\partial x_i}=N_1\frac{\partial f^{A_e+1}}{\partial x_i}\in \frac{\partial}{\partial x_i}(\mf{m}(p^e\B{a}))\subseteq\mf{m}(p^e\B{a}),\quad i=1,\dots,n.
		\end{equation*}
		This is because $\frac{\partial}{\partial x_i}\in \dle{\kk[\B{x}]}$. Thus $f^{A_e}\in (\mf{m}(p^e\B{a}):\jac{f})\backslash\mf{m}(p^e\B{a})$. By \cite[Setup 4.5]{HZ16},
		\begin{equation*}
			\kk[\B{x}]\left(\B{w},nd-2|\B{w}|+1\right)\subseteq \jac{f}.
		\end{equation*}
		Proceeding in a manner similar to the proof of (2.b) of Theorem \ref{thm:F-the of WHideal}, we show that
		\begin{equation}\label{eq:nu^gam_f>}
			\wtd{f^{A_e}}=A_ed\geq p^e\wno{\B{a}}+|\B{w}|-nd.
		\end{equation}
		Therefore $A_e\geq p^e[\lambda]_e-n$.
	\end{proof}
	
	\begin{remark}\label{rmk:Pro3 Spe}
		Even if $f$ is not weighted homogeneous, we still have $ \kk[\B{x}](\B{w},N_0+1)\subseteq \jac f$ for some $ N_0$. Then $\cpt{\mf{m}(\B{a})}(f)\dit{e}\neq p-1$ yields $\nu^{\mf{m}(\B{a})}_f(p^e)\wxd{f}\geq p^e\wno{\B{a}}-|\B{w}|-N_0$.
	\end{remark}
	
	This theorem is motivated by \cite[Theorem 3.5]{HZ16}. We generalize it. In \cite{HZ16}, Hern\'andez and co-authors treated the particular case when $\B{a}=\B{1}_n$ (We call $\mr{fpt}(f)=\cpt{\mf{m}(\B{1}_n)}(f)$ the $F$-pure threshold) and provided a more explicit description of values of $L$ and $E$. But they used the property that $\mr{fpt}(f)\dit{1}=\min\{\mr{fpt}(f)\dit{e}:e\in\nz\}$, which does not hold in our case. Fortunately, we do not need exact descriptions of $L$ and $E$ to compute \Nbs roots of $f$.

	\begin{theorem}\label{thm:BSR of spec f}
		Fix an $\nt$-grading on $\kk[\B{x}]$ induced by $\B{w}\in(\nz)^n$. We consider a polynomial $f\in\mf{m}$ satisfying $\eqref{eq:iso sin}$. If $\ideala{f^m}\ppdm{e}$ is a monomial ideal for all $e\in\nt$ and $m\in \textbf{N}_1(e)$, then
		\begin{equation*}
			\textbf{BSR}(f)=\{-1\}\cup\{-\cpt{\mf{m}(\B{a})}(f):\B{a}\in(\nz)^n,\wno{\B{a}}\leq \wxd{f},\cpt{\mf{m}(\B{a})}(f)\in\z_{(p)}\}.
		\end{equation*}
	\end{theorem}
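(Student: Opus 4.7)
The plan is to reduce to computing $\textbf{FJ}(f)\cap(0,1]\cap\z_{(p)}$ via Theorem \ref{thm:FJ and BSR}, which yields $\textbf{BSR}(f)=\{-\lambda:\lambda\in\textbf{FJ}(f)\cap(0,1]\cap\z_{(p)}\}$. The summand $\{-1\}$ is accounted for by $1\in\textbf{FJ}(f)$, which one verifies using Skoda-type identities for test ideals under the isolated-singularity hypothesis on $f$. The containment $\supseteq$ for the remaining elements is immediate from Lemma \ref{lem:FJ and Fthe}(2): whenever $\B a\in(\nz)^n$ satisfies $\wno{\B a}\leq\wxd f=d$ and $\cpt{\mf m(\B a)}(f)\in\z_{(p)}$, Theorem \ref{thm:F-the of WHideal}(1) forces $\cpt{\mf m(\B a)}(f)\leq\wno{\B a}/d\leq 1$, so $\cpt{\mf m(\B a)}(f)\in\textbf{FJ}(f)\cap(0,1]\cap\z_{(p)}$ and hence $-\cpt{\mf m(\B a)}(f)\in\textbf{BSR}(f)$.

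For the nontrivial inclusion $\subseteq$, take $\lambda\in\textbf{FJ}(f)\cap(0,1)\cap\z_{(p)}$. Under the monomiality hypothesis on $\ideala{f^m}\ppdm e$, each test ideal $\tau(f,c)=\ideala{f^{\ceil{cp^e}}}\ppdm e$ (stable for $e\gg 0$ by Definition \ref{def:test ideals}) is a monomial ideal. By discreteness of $F$-jumping exponents, $\tau(f,c)$ is constant on some interval $(\lambda-\delta,\lambda)$ and strictly contains $\tau(f,\lambda)$, so one may choose a monomial $\B x^{\B b}\in\tau(f,\lambda-\delta/2)\setminus\tau(f,\lambda)$; this choice forces $c_{\B b}:=\sup\{c\geq 0:\B x^{\B b}\in\tau(f,c)\}=\lambda$. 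The key combinatorial step is the identity $c_{\B b}=\cpt{\mf m(\B b+\B 1_n)}(f)$: for any monomial ideal $J\subseteq\kk[\B x]$, $\B x^{\B b}\in J$ if and only if $J$ contains some $\B x^{\B c}$ with $\B c\leq\B b$ (multiply by $\B x^{\B b-\B c}$), if and only if $J\not\subseteq\mf m(\B b+\B 1_n)$. Applied to $J=\ideala{f^\nu}\ppdm e$, this identifies $\max\{\nu:\B x^{\B b}\in\ideala{f^\nu}\ppdm e\}$ with $\nu^{\mf m(\B b+\B 1_n)}_f(p^e)$, and passage to the limit $e\to\infty$ gives the identity. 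Setting $\B a=\B b+\B 1_n$, we obtain $\lambda=\cpt{\mf m(\B a)}(f)$.

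It remains to enforce the weight bound $\wno{\B a}=\wno{\B b}+|\B w|\leq d$. Since $f$ is weighted homogeneous of degree $d$, every term of $f^\nu$ has weighted degree exactly $\nu d$; hence a minimal monomial generator $\B x^{\B b}$ of $\ideala{f^\nu}\ppdm e$ arises from a slice $c_{\B u}^{p^e}\B x^{p^e\B b+\B u}$ of $f^\nu$ with $\B u\in\textbf{N}_n(e)$, yielding $p^e\wno{\B b}+\wno{\B u}=\nu d$. Taking $\nu=\ceil{(\lambda-\varepsilon)p^e}$ and sending $\varepsilon\to 0$, $e\to\infty$ gives $\wno{\B b}\leq\lambda d-\wno{\B u}/p^e$. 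Combined with Theorem \ref{thm:F-thre of f}, which restricts $\cpt{\mf m(\B a)}(f)$ for $\wno{\B a}\leq d$ to $\{\wno{\B a}/d\}\cup\{[\wno{\B a}/d]_L-E/p^L\}$, one selects a slice index $\B u$ of sufficiently large weight (using the containment $\kk[\B x](\B w,nd-2|\B w|+1)\subseteq\jac f$ that stems from $\jac f$ being $\mf m$-primary) to force $\wno{\B b}\leq d-|\B w|$. The main obstacle of the argument is precisely this weight estimate, which must be established by carefully coupling the slice decomposition of $f^\nu$ with the inductive bound underlying the proof of Theorem \ref{thm:F-thre of f}.
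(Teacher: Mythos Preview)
Your reduction to $\textbf{FJ}(f)\cap(0,1]\cap\z_{(p)}$ via Theorem~\ref{thm:FJ and BSR}, the observation that the monomiality hypothesis makes every $\tau(f,c)$ a monomial ideal, and your combinatorial identification $\lambda=\cpt{\mf m(\B b+\B 1_n)}(f)$ (which the paper obtains by citing \cite{QG21N} and \cite{BMS08}) are all correct and match the paper's argument closely.

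The genuine gap is the weight bound $\wno{\B a}\leq\wxd f$. Your sketch has three problems. First, you assume $f$ is weighted homogeneous of degree $d$, but the theorem only assumes the isolated-singularity condition~\eqref{eq:iso sin}; the weighted-homogeneous case is the subsequent Corollary~\ref{cor:thmB}. Second, even granting weighted homogeneity, your slice argument yields at best $\wno{\B b}\leq\lambda d<d$, whereas you need $\wno{\B a}=\wno{\B b}+|\B w|\leq d$; the deficit $|\B w|$ is not recovered by ``selecting a slice index of large weight'', because the slice indices $\B u$ appearing in a \emph{minimal} generator of $\ideala{f^\nu}\ppdm e$ have no a priori lower bound. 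Third, invoking Theorem~\ref{thm:F-thre of f} is circular: that theorem already assumes $d\geq\wno{\B a}$, which is exactly what you are trying to prove.

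The paper's route is entirely different and much cleaner: argue by contradiction. If $\wno{\B a}>\wxd f$, then Remark~\ref{rmk:Pro3 Spe} (the non-homogeneous version of the estimate~\eqref{eq:nu^gam_f>}) gives, whenever $\lambda\dit{e}\neq p-1$, the inequality $\nu_f^{\mf m(\B a)}(p^e)\wxd f\geq p^e\wno{\B a}-|\B w|-N_0$, hence $\nu_f^{\mf m(\B a)}(p^e)\geq p^e(1+C_0)-\text{const}$ for some $C_0>0$. But $\lambda<1$ forces $\nu_f^{\mf m(\B a)}(p^e)\leq p^e-1$, so $\lambda\dit{e}=p-1$ for all $e$ beyond some $e_0$. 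This means $p^{e_0}\lambda\in\z$, contradicting $\lambda\in\z_{(p)}\setminus\z$. You should replace your final paragraph with this contrapositive argument.
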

	
	\begin{proof}
		As $\ideala{f^{p^e-1}}\ppdm{e}\supsetneqq\ideala{f^{p^e}}\ppdm{e}=\ideala{f}$, so $p^e-1\in\nu^\bullet_f(p^e)=\mc{B}^\bullet_f(p^e)$, then $-1$, limit of $\{p^e-1\}_{e\in\nt}$ in $\zp$, is a \Nbs root of $f$. By Theorem \ref{thm:FJ and BSR}, it is enough to show that every $F$-jumping exponent $\lambda$ of $I$ lying in the interval $(0,1)$ has the form $\cpt{\mf{m}(\B{a})}(f)$ for some $\B{a}\in(\nz)^n$ with $\wno{\B{a}}\leq\wxd{f}$. 
		
		The condition that $\ideala{f^m}\ppdm{e}$ is a monomial ideal for every $m\in \textbf{N}_1(e)$ implies that for any $c\in(0,1)$, test ideal $\tau(f,c)$ is a monomial ideal, as for sufficiently large $e$, there is $\ceil{cp^e}\leq p^e-1$ and $\tau(f,c)=\ideala{f^{\ceil{cp^e}}}\ppdm{e}$. Assume that $\textbf{FJ}(f)\cap[0,1)=\{\lambda\ud{0},\lambda\ud{1},\dots,\lambda\ud{m}\}$ with $\lambda\ud{0}=0,\lambda\ud{i}<\lambda\ud{i+1}$. For each $\lambda\ud{i}>0$, 
		there is an monomial $\B{x}^{\B{b}}\in\tau(f,\lambda\ud{i-1})\backslash\tau(f,\lambda\ud{i})$ ($\B{b}$ can be $\B{0}_n$). By \cite[Lemma 3.3]{QG21N}, we have
		\begin{equation*}
			\tau(f,\lambda\ud{i})\subseteq\mf{m}(\B{b}+\B{1}_n)\subsetneqq\tau(f,\lambda\ud{i-1}).
		\end{equation*}
		Write $\B{a}=\B{b}+\B{1}_n$. Using \cite[Corollary 2.30]{BMS08}, $\cpt{\mf{m}(\B{a})}(f)\in[\cpt{\tau(f,\lambda\ud{i-1})}(f),\cpt{\tau(f,\lambda\ud{i})}(f)]=[\lambda\ud{i-1},\lambda\ud{i}]$ is an $F$-\\jumping exponent of $I$. $\cpt{\mf{m}(\B{a})}(f)\neq \lambda\ud{i-1}$, otherwise $\tau(f,\lambda\ud{i-1})=\tau(f,\cpt{\mf{m}(\B{a})}(f))\subseteq\mf{m}(\B{a})$. So $\lambda\ud{i}=\cpt{\mf{m}(\B{a})}(f)$.
		
		Now we show that if $\lambda_i\in\z_{(p)}$, then $\wno{\B{a}}\leq \wxd{f}$. Otherwise, let
		\begin{equation*}
			C_0=\min\{\frac{\wno{\B{u}}}{\wxd{f}}-1:\B{u}\in(\nz)^n,\wno{\B{u}}>\wxd{f}\}>0.
		\end{equation*}
		By Remark \ref{rmk:Pro3 Spe}, there exists an integer $N_0$ such that if $\lambda\ud{i}\dit{e}\neq p-1$, then
		\begin{equation*}
			\nu^{\mf{m}(\B{a})}_f(p^e)-p^e\geq p^eC_0-\frac{|\B{w}|+N_0}{\wxd{f}}.
		\end{equation*}
		As $\lambda\ud{i}=\cpt{\mf{m}(\B{a})}(f)<1$, by (2) of Lemma \ref{lem:F-thre in reg}, $\nu^{\mf{m}(\B{a})}_f(p^e)\leq p^e-1$. Choose $e_0=\ceil{\log_p (\B{w}+N_0)-\log_p C_0\wxd{f}}$. Then we have $\lambda\ud{i}=[\lambda\ud{i}]_{e_0}+p^{-e_0}<1$. It yields that $[\lambda\ud{i}]_{e_0}<p^{-e_0}(p^{e_0}-1)$ and $\lambda\ud{i}\notin\z_{(p)}$.
	\end{proof}
	
	\begin{corollary}\label{cor:thmB}
		Suppose that the polynomial $f\in\kk[\B{x]}$ is \textbf{WH} of type $(\B{w},d)$. If $f$ satisfies \eqref{eq:iso sin} and $\ideala{f^m}\ppdm{e}$ is a monomial ideal for all $e\in\nt$ and $m\in \textbf{N}_1(e)$, then we have
		\begin{equation*}
			\textbf{BSR}(f)=\{-1\}\cup\{-\frac{\wno{\B{a}}}{d}:\B{a}\in(\nz)^n,\wno{\B{a}}\leq d,\cpt{\mf{m}(\B{a})}(f)=\frac{\wno{\B{a}}}{d}\in\z_{(p)}\}.
		\end{equation*}
	\end{corollary}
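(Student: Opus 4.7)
The plan is to combine Theorem \ref{thm:BSR of spec f} with Theorem \ref{thm:F-thre of f}. Since $f$ is weighted homogeneous of type $(\B{w},d)$, we have $\wxd{f}=d$, so Theorem \ref{thm:BSR of spec f} already produces
\begin{equation*}
	\textbf{BSR}(f)=\{-1\}\cup\{-\cpt{\mf{m}(\B{a})}(f):\B{a}\in(\nz)^n,\wno{\B{a}}\leq d,\cpt{\mf{m}(\B{a})}(f)\in\z_{(p)}\}.
\end{equation*}
Thus it suffices to verify that whenever $\B{a}\in(\nz)^n$ satisfies $\wno{\B{a}}\leq d$ and $\cpt{\mf{m}(\B{a})}(f)\in\z_{(p)}$, the equality $\cpt{\mf{m}(\B{a})}(f)=\wno{\B{a}}/d$ holds; the converse inclusion is immediate since any tuple permitted by the statement plugs straight into Theorem \ref{thm:BSR of spec f}.

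Writing $\lambda=\wno{\B{a}}/d$, the first step is to invoke Theorem \ref{thm:F-thre of f}, whose dichotomy says that either $\cpt{\mf{m}(\B{a})}(f)=\lambda$ or $\cpt{\mf{m}(\B{a})}(f)=[\lambda]_L-E/p^L$ for some $L\in\nz$ and $E\in\{0,\dots,n-1\}$. The first alternative immediately lands us in the desired set, so the real work is to rule out the second alternative under the hypothesis $\cpt{\mf{m}(\B{a})}(f)\in\z_{(p)}$.

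This exclusion is the main obstacle and requires a careful accounting of non-terminating base $p$ expansions. Tracing the proof of Theorem \ref{thm:F-thre of f}, the integer $L$ is chosen as the maximum index with $\cpt{\mf{m}(\B{a})}(f)\dit{L}\neq p-1$, and the identity $\cpt{\mf{m}(\B{a})}(f)=[\cpt{\mf{m}(\B{a})}(f)]_L+1/p^L$ holds. Setting $j=p^L[\cpt{\mf{m}(\B{a})}(f)]_L\in\{0,1,\dots,p^L-1\}$, we obtain $\cpt{\mf{m}(\B{a})}(f)=(j+1)/p^L$. For this rational number to lie in $\z_{(p)}$, one needs $p^L\mid j+1$; since $1\leq j+1\leq p^L$, the only possibility is $j+1=p^L$, which forces $\cpt{\mf{m}(\B{a})}(f)\dit{i}=p-1$ for every $1\leq i\leq L$. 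This directly contradicts the choice of $L$, so the second alternative is impossible.

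With the exclusion in hand, the corollary falls out: the hypothesis $\cpt{\mf{m}(\B{a})}(f)\in\z_{(p)}$ forces $\cpt{\mf{m}(\B{a})}(f)=\wno{\B{a}}/d$, matching exactly the set displayed in the statement, while the reverse inclusion is simply the application of Theorem \ref{thm:BSR of spec f} to such $\B{a}$.
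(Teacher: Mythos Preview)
Your proof is correct and follows the same strategy as the paper: apply Theorem \ref{thm:BSR of spec f} and then use Theorem \ref{thm:F-thre of f} to reduce the condition $\cpt{\mf{m}(\B{a})}(f)\in\z_{(p)}$ to the equality $\cpt{\mf{m}(\B{a})}(f)=\wno{\B{a}}/d$. The paper's own proof is terse and leaves implicit exactly the step you spell out, namely that the second alternative of Theorem \ref{thm:F-thre of f} forces $\cpt{\mf{m}(\B{a})}(f)\notin\z_{(p)}$.

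One small remark: you obtain this exclusion by tracing through the proof of Theorem \ref{thm:F-thre of f} to extract the identity $\cpt{\mf{m}(\B{a})}(f)=[\cpt{\mf{m}(\B{a})}(f)]_L+1/p^L$. That works, but you could argue more directly from the statement alone. If $\cpt{\mf{m}(\B{a})}(f)=[\lambda]_L-E/p^L$, then $p^L\cpt{\mf{m}(\B{a})}(f)\in\z$, while $0<\cpt{\mf{m}(\B{a})}(f)\leq[\lambda]_L<1$; hence $p^L\cpt{\mf{m}(\B{a})}(f)$ is an integer strictly between $0$ and $p^L$, so it cannot be divisible by $p^L$, and $\cpt{\mf{m}(\B{a})}(f)\notin\z_{(p)}$. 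This avoids reopening the proof of the cited theorem.
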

	
	\begin{proof}
		Theorem \ref{thm:F-thre of f} tells us that for every $\B{a}\in(\nz)^n$ such that $\wno{\B{a}}\leq d$,
		\begin{equation*}
			\cpt{\mf{m}(\B{a})}(f)=\frac{\wno{\B{a}}}{d}\quad\text{or}\quad \cpt{\mf{m}(\B{a})}(f)\notin\z_{(p)}.
		\end{equation*}
		Combining with Theorem \ref{thm:BSR of spec f}, we complete the proof.
	\end{proof}
	
	The polynomial below satisfies the additional condition of $\ideala{f^m}\ppdm{e}$ in Theorem \ref{thm:BSR of spec f}:
	\begin{equation*}
		f=\sum^n_{i=1}c_i\B{x}^{\B{a}_i}\;\text{with}\;\B{a}_i\in\nt^n,p\nmid \det(G(f)),\quad\text{where}\;G(f)=
		\begin{pmatrix}
			\B{a}_1&\cdots&\B{a}_n
		\end{pmatrix}\in M_n(\z).
	\end{equation*}
	Since $p\nmid\det(G(f))$, for every $e\in\nt$, there is an integer $n_e\in\nt$ such that $p^e\mid (n_e\det(G(f))-1)$. Let $G(f)^*\\\in M_n(\z)$ be the adjugate matrix of $G(f)$. When $m=0$, $\ideala{f^m}\ppdm{e}=\kk[\B{x}]$. When $1\leq m\leq p^e-1$, as
	\begin{equation*}
		f^m=\ssm{\B{b}\in\nt^n,|\B{b}|=m}(\prod^n_{i=1}c_i^{b_i})\B{x}^{G(f)\cdot\B{b}},
	\end{equation*}
	for any $\B{b}_1,\B{b}_2\in\nt^n$ with $|\B{b}_1|=|\B{b}_2|=m$, if $G(f)\B{b}_1-G(f)\B{b}_2\in p^e\z^n$, then
	$$n_eG(f)^*G(f)(\B{b}_1-\B{b}_2)\equiv\B{b}_1-\B{b}_2\bmod p^e\z_n.$$
	Naturally, this implies $\B{b}_1-\B{b}_2\in p^e\z_n$. Note that $|\B{b}_1|=|\B{b}_2|<p^e$, hence $\B{b}_1=\B{b}_2$. Based on this fact and Example \ref{exa:BMS09}, we conclude that
	$\ideala{f^m}\ppdm{e}$ is a monomial ideal.
	
	This condition remains strict. For general polynomials $f$, we need to find \Nbs functional equations for $f$ to estimate \Nbs roots.
	
	\subsection{Bernstein-Sato functional equations for weighted homogeneous polynomials}
	Fix an $\nt$-grading on $\kk[\B{x}]$ induced by the weight vector $\B{w}\in(\nz)^n$. For any integer $e\in\nz$, we define the \Ndpo of level $e$ on $\kk[\B{x}]$.
	\begin{align*}
		P_{\B{w},e}&=\ssm{\B{a}\in\textbf{N}_n(e)}\Z{B}_{p^{e-1}}(\wno{\B{a}}+|\B{w}|)\B{x}^{\B{a}}\dpox{(p^e-1)\B{1}_n}\B{x}^{(p^e-1)\B{1}_n-\B{a}}\\
		&=\ssm{\B{a}\in\textbf{N}_n(e),\B{a}\neq(p^e-1)\B{1}_n}\Z{B}_{p^{e-1}}(\wno{\B{a}}+|\B{w}|)\B{x}^{\B{a}}\dpox{(p^e-1)\B{1}_n}\B{x}^{(p^e-1)\B{1}_n-\B{a}} \in \dle{\kk[\B{x}]}\cdot\mf{m}.
	\end{align*}
	Here $\Z{B}_{p^{e-1}}\in\ceut{\fp}$ is the binomial polynomial (see Lemma \ref{lem:basis of cfp}). The second equality comes from the fact that $\Z{B}_{p^{e-1}}((p^e-1)|\B{w}|+|\B{w}|)=0$. $P_{\B{w},e}$ sends each monomial $\B{x}^{\B{u}}$ to $\Z{B}_{p^{e-1}}(\wno{\B{u}}+|\B{w}|)\B{x}^{\B{u}}$. Hence for any weighted homogeneous polynomial $g$, $P_{\B{w},e}(g)=\Z{B}_{p^{e-1}}(\wtd{g}+|\B{w}|)g$.

	Let $f$ be a non-constant weighted homogeneous polynomial of type $(\B{w},d)$ satisfying \eqref{eq:iso sin}. 
	$\{\B{x}^{\B{u}_i}+\jac{f}\}_{i=0}^m$ is a monomial $\kk$-basis $\{\B{x}^{\B{u}_i}+\jac{f}\}_{i=0}^m$ of $\kk[x]/\jac{f}$ as a finite-dimensional $\kk$-vector space. We assume that $\B{x}^{\B{u}_0}=1$. Let $\mc{S}$ denote the set $\{\wno{\B{u}_i}\}_{i=0}^m$ where each integer appears without repetition. As $\jac{f}$ is a homogeneous ideal in the graded ring $\kk[\B{x}]$, for each monomial $\B{x}^{\B{u}}$, either $\B{x}^{\B{u}}\in\jac{f}$ or $\wno{\B{u}}\in\mc{S}$.
	
	\begin{theorem}\label{thm:BSfun of WH}
		If $\lambda$ is a \Nbs root of $f$, then either $\lambda=-\frac{a}{b}$ with $a,b\in\nz,p\nmid b$ and $p\mid b-a$, or $\lambda=-\frac{c}{d}$ with $c\in\nz$ and $c\equiv\rho\ud{0}+|\B{w}|\bmod p$ for some $\rho\ud{0}\in\mc{S}$.
	\end{theorem}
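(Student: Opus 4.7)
The plan is to exhibit a \Nbs functional equation $\Z{F}\bxf=\Z{P}\cdot f\bxf$ whose LHS function $\Z{F}\in\cut{\fp}$ vanishes precisely at the $p$-adic integers described by (a) and (b); by Corollary~\ref{cor:mthmA} and Corollary~\ref{cor:Basic pro of BS-roots}, every \Nbs root $\lambda$ of $f$ must be a zero of $\Z{F}$, which gives the claim. The central tool is the operator $P_{\B{w},e}\in\textbf{D}^e_R\cdot\mf{m}$, which acts on every \Nwh polynomial $g$ by multiplication by the scalar $\Z{B}_{p^{e-1}}(\wtd{g}+|\B{w}|)\in\fp$. Since $f^{a+1}$ is \textbf{WH} of weight $(a+1)d$, specialising at each integer $a$ gives
\begin{equation*}
P_{\B{w},e}\cdot f\bxf \;=\; \Z{B}_{p^{e-1}}\bigl(d(s+1)+|\B{w}|\bigr)\cdot f\bxf\quad\text{in }\cut{R_f}\bxf.
\end{equation*}
The right-hand side still carries the factor $f$, so this is not yet a \Nbs functional equation. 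To remove it I would invoke the auxiliary identity
\begin{equation*}
(s+1)\cdot\partial\ud{x_i}(f)\,\bxf \;=\; \partial\ud{x_i}\cdot f\bxf,
\end{equation*}
which follows from $\partial\ud{x_i}(f^{a+1})=(a+1)f^a\partial\ud{x_i}(f)$. It implies that $(s+1)g\bxf\in\cut{D_R}\cdot f\bxf$ for every $g\in\jac{f}$.

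Next I would expand the rightmost factor $\B{x}^{(p^e-1)\B{1}_n-\B{a}}$ of each summand of $P_{\B{w},e}$ against the monomial basis $\{\B{x}^{\B{u}_j}\}$ of $\kk[\B{x}]/\jac{f}$:
\begin{equation*}
\B{x}^{(p^e-1)\B{1}_n-\B{a}} \;=\; \sum_{j}\lambda^{(\B{a})}_j\B{x}^{\B{u}_j} \;+\; \text{element of }\jac{f},
\end{equation*}
with $\lambda^{(\B{a})}_j$ nonzero only when $|\B{u}_j|_{\B{w}}=(p^e-1)|\B{w}|-|\B{a}|_{\B{w}}$, because $\jac{f}$ is a $\B{w}$-homogeneous ideal. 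Splitting accordingly $P_{\B{w},e}=A_e+B_e$ (with $B_e$ collecting all Jacobian contributions), the auxiliary identity yields $(s+1)B_e\cdot\bxf\in\cut{D_R}\cdot f\bxf$; combined with the scalar identity $P_{\B{w},e}\cdot\bxf=\Z{B}_{p^{e-1}}(ds+|\B{w}|)\bxf$, this produces the congruence
\begin{equation*}
(s+1)\Z{B}_{p^{e-1}}(ds+|\B{w}|)\bxf \;\equiv\; (s+1)A_e\cdot\bxf \pmod{\cut{D_R}\cdot f\bxf}.
\end{equation*}
The right-hand side is a sum, indexed by basis monomials $\B{x}^{\B{u}_j}$ of weight $\rho\dit{0}_j\in\mc{S}$, of terms of the form (scalar function of $s$)~$\cdot\,\B{x}^{\B{u}_j}\bxf$. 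Iterating the same procedure---now treating each $\B{x}^{\B{u}_j}\bxf$ as a new \Nwh symbol of weight $\rho\dit{0}_j$, so that $P_{\B{w},e}\cdot\B{x}^{\B{u}_j}\bxf=\Z{B}_{p^{e-1}}(ds+\rho\dit{0}_j+|\B{w}|)\B{x}^{\B{u}_j}\bxf$---produces one additional factor $\Z{B}_{p^{e-1}}(ds+\rho+|\B{w}|)$ for every $\rho\in\mc{S}$. Since $\mc{S}$ is finite the iteration terminates, and I obtain a \Nbs functional equation with LHS function
\begin{equation*}
\Z{F}(s)\;=\;(s+1)\prod_{\rho\in\mc{S}}\Z{B}_{p^{e-1}}\bigl(ds+\rho+|\B{w}|\bigr).
\end{equation*}

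Taking $e=1$ so that $\Z{B}_1(n)=n\bmod p$, any \Nbs root $\lambda\in\zp$ must satisfy $\Z{F}(\lambda)=0$, forcing either $\lambda+1\equiv 0\bmod p$ (giving case~(a): $\lambda=-a/b$ with $p\nmid b$ and $p\mid b-a$) or $d\lambda+\rho+|\B{w}|\equiv 0\bmod p$ for some $\rho\in\mc{S}$, which, writing $\lambda=-c/d$, reads $c\equiv\rho\dit{0}+|\B{w}|\bmod p$ (case~(b)). The main obstacle will be the iterative reduction on the $A_e$-type terms: one must verify that applying the scheme to each $\B{x}^{\B{u}_j}\bxf$ again splits cleanly into a Jacobian part (absorbable via the $(s+1)$-identity) and a basis part indexed by $\mc{S}$, and that the process terminates precisely after extracting one factor per element of $\mc{S}$. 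Careful bookkeeping of the scalars $\lambda^{(\B{a})}_j$ together with their weighted-degree constraints is needed; moreover, establishing in case~(b) that $\lambda$ actually has the form $-c/d$ with $c\in\nz$ (rather than merely satisfying the mod-$p$ congruence $d\lambda\equiv-(\rho\dit{0}+|\B{w}|)\bmod p$) may require combining the above functional equation with the $F$-threshold analysis of Theorem~\ref{thm:F-thre of f}, which constrains $F$-jumping exponents---and hence \Nbs roots via Theorem~\ref{thm:FJ and BSR}---to have denominator either dividing $d$ or a power of $p$.
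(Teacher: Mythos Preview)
Your construction of the functional equation is essentially the paper's: the iterative reduction you describe is exactly the descending induction of the paper's Step~2, and you arrive at the same annihilating function
\[
\Z{F}_e(s)\;=\;(\Z{B}_1+1)\prod_{\rho\in\mc{S}}\Z{B}_{p^{e-1}}\bigl(ds+\rho+|\B{w}|\bigr)\in\ann{\cut{\fp}}N_f.
\]
The difference, and the real gap, is in how case~(b) is finished. Using only $e=1$ gives the congruence $d\lambda+\rho_0+|\B{w}|\equiv 0\bmod p$ for some $\rho_0\in\mc{S}$, but it does \emph{not} force $d\lambda\in\z$. Your proposed remedy---invoking Theorem~\ref{thm:F-thre of f}---does not work: that theorem only controls the $F$-thresholds $\cpt{\mf{m}(\B{a})}(f)$, and showing that every $F$-jumping exponent in $(0,1)$ is of this form requires the extra monomial-ideal hypothesis of Theorem~\ref{thm:BSR of spec f}, which is absent here.

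The paper's way out is to use the whole family $\{\Z{F}_e\}_{e\ge 1}$ at once. If $\lambda$ is not in case~(a), then for every $e\ge 0$ there is $\rho_e\in\mc{S}$ with $\Z{B}_{p^{e}}(d\lambda+\rho_e+|\B{w}|)=0$, i.e.\ the $e$-th $p$-adic digit of $d\lambda+\rho_e+|\B{w}|$ vanishes. Fix $e_0$ with $\rho+|\B{w}|<p^{e_0}$ for all $\rho\in\mc{S}$. If some digit $(d\lambda)^{[e_1]}\le p-2$ with $e_1>e_0$, then adding any $\rho+|\B{w}|$ cannot propagate a carry past position $e_1$, so for all $e>e_1$ the $e$-th digit of $d\lambda+\rho_e+|\B{w}|$ equals $(d\lambda)^{[e]}$; the vanishing condition then forces $(d\lambda)^{[e]}=0$ for all such $e$, hence $d\lambda\in\nt$, contradicting $\lambda<0$. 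Therefore $(d\lambda)^{[e]}=p-1$ for all $e>e_0$, which means $d\lambda\in\z$ (indeed a negative integer), and now the $e=0$ instance gives the claimed congruence $c\equiv\rho_0+|\B{w}|\bmod p$ with $\lambda=-c/d$. So the missing idea is not an appeal to $F$-thresholds but this $p$-adic digit argument across all levels $e$.
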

	
	\begin{proof} 
		Resulting from Section \ref{subsec:Direct} and \cite[Proposition 7.12]{JNQ23}, we need to find those functions $\Z{F}\in\cut{\fp}$ such that $\Z{F}\bxf\in\cut{D_R}\cdot f\bxf$.
		
		\noindent\textbf{Step 1:} Fix integers $a,b\in\z$ and $k\in\nt$, $\Z{B}_{k,a,b}$ denotes the continuous function from $\zp$ to $\fp$ such that
		\begin{equation*}
			\Z{B}_{k,a,b}(\mc{x})=\Z{B}_k(a\mc{x}+b).
		\end{equation*}
		For any weighted homogeneous polynomial $g$, if $P_{\B{w},e}\cdot g\bxf=\Z{U}\bxf$ in $\cut{R_f}\bxf$ with $\Z{U}\in\cut{R_f}$, note that for any $l\in\nt$, $P_{\B{w},e}(gf^l)=\Z{B}_{p^{e-1}}(ld+\wtd{g}+|\B{w}|)gf^l$, then we have 
		\begin{equation}\label{eq:Th ufs}
			\Z{U}|_{\nt}=(\Z{B}_{p^{e-1},d,\wtd{g}+|\B{w}|}\cdot g)|_{\nt}\quad\text{and}\quad \Z{U}=\Z{B}_{p^{e-1},d,\wtd{g}+|\B{w}|}\cdot g.
		\end{equation}
		
		\noindent\textbf{Step 2:} We claim that for any monomial $\B{x}^{\B{u}}\in\kk[\B{x}]$ such that $\wno{\B{u}}\in\mc{S}$, there is
		\begin{equation}\label{eq:Step 2}
			\prod\ud{\rho\in\mc{S},\rho\geq\wno{\B{u}}}\Z{B}_{p^{e-1},d,\rho+|\B{w}|}\cdot \B{x}^{\B{u}}\bxf\in\cut{D_R}\cdot\jac{f}\bxf,\quad\text{for all}\;e\in\nz.
		\end{equation}
		We prove this by descending induction on $\wno{\B{u}}$. If $\wno{\B{u}}=\max\{\rho:\rho\in\mc{S}\}$, then $\mf{m}\cdot\B{x}^{\B{u}}\in\jac{f}$. Then by \eqref{eq:Th ufs}, there is
		\begin{equation*}
			\Z{B}_{p^{e-1},d,\wno{\B{u}}+|\B{w}|}\cdot \B{x}^{\B{u}}\bxf=P_{\B{w},e}\cdot \B{x}^{\B{u}}\bxf\in\cut{D_R}\cdot\jac{f}\bxf.
		\end{equation*}
		Now we consider the general case. For each $\B{a}\in\textbf{N}_n(e)$ such that $\B{a}\neq(p^e-1)\B{1}_n$, if $\wno{\B{u}}+\wno{(p^e-1)\B{1}_n-\B{a}}\in\mc{S}$, then by induction,
		\begin{equation*}
			\prod\ud{\rho\in\mc{S},\rho\geq\wno{\B{u}}+\wno{(p^e-1)\B{1}_n-\B{a}}}\Z{B}_{p^{e-1},d,\rho+|\B{w}|} \cdot \B{x}^{(p^e-1)\B{1}_n-\B{a}}\B{x}^{\B{u}}\bxf\in \cut{D_R}\cdot \jac{f}\bxf.
		\end{equation*}
		Multiplying with some $\Z{B}_{p^{e-1},d,\rho+|\B{w}|}$ such that $\rho\in(\wno{\B{u}},\wno{\B{u}}+\wno{(p^e-1)\B{1}_n-\B{a}})\cap\mc{S}$, we obtain
		\begin{equation*}
			\prod\ud{\rho\in\mc{S},\rho>\wno{\B{u}}}\Z{B}_{p^{e-1},d,\rho+|\B{w}|}\cdot \B{x}^{(p^e-1)\B{1}_n-\B{a}}\B{x}^{\B{u}}\bxf\in \cut{D_R}\cdot \jac{f}\bxf.
		\end{equation*}
		Otherwise, $\B{x}^{(p^e-1)\B{1}_n-\B{a}}\B{x}^{\B{u}}\in\jac{f}$. The equation above still holds. Applying $\Z{B}_{p^{e-1}}(\wno{\B{a}}+|\B{w}|)\B{x}^{\B{a}}\dpox{(p^e-1)\B{1}_n}$ on the left hand side and taking the sum for all $\B{a}\in\textbf{N}_n(e)\backslash\{(p^e-1)\B{1}_n\}$, we obtain \eqref{eq:Step 2}.
		
		\noindent \textbf{Step 3:} Based on the conclusions above, let $\B{x}^{\B{u}}=1$, then for each $e\in\nz$, there is
		\begin{equation*}
			(\Z{B}_1+1)\prod\ud{\rho\in\mc{S}}\Z{B}_{p^{e-1},d,\rho+|\B{w}|}\bxf\in \cut{D_R}(\Z{B}_1+1)\cdot\jac f\bxf\subseteq\cut{D_R}\cdot f\bxf.
		\end{equation*}
		The last containment comes from the fact that $\partial^{[1]}_{x_i}\cdot f\bxf=(\Z{B}_1+1)\cdot\frac{\partial f}{\partial x_i}\bxf$. So for any $e\in\nz$, there is
		\begin{equation*}
			(\Z{B}_1+1)\prod\ud{\rho\in\mc{S}}\Z{B}_{p^{e-1},d,\rho+|\B{w}|}\in\ann{\cut{\fp}}\frac{\cut{D_R}\cdot\bxf}{\cut{D_R}\cdot f\bxf}.
		\end{equation*}
		
		Let $\lambda$ be a \Nbs root of $f$. Then by Theorem \ref{thm:FJ and BSR}, $\lambda$ has the form $-\frac{a}{b}$ with $a,b\in\nz,(a,b)=1$, $b>a$ and $p\nmid b$. By (1) of Corollary \ref{cor:Basic pro of BS-roots}, $(\Z{B}_1+1)\prod\ud{\rho\in\mc{S}}\Z{B}_{p^{e-1},d,\rho+|\B{w}|}$ vanishes at $\lambda$ for all $e\in\nz$. Note that $\Z{B}_1(\lambda)=(-a\bmod p)(b\bmod p)^{-1}$. If $p\nmid b-a$, then $\Z{B}_1(\lambda)+1\neq 0$. Then for any $e\in\nt$, there exists an $\rho\ud{e}\in\mc{S}$ such that $\Z{B}_{p^e}(\lambda d+\rho\ud{e}+|\B{w}|)=0$.
		For each $\rho\in\mc{S}$, write $\lambda\ud{\rho}=\lambda d+\rho+\B{w}$. Assume that the $p$-adic expression of $\lambda d$ (resp. $\lambda\ud{\rho}$) in $\zp$ is 
		\begin{equation*}
			\lambda d=\ssm{i\in\nt}(\lambda d)^{[i]}p^i,\quad \text{resp.}\; \lambda\ud{\rho}=\ssm{i\in\nt}\lambda\ud{\rho}^{[i]}p^i,\quad\text{with}\;\lambda^{[i]},\lambda\ud{\rho}^{[i]}\in\{0,\dots,p-1\}.
		\end{equation*}
		We choose an integer $e_0\in\nt$ such that all $\{\rho+|\B{w}|:\rho\in\mc{S}\}\subseteq \textbf{N}_1(e_0)$.
		
		\noindent\textbf{Case 1:} There is an integer $e_1>e_0$ such that $\lambda^{[e_1]}\leq p-2$, hence $\lambda\ud{\rho}^{[i]}=(\lambda d)^{[i]}$ for all $i>e_1$ and $\rho\in\mc{S}$. $\Z{B}_{p^e}(\lambda\ud{\rho\ud{e}})=0$ for all $e$ implies that $(\lambda d)^{[i]}=0$ for all $i>e_1$. So $\lambda d\in\nt$, but this is impossible, since $\lambda<0$.
		
		\noindent\textbf{Case 2:} For all $i>e_0$, $(\lambda d)^{[i]}=p-1$. In this situation, $\lambda d=\ideala{\lambda d}_{e_0}-p^{e_0+1}\in-\nt$. We write $\lambda=-\frac{c}{d}$ with $0<c<d$. Then $\Z{B}_1(\lambda\ud{\rho\ud{0}})=0$ yields that $p\mid c-\rho_0-|\B{w}|$.
	\end{proof}

	\section{Thom-Sebastiani properties of Bernstein-Sato roots}\label{sec:TS pro}
	In this section, we take $F$-finite regular rings $R$ and $S$ of characteristic $p$. Let $I$ be an ideal of $R$ and $J$ an ideal of $S$. $\mf{B}=I\ott{\fp}J$ can be seen as an ideal of $Q=R\ott{\fp}S$, generated by $\{f\otimes g\}_{f\in I,g\in J}$. We define $\mf{A}\\=\ideala{I,J}$ to be the ideal $I\ott{\fp}S+R\ott{\fp}J$ of $Q$.

	\begin{lemma}\label{lem:diop on tensor}
		Under the assumptions above, for each $e\in\nt$, there is a $Q$-module isomorphism 
		\begin{equation*}
			\textbf{T}^e_{R,S}:\dle{R}\ott{\fp}\dle{S}\mapsto \dle{Q},\quad \textbf{T}^e_{R,S}(\phi\otimes\psi):f\otimes g\mapsto \phi(f)\otimes\phi(g),
		\end{equation*}
		where the $Q$-module structure on $\dle{R}\ott{\fp}\dle{S}$ is given by 
		\begin{equation*}
			(r\otimes s)(\phi\otimes\psi)=(r\phi)\otimes(s\psi),\quad \text{for all}\;r\in R,s\in S,\phi\in\dle{R},\psi\in\dle{S}.
		\end{equation*}
		These $Q$-module isomorphisms induce a $Q$-algebra isomorphism $\textbf{T}_{R,S}:D_R\ott{\fp}D_S\to D_Q$.
	\end{lemma}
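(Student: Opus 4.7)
The plan is to identify $\textbf{T}^e_{R,S}$ with the canonical Hom--tensor isomorphism for endomorphism rings of finitely generated projective modules. First, since Frobenius is a ring homomorphism in characteristic $p$, the subring $Q^{p^e}\subseteq Q$ is the $\fp$-span of the elements $r^{p^e}\otimes s^{p^e}$, and $\fp$-flatness identifies this with $R^{p^e}\ott{\fp}S^{p^e}$. Hence $\dle{Q}=\End{R^{p^e}\ott{\fp}S^{p^e}}(R\ott{\fp}S)$ (along the way one checks that $Q$ is itself $F$-finite, since a tensor product of finitely generated modules is finitely generated). By Kunz's theorem, already invoked in Section \ref{subsec:Fthres}, the regularity of $R$ together with $F$-finiteness forces $R$ to be a finitely generated projective $R^{p^e}$-module; similarly for $S$ over $S^{p^e}$.

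Next I would invoke the classical fact that, for $\fp$-algebras $A,B$ carrying finitely generated projective modules $M$ and $N$, the natural map
\[
\End{A}(M)\ott{\fp}\End{B}(N)\longrightarrow\End{A\ott{\fp}B}(M\ott{\fp}N),\qquad \phi\otimes\psi\mapsto\bigl(m\otimes n\mapsto\phi(m)\otimes\psi(n)\bigr),
\]
is an isomorphism; one proves this by reducing via additivity and direct summands to the case of free modules, where it is the matrix identification $\mathrm{Mat}_r(A)\ott{\fp}\mathrm{Mat}_s(B)\cong\mathrm{Mat}_{rs}(A\ott{\fp}B)$. Specialized to $(A,M)=(R^{p^e},R)$ and $(B,N)=(S^{p^e},S)$ this is precisely $\textbf{T}^e_{R,S}$. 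The $Q$-linearity asserted in the lemma is then manifest from the explicit formula: the element $r\otimes s\in Q$ acts on either side by post-multiplication, and $\textbf{T}^e_{R,S}$ intertwines these two actions directly.

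For the $Q$-algebra statement, I would observe that the maps $\{\textbf{T}^e_{R,S}\}_{e\in\nt}$ are compatible with the natural inclusions $\dle{R}\hookrightarrow\textbf{D}^{e+1}_R$, $\dle{S}\hookrightarrow\textbf{D}^{e+1}_S$ and $\dle{Q}\hookrightarrow\textbf{D}^{e+1}_Q$ (a direct pointwise check). Since $\ott{\fp}$ commutes with directed colimits and $D_R=\cup_e\dle{R}$ etc., passing to the limit over $e$ yields the desired isomorphism $D_R\ott{\fp}D_S\cong D_Q$. Multiplicativity is immediate: both $\textbf{T}^e_{R,S}((\phi_1\phi_2)\otimes(\psi_1\psi_2))$ and $\textbf{T}^e_{R,S}(\phi_1\otimes\psi_1)\circ\textbf{T}^e_{R,S}(\phi_2\otimes\psi_2)$ send $f\otimes g$ to $\phi_1\phi_2(f)\otimes\psi_1\psi_2(g)$. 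The main technical content of the proof lies in the Hom--tensor identification; while classical, it genuinely uses the finite generation and projectivity of $R$ over $R^{p^e}$ and of $S$ over $S^{p^e}$, and this is exactly where the $F$-finite regular hypothesis on $R$ and $S$ enters.
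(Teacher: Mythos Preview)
Your proof is correct and rests on the same core insight as the paper's: Kunz's theorem together with $F$-finiteness makes $R$ finitely generated projective over $R^{p^e}$ (and likewise for $S$), so the map $\textbf{T}^e_{R,S}$ is an instance of the standard Hom--tensor isomorphism for endomorphism rings of finitely generated projectives, which one checks after reducing to the free case.

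Where the two arguments diverge is in the mechanism of that reduction. The paper passes from projective to free by Zariski localization: it picks elements $\tilde u_i\in R$ and $\tilde v_j\in S$ so that $R_{\tilde u_i}$ is free over $(R_{\tilde u_i})^{p^e}$ and $S_{\tilde v_j}$ is free over $(S_{\tilde v_j})^{p^e}$, then checks that $(\textbf{T}^e_{R,S})_{w_{ij}}$ is an isomorphism for each $w_{ij}=\tilde u_i\otimes\tilde v_j$ by identifying it with $\textbf{T}^e_{R_{\tilde u_i},S_{\tilde v_j}}$ and invoking an external reference (\cite[Lemma 2.56]{QuinPHD}) for the free case. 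Your argument instead stays global and uses the direct-summand characterization of projectives together with additivity of $\Hom$ to reduce to free modules, where the statement is the Kronecker product identity $\mathrm{Mat}_r(A)\ott{\fp}\mathrm{Mat}_s(B)\cong\mathrm{Mat}_{rs}(A\ott{\fp}B)$. Your route is slightly more self-contained (no localization bookkeeping, no citation for the free case), and you also spell out the passage to the colimit and the multiplicativity for the final $D_R\ott{\fp}D_S\cong D_Q$, which the paper leaves implicit.
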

	
	\begin{proof}
		By Kunz's theorem, for each $e\in\nt$, $R$ is $R^{p^e}$-flat. $R^{p^e}$ is Noetherian, so $R$ is a locally free $R^{p^e}$-module. Let $u_1,\dots,u_n\in R^{p^e}$ be such that $\{u_i\}_{i=1}^n$ generate the unit ideal of $R^{p^e}$, and $R_{u_i}$ is $(R^{p^e})_{u_i}$-free. Write $u_i$ as $\tld{u}_i^{p^e}$ for some $u_i\in R$. $\{\tld{u}_i\}_{i=1}^n$ generate the unit ideal of $R$, and $R_{\tld{u}_i}=R_{u_i}$ is free over $(R^{p^e})_{u_i}=(R_{\tld{u}_i})^{p^e}$.
		Repeat the argument for $S$, let $v_1,\dots,v_m\in S^{p^e}$ be such that $\{v_i\}_{i=1}^m$ generate the unit ideal of $S^{p^e}$, and $S_{v_i}$ is $(S^{p^e})_{v_i}$-free. Write $v_i=\tld{v}_i^{p^e}$ for some $\tld{v}_i\in S$. $\{\tld{v}_i\}_{i=1}^m$ generate the unit ideal of $S$, and $S_{\tld{v}_j}$ is $(S_{\tld{v}_j})^{p^e}$-free. Note that $\{w_{ij}=\tld{u}_i\otimes \tld{v}_j\}_{1\leq i\leq n,1\leq j\leq m}$ generate the unit ideal of $Q$, it suffices to show that for every $i$ and $j$,
		\begin{equation*}
			(\textbf{T}^e_{R,S})_{w_{ij}}:Q_{w_{ij}}\ott{Q}(\dle{R}\ott{\fp}\dle{S})\to Q_{w_{ij}}\ott{Q}\dle{Q}
		\end{equation*}
		is an isomorphism. From the ring isomorphism
		\begin{equation}\label{eq:Rf ot Sg}
			R_{\tld{u}_i}\ott{\fp}S_{\tld{v}_j}\cong Q_{w_{ij}},\quad \frac{f}{\tld{u}_i^{n_1}}\otimes\frac{g}{\tld{v}_j^{m_1}}\mapsto\frac{\tld{u}_i^{m_1}g\otimes \tld{v}_j^{n_1}g}{(\tld{u}_i\otimes \tld{v}_j)^{n_1+m_1}},
		\end{equation}
		we obtain the following commutative diagram
		\begin{equation*}
			\begin{tikzcd}
				Q_{w_{ij}}\ott{Q}(\dle{R}\ott{\fp}\dle{S})& &Q_{w_{ij}}\ott{Q}\dle{Q}\\
				\dle{R_{\tld{u}_i}}\ott{\fp}\dle{S_{\tld{v}_j}}& &\dle{R_{\tld{u}_i}\ott{\fp}S_{\tld{v}_j}}.
				\arrow["(\textbf{T}^e_{R,S})_{w_{ij}}",from=1-1,to=1-3]
				\arrow["\textbf{T}^e_{R_{\tld{u}_i},S_{\tld{v}_j}}",from=2-1,to=2-3]
				\arrow["\cong",from=1-1,to=2-1]
				\arrow["\cong",from=1-3,to=2-3]
			\end{tikzcd}
		\end{equation*}
		The left vertical arrow follows from Lemma \ref{lem:local of diff} and \eqref{eq:Rf ot Sg}:
		\begin{align*}
			Q_{w_{ij}}\ott{Q}(\dle{R}\ott{\fp}\dle{S})\cong&
			(R_{\tld{u}_i}\ott{\fp}S_{\tld{v}_j})\ott{Q}(\dle{R}\ott{\fp}\dle{S})\\ 
			\cong&(R_{\tld{u}_i}\ott{R}\dle{R})\ott{\fp}(S_{\tld{v}_j}\ott{S}\dle{S})\cong\dle{R_{\tld{u}_i}}\ott{\fp}\dle{S_{\tld{v}_j}}.
		\end{align*}
		Similarly for the right vertical arrow. The lower horizontal arrow is an isomorphism (\cite[Lemma 2.56]{QuinPHD}). Hence, we complete the proof of the lemma.
	\end{proof}

	\begin{theorem}\label{mthm:C}
		We keep the notation introduced at the beginning of this section.
		\begin{enumerate}
			\item$\textbf{BSR}(\mf{A})=\{\mc{x}+\mc{y}:\mc{x}\in\textbf{BSR}(I),\mc{y}\in\textbf{BSR}(J)\}$.
			\item Suppose that $I\not\subseteq\mr{nil}(R)$ and $J\not\subseteq\mr{nil}(S)$. Then $\textbf{BSR}(\mf{B})=\textbf{BSR}(I)\cup\textbf{BSR}(J)$.
		\end{enumerate}
	\end{theorem}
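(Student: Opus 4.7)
The plan is to reduce Theorem~\ref{mthm:C} to a combinatorial identity on the sets of different jumps $\mathcal{B}^\bullet_\bullet(p^e)$ via Lemma~\ref{lem:diop on tensor}, and then pass to $p$-adic limits. First, I translate ideal powers into tensor form. From $\mf{A}^n=\sum_{i+j=n}I^i\otimes J^j$ and $\mf{B}^n=I^n\otimes J^n$ (viewing each $I^k\otimes J^l$ as its image in $Q$), Lemma~\ref{lem:diop on tensor} combined with the rule $(\phi\otimes\psi)(f\otimes g)=\phi(f)\otimes\psi(g)$ yields
\[
\dle{Q}\cdot\mf{A}^n=\sum_{i+j=n}(\dle{R}\cdot I^i)\otimes(\dle{S}\cdot J^j),\qquad \dle{Q}\cdot\mf{B}^n=(\dle{R}\cdot I^n)\otimes(\dle{S}\cdot J^n).
\]

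The key combinatorial step is to establish
\[
\mathcal{B}^\bullet_{\mf{A}}(p^e)=\mathcal{B}^\bullet_I(p^e)+\mathcal{B}^\bullet_J(p^e)\quad\text{(Minkowski sum)},\qquad \mathcal{B}^\bullet_{\mf{B}}(p^e)=\mathcal{B}^\bullet_I(p^e)\cup\mathcal{B}^\bullet_J(p^e).
\]
For the Minkowski identity, the ``$\supseteq$'' direction is shown by picking witnesses $f\in\dle{R}\cdot I^i\setminus\dle{R}\cdot I^{i+1}$ and $g\in\dle{S}\cdot J^j\setminus\dle{S}\cdot J^{j+1}$ and projecting $Q$ onto the bi-quotient $(R/\dle{R}\cdot I^{i+1})\otimes_{\fp}(S/\dle{S}\cdot J^{j+1})$: every summand $(\dle{R}\cdot I^k)\otimes(\dle{S}\cdot J^l)$ of $\dle{Q}\cdot\mf{A}^{i+j+1}$ dies under this projection because $k+l=i+j+1$ forces $k\geq i+1$ or $l\geq j+1$, while $f\otimes g$ maps to a nonzero element since the tensor of nonzero classes over the field $\fp$ is nonzero. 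The ``$\subseteq$'' direction is immediate: if $\dle{R}\cdot I^i=\dle{R}\cdot I^{i+1}$ (or $\dle{S}\cdot J^{n-i}=\dle{S}\cdot J^{n-i+1}$), then $(\dle{R}\cdot I^i)\otimes(\dle{S}\cdot J^{n-i})$ already sits inside $\dle{Q}\cdot\mf{A}^{n+1}$ via the $(i+1,n-i)$-piece (or the $(i,n-i+1)$-piece). For the union identity the same projection argument works; the non-nilpotence hypothesis $I\not\subseteq\mr{nil}(R)$ and $J\not\subseteq\mr{nil}(S)$ enters precisely to guarantee $\dle{R}\cdot I^n\neq 0$ and $\dle{S}\cdot J^n\neq 0$ for every $n$, ensuring that a nonzero witness tensor exists.

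To pass to $p$-adic limits, the ``$\supseteq$'' halves of both parts are immediate, from continuity of addition on $\zp$ together with the containment $\mathcal{B}^\bullet_I(p^e),\mathcal{B}^\bullet_J(p^e)\subseteq\mathcal{B}^\bullet_{\mf{B}}(p^e)$. For the ``$\subseteq$'' half of part~(1), given a witness sequence $\nu^e_{\mf{A}}=i^e+j^e\to\mc{z}$ with $i^e\in\mathcal{B}^\bullet_I(p^e)$ and $j^e\in\mathcal{B}^\bullet_J(p^e)$, compactness of $\zp\times\zp$ extracts a convergent subsequence $(i^{e_k},j^{e_k})\to(\mc{x},\mc{y})$ with $\mc{x}+\mc{y}=\mc{z}$; finiteness of the BSR sets together with the module-theoretic characterization of Theorem~\ref{thm:ration} forces every cluster point of a jump sequence to lie in the corresponding BSR, so $\mc{x}\in\textbf{BSR}(I)$ and $\mc{y}\in\textbf{BSR}(J)$. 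For part~(2), a pigeonhole argument shows that a convergent sequence $\nu^e\in\mathcal{B}^\bullet_I(p^e)\cup\mathcal{B}^\bullet_J(p^e)$ has infinitely many terms in one of the two subsets, producing a cluster point that by the same reasoning lies in either $\textbf{BSR}(I)$ or $\textbf{BSR}(J)$.

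The main obstacle is the combinatorial key identity, particularly the ``$\supseteq$'' direction of the Minkowski sum, which hinges on setting up the bi-quotient projection correctly, bookkeeping which summands of $\dle{Q}\cdot\mf{A}^{n+1}$ survive it, and invoking the flatness of the tensor product over $\fp$. A secondary subtlety is the subsequence-to-cluster-point step in the $p$-adic passage, which is handled by the finiteness and closedness of BSR from Theorem~\ref{thm:ration}.
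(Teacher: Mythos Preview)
Your approach is essentially the paper's: reduce to the level-$e$ identities $\mc{B}^\bullet_{\mf{A}}(p^e)=\mc{B}^\bullet_I(p^e)+\mc{B}^\bullet_J(p^e)$ and $\mc{B}^\bullet_{\mf{B}}(p^e)=\mc{B}^\bullet_I(p^e)\cup\mc{B}^\bullet_J(p^e)$ via Lemma~\ref{lem:diop on tensor} and the bi-quotient projection, then pass to $p$-adic limits by compactness and pigeonhole. The combinatorial step and its justification match the paper exactly.

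The one place your write-up is not quite right is the ``subsequence-to-BSR'' step. You appeal to ``finiteness of the BSR sets together with the module-theoretic characterization of Theorem~\ref{thm:ration}'', but neither of these tells you that a limit of $\nu_{e_k}\in\mc{B}^\bullet_I(p^{e_k})$ along a \emph{proper infinite subset} $\{e_k\}\subseteq\nt$ is a Bernstein--Sato root; the definition requires a witness at \emph{every} level $e$. The paper fixes this at the very start of the proof by invoking the nesting $\mc{B}^\bullet_I(p^e)\supseteq\mc{B}^\bullet_I(p^{e+1})$ (citing \cite[Lemma~3.11]{JNQ23}): given a convergent subsequence $\nu_{e_k}\to\mc{x}$, for each $e$ with $e_{k-1}<e\le e_k$ set $\nu_e:=\nu_{e_k}\in\mc{B}^\bullet_I(p^{e_k})\subseteq\mc{B}^\bullet_I(p^e)$, producing a full witness sequence. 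With this one-line repair in place (applied symmetrically for $J$, $\mf{A}$, $\mf{B}$), your argument goes through.
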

	
	\begin{proof}
		By \cite[Lemma 3.11]{JNQ23}, for each $e\in\nt$, there is
		\begin{equation*}
			\mc{B}^\bullet_I(p^e)=\{n\in\dle{R}\cdot I^n\supsetneqq\dle{R}\cdot I^{n+1}\}\supseteq\mc{B}^\bullet_I(p^{e+1})=\{n\in\textbf{D}^{e+1}_R\cdot I^n\supsetneqq\textbf{D}^{e+1}_R\cdot I^{n+1}\}.
		\end{equation*}
		So $\mc{x}\in\zp$ is a \Nbs root of $I$ if and only if there is an infinite subset $\{e_i\}_{i\in\nt}$ of $\nt$ and $\nu\ud{e_i}\in\mc{B}^\bullet_I(p^{e_i})$ such that $\mc{x}$ is the limit of $\{\nu\ud{e_i}\}_{i\in\nt}$ in $\zp$. The same conclusion applies to $J$, $\mf{A}$ and $\mf{B}$.
		
		To prove $(1)$, we show that for each $e\in\nt$, there is $\mc{B}^\bullet_\mf{A}(p^e)=\mc{B}^\bullet_I(p^e)+\mc{B}^\bullet_J(p^e)$. It implies that $\textbf{BSR}(\mf{A})\supseteq\textbf{BSR}(I)+\textbf{BSR}(J)$. As $\mf{A}^n=\sum^n_{a=0} I^a\ott{\fp}J^{n-a}$, by Lemma \ref{lem:diop on tensor}, 
		\begin{equation}\label{eq:DQmf{a}}
			\dle{Q}\cdot\mf{A}^n=\sum^n_{a=0}(\dle{R}\ott{\fp}\dle{S})\cdot (I^a\ott{\fp}J^{n-a})=\sum^n_{a=0}(\dle{R}\cdot I^a)\ott{\fp}(\dle{S}\cdot J^{n-a}).
		\end{equation}
		If for every $0\leq a\leq n$, either $a\notin\mc{B}^\bullet_I(p^e)$ or $n-a\notin\mc{B}^\bullet_J(p^e)$, then 
		\begin{equation*}
			(\dle{R}\cdot I^a)\ott{\fp}(\dle{S}\cdot J^{n-a})\subseteq (\dle{R}\cdot I^{a+1})\ott{\fp}(\dle{S}\cdot J^{n-a})+(\dle{R}\cdot I^a)\ott{\fp}(\dle{S}\cdot J^{n+1-a})
		\end{equation*}
		By \eqref{eq:DQmf{a}} we have $n\notin\mc{B}^\bullet_{\mf{A}}(p^e)$. Hence $\mc{B}^\bullet_{\mf{A}}(p^e)\subseteq\mc{B}^\bullet_I(p^e)+\mc{B}^\bullet_J(p^e)$. In the other direction, for any integers $n_1\in\\\mc{B}^\bullet_I(p^e)$ and $n_2\in\mc{B}^\bullet_J(p^e)$, we choose $u\in\dle{R}\cdot I^{n_1}\backslash\dle{R}\cdot I^{n_1+1}$ and $v\in\dle{S}\cdot J^{n_2}\backslash \dle{S}\cdot J^{n_2+1}$. Note that 
		\begin{equation*}
			\dle{Q}\cdot \mf{A}^{n_1+n_2+1}\subseteq\ideala{\dle{R}\cdot I^{n_1+1},\dle{S}\cdot J^{n_2+1}},
		\end{equation*}
		thus $u\otimes v\in\dle{Q}\cdot \mf{A}^{n_1+n_2}\backslash\dle{Q}\cdot \mf{A}^{n_1+n_2+1}$. Otherwise, the image of $u\otimes v$ in $Q/\ideala{\dle{R}\cdot I^{n_1+1},\dle{S}\cdot J^{n_2+1}}\cong\\(R/\dle{R}\cdot I^{n_1+1})\ott{\fp}(S/\dle{S}\cdot J^{n_1+1})$ is zero. But this is impossible. Therefore, $n_1+n_2\in\mc{B}^\bullet_\mf{A}(p^e)$.

		Let $\mc{z}$ be a \Nbs root of $\mf{A}$. There is a sequence $\{\nu\ud{e}\}_{e\in\nt}$ with $\nu\ud{e}\in\mc{B}^\bullet_{\mf{A}}(p^e)$ such that $\nu\ud{e}$ converges to $\mc{z}$. Each $\nu\ud{e}$ has the form $\nu\ud{I,e}+\nu\ud{J,e}$ with $\nu\ud{I,e}\in\mc{B}^\bullet_I(p^e)$ and $\nu\ud{J,e}\in\mc{B}^\bullet_J(p^e)$. As $\zp$ is a compact metrizable space, $\{\nu\ud{I,e}\}_{e\in\nt}$ has a convergent subsequence $\{\nu\ud{I,e_i}\}_{i\in\nt}$. If $\nu\ud{I,e_i}$ converges to $\mc{x}\in\zp$, then $\mc{x}\in\textbf{BSR}(I)$, and $\nu\ud{J,e_i}$ converges to $\mc{z}-\mc{x}\in\textbf{BSR}(J)$. And we get the conclusion $\textbf{BSR}(\mf{A})\subseteq\textbf{BSR}(I)+\textbf{BSR}(J)$.
		
		To prove (2), it suffices to demonstrate that $\mc{B}^\bullet\ud{\mf{B}}(p^e)=\mc{B}^\bullet\ud{I}(p^e)\cup\mc{B}^\bullet\ud{J}(p^e)$ for all $e\in\nt$. As $\mf{B}^n=I^n\ott{\fp}J^n$. By applying Lemma \ref{lem:diop on tensor}, we have
		\begin{equation*}
			\dle{Q}\cdot \mf{B}^n=(\dle{R}\ott{\fp}\dle{S})\cdot \mf{B}^n=(\dle{R}\cdot I^n)\ott{\fp}(\dle{S}\cdot J^n).
		\end{equation*}
		It follows that $\mc{B}^\bullet\ud{\mf{B}}(p^e)\subseteq\mc{B}^\bullet\ud{I}(p^e)\cup\mc{B}^\bullet\ud{J}(p^e)$. 
		
		Now we show the inclusion in the other direction. Given $n_1\in\mc{B}^\bullet\ud{I}(p^e)$, in fact, $\dle{R}\cdot I^{n_1+1}$ and $\dle{S}\cdot J^{n_1}$ are 
		non-zero $\fp$-flat modules, we obtain the following injective homomorphisms
		\begin{equation*}
			(\dle{R}\cdot I^{n_1+1})\ott{\fp}(\dle{S}\cdot J^{n_1+1})\to(\dle{R}\cdot I^{n_1+1})\ott{\fp}(\dle{S}\cdot J^{n_1})\to(\dle{R}\cdot I^{n_1})\ott{\fp}(\dle{S}\cdot J^{n_1}).
		\end{equation*}
		The cokernel of the second homomorphism is given by
		\begin{equation*}
			\frac{\dle{R}\cdot I^{n_1}}{\dle{R}\cdot I^{n_1+1}}\ott{\fp}(\dle{S}\cdot J^{n_1})\neq 0.
		\end{equation*}
		This implies that $(\dle{R}\cdot I^{n_1})\ott{\fp}(\dle{S}\cdot J^{n_1})\supsetneqq(\dle{R}\cdot I^{n_1+1})\ott{\fp}(\dle{S}\cdot J^{n_1+1})$. So $n_1\in\mc{B}^\bullet\ud{\mf{B}}(p^e)$. By applying the same approach to $m_1\in\mc{B}^\bullet\ud{J}(p^e)$, we can show that $m_1\in\mc{B}^\bullet\ud{\mf{B}}(p^e)$. Hence $\mc{B}^\bullet\ud{\mf{B}}(p^e)\supseteq\mc{B}^\bullet\ud{I}(p^e)\cup\mc{B}^\bullet\ud{J}(p^e)$.
	\end{proof}
	
	\bibliography{A3bio.bib}

\begin{thebibliography}{HNnBWZ16}

\bibitem[AMJNnB21]{Mon21}
Josep \`{A}lvarez Montaner, Jack Jeffries, and Luis N\'{u}\~{n}ez Betancourt.
\newblock Bernstein-{S}ato polynomials in commutative algebra.
\newblock In {\em Commutative algebra}, pages 1--76. Springer, Cham, [2021]
  \copyright 2021.

\bibitem[Ber72]{Ber72}
Joseph~N. Bern\v{s}te\u{i}n.
\newblock Analytic continuation of generalized functions with respect to a
  parameter.
\newblock {\em Funkcional. Anal. i Prilo\v zen.}, 6(4):26--40, 1972.

\bibitem[Bit18]{Bit18}
Thomas Bitoun.
\newblock On a theory of the {$b$}-function in positive characteristic.
\newblock {\em Selecta Math. (N.S.)}, 24(4):3501--3528, 2018.

\bibitem[BMS06]{Bur06}
Nero Budur, Mircea Musta\c{t}\v{a}, and Morihiko Saito.
\newblock Bernstein-{S}ato polynomials of arbitrary varieties.
\newblock {\em Compos. Math.}, 142(3):779--797, 2006.

\bibitem[BMS08]{BMS08}
Manuel Blickle, Mircea Musta\c{t}\v{a}, and Karen~E. Smith.
\newblock Discreteness and rationality of {$F$}-thresholds.
\newblock volume~57, pages 43--61. 2008.
\newblock Special volume in honor of Melvin Hochster.

\bibitem[Bud12]{Bud12}
Nero Budur.
\newblock Singularity invariants related to {M}ilnor fibers: survey.
\newblock In {\em Zeta functions in algebra and geometry}, volume 566 of {\em
  Contemp. Math.}, pages 161--187. Amer. Math. Soc., Providence, RI, 2012.

\bibitem[DSNnBP18]{Ste18}
Alessandro De~Stefani, Luis N\'u\~nez Betancourt, and Felipe P\'erez.
\newblock On the existence of {$F$}-thresholds and related limits.
\newblock {\em Trans. Amer. Math. Soc.}, 370(9):6629--6650, 2018.

\bibitem[ELSV04]{ELSV04}
Lawrence Ein, Robert Lazarsfeld, Karen~E. Smith, and Dror Varolin.
\newblock Jumping coefficients of multiplier ideals.
\newblock {\em Duke Math. J.}, 123(3):469--506, 2004.

\bibitem[HH90]{HH90}
Melvin Hochster and Craig Huneke.
\newblock Tight closure, invariant theory, and the {B}rian\c con-{S}koda
  theorem.
\newblock {\em J. Amer. Math. Soc.}, 3(1):31--116, 1990.

\bibitem[HNnBWZ16]{HZ16}
Daniel~J. Hern\'andez, Luis N\'u\~nez Betancourt, Emily~E. Witt, and Wenliang
  Zhang.
\newblock {$F$}-pure thresholds of homogeneous polynomials.
\newblock {\em Michigan Math. J.}, 65(1):57--87, 2016.

\bibitem[JNnBQG23]{JNQ23}
Jack Jeffries, Luis N\'{u}\~{n}ez Betancourt, and Eamon Quinlan-Gallego.
\newblock Bernstein-{S}ato theory for singular rings in positive
  characteristic.
\newblock {\em Trans. Amer. Math. Soc.}, 376(7):5123--5180, 2023.

\bibitem[Kas83]{Kas83}
Masaki Kashiwara.
\newblock Vanishing cycle sheaves and holonomic systems of differential
  equations.
\newblock In {\em Algebraic geometry ({T}okyo/{K}yoto, 1982)}, volume 1016 of
  {\em Lecture Notes in Math.}, pages 134--142. Springer, Berlin, 1983.

\bibitem[Kas77]{Kas76}
Masaki Kashiwara.
\newblock {$B$}-functions and holonomic systems. {R}ationality of roots of
  {$B$}-functions.
\newblock {\em Invent. Math.}, 38(1):33--53, 1976/77.

\bibitem[Kol97]{Kol97}
J\'{a}nos Koll\'{a}r.
\newblock Singularities of pairs.
\newblock In {\em Algebraic geometry---{S}anta {C}ruz 1995}, volume 62, Part 1
  of {\em Proc. Sympos. Pure Math.}, pages 221--287. Amer. Math. Soc.,
  Providence, RI, 1997.

\bibitem[Kun69]{Kun69}
Ernst Kunz.
\newblock Characterizations of regular local rings of characteristic {$p$}.
\newblock {\em Amer. J. Math.}, 91:772--784, 1969.

\bibitem[Laz04]{Laz04}
Robert Lazarsfeld.
\newblock {\em Positivity in algebraic geometry. {II}}, volume~49 of {\em
  Ergebnisse der Mathematik und ihrer Grenzgebiete. 3. Folge. A Series of
  Modern Surveys in Mathematics [Results in Mathematics and Related Areas. 3rd
  Series. A Series of Modern Surveys in Mathematics]}.
\newblock Springer-Verlag, Berlin, 2004.
\newblock Positivity for vector bundles, and multiplier ideals.

\bibitem[Lee25]{Lee25}
Jonghyun Lee.
\newblock Multiplicative {T}hom-{S}ebastiani for {B}ernstein-{S}ato
  polynomials.
\newblock {\em J. Algebra}, 664:247--260, 2025.

\bibitem[Mal83]{Mal83}
Bernard Malgrange.
\newblock Polyn\^omes de {B}ernstein-{S}ato et cohomologie \'evanescente.
\newblock In {\em Analysis and topology on singular spaces, {II}, {III}
  ({L}uminy, 1981)}, volume 101-102 of {\em Ast\'erisque}, pages 243--267. Soc.
  Math. France, Paris, 1983.

\bibitem[Mus09]{Mus09}
Mircea Musta\c{t}\u{a}.
\newblock Bernstein-{S}ato polynomials in positive characteristic.
\newblock {\em J. Algebra}, 321(1):128--151, 2009.

\bibitem[Pop14]{Popa}
Mihnea Popa.
\newblock D-modules in birational geometry, 2014.
\newblock Expanded Lecture Notes for Math 296.

\bibitem[QG21a]{QG21N}
Eamon Quinlan-Gallego.
\newblock Bernstein-{S}ato roots for monomial ideals in positive
  characteristic.
\newblock {\em Nagoya Math. J.}, 244:25--34, 2021.

\bibitem[QG21b]{Qge21}
Eamon Quinlan-Gallego.
\newblock Bernstein-{S}ato theory for arbitrary ideals in positive
  characteristic.
\newblock {\em Trans. Amer. Math. Soc.}, 374(3):1623--1660, 2021.

\bibitem[Qui21]{QuinPHD}
Eamon Quinlan.
\newblock {\em Bernstein-Sato Theory in Positive Characteristic}.
\newblock PhD thesis, 01 2021.

\bibitem[Rob00]{gtm198}
Alain~M. Robert.
\newblock {\em A course in {$p$}-adic analysis}, volume 198 of {\em Graduate
  Texts in Mathematics}.
\newblock Springer-Verlag, New York, 2000.

\bibitem[SS72]{SS72}
Mikio Sato and Takuro Shintani.
\newblock On zeta functions associated with prehomogeneous vector spaces.
\newblock {\em Proc. Nat. Acad. Sci. U.S.A.}, 69:1081--1082, 1972.

\bibitem[SZ24]{SZ24}
Quan Shi and Huaiqing Zuo.
\newblock On the tensor property of {B}ernstein-{S}ato polynomial, preprint,
  arxiv:2406.04121.
\newblock 2024.

\bibitem[Yek92]{Yek92}
Amnon Yekutieli.
\newblock An explicit construction of the {G}rothendieck residue complex.
\newblock {\em Ast\'erisque}, (208):127, 1992.
\newblock With an appendix by Pramathanath Sastry.

\end{thebibliography}
	\bibliographystyle{alpha}
	
	Department of Mathematical Sciences, Tsinghua University, Beijing, 100084, P. R. China.
	
	\textit{Email address}: \texttt{taosy22@mails.tsinghua.edu.cn}
	
	Department of Mathematical Sciences, Tsinghua University, Beijing, 100084, P. R. China.
	
	\textit{Email address}: \texttt{xiaozd21@mails.tsinghua.edu.cn}
	
	Department of Mathematical Sciences, Tsinghua University, Beijing, 100084, P. R. China.
	
	\textit{Email address}: \texttt{hqzuo@mail.tsinghua.edu.cn}
\end{document}